\newtheorem{Theorem}{Theorem}[section]
\newtheorem{Lemma}[Theorem]{Lemma}
\newtheorem{Remark}[Theorem]{Remark}
\newtheorem{Assumption}[Theorem]{Assumption}
\newcommand{\N}{\mathbb{N}}
\newcommand{\Z}{\mathbb{Z}}
\newcommand{\R}{\mathbb{R}}
\newcommand{\SP}{\mathbb{S}}
\newcommand{\NN}{\mathcal{N}}
\newcommand{\GG}{\mathcal{G}}
\newcommand{\II}{\mathcal{I}}
\newcommand{\HH}{\mathcal{H}}
\newcommand{\LL}{\mathcal{L}}
\renewcommand{\SS}{\mathcal{S}}
\newcommand{\E}{\mathbb{E}}
\renewcommand{\P}{\mathbb{P}}
\newcommand{\abs}[1]{\left| #1 \right|}
\newcommand{\norm}[2]{\left\| #1 \right\|_{#2}}
\newcommand{\del}{\partial}
\newcommand{\dx}[1][x]{\,\mathrm{d}#1}
\newcommand{\tT}{\mathrm{T}}
\newcommand{\e}{\mathrm{e}}
\newcommand{\imag}{\mathrm{i}}
\renewcommand\theta{\vartheta}
\DeclareMathOperator*{\argmin}{argmin}
\DeclareMathOperator{\conv}{conv}
\DeclareMathOperator{\tr}{tr}
\DeclareMathOperator{\rang}{rank}
\DeclareMathOperator{\lspan}{span}
\DeclareMathOperator{\Cov}{Cov}
\DeclareMathOperator{\SPD}{SPD}
\DeclareMathOperator{\Sym}{Sym}
\DeclareMathOperator{\modulo}{mod}
\begin{document}

\title{Multivariate Myriad Filters based on Parameter Estimation of the Student-$t$ Distribution}
\author{Friederike Laus\footnotemark[1]
\and
Gabriele Steidl\footnotemark[1] \footnotemark[2]}

\maketitle

\footnotetext[1]{Department of Mathematics,
Technische Universit\"at Kaiserslautern,
Paul-Ehrlich-Str.~31, D-67663 Kaiserslautern, Germany,
\{friederike.laus,steidl\}@mathematik.uni-kl.de.
} 
\footnotetext[2]{Fraunhofer ITWM, Fraunhofer-Platz 1,
D-67663 Kaiserslautern, Germany}

\begin{abstract}
The contribution of this study is twofold: 
First, we  propose an efficient algorithm for the computation  
of the (weighted) maximum likelihood estimators for the parameters of the multivariate Student-$t$ distribution, 
which we call generalized multivariate myriad filter. 
Second, we use the generalized multivariate myriad filter in a nonlocal framework for the denoising of images corrupted by different kinds of noise. 
The resulting method is very flexible and can handle heavy-tailed noise such as Cauchy noise, 
as well as the other extreme, namely Gaussian noise.
Furthermore, we detail how the limiting case $\nu \rightarrow 0$
of the  projected normal distribution in two dimensions
can be used for the robust denoising of periodic data, 
in particular for images with circular data  corrupted by wrapped Cauchy noise.
\end{abstract}

\section{Introduction} \label{sec:intro}
Besides mean and median filter, myriad filter form an important class of nonlinear filters, 
in particular in robust signal and image processing. 
While in a multivariate setting, the mean filter can be defined componentwise, 
the generalization of the median to higher dimensions is not canonically, but often the geometric median is used, see, e.g.,~\cite{SST12}.
In this paper we make a first attempt to establish a multivariate myriad filter based on the multivariate Student-$t$ distribution.  

In one dimension, mean, median as well as myriad filters can
be derived as   maximum likelihood (ML) estimators of the location parameter from a  Gaussian, Laplacian respective Cauchy distribution.
Concerning a higher dimensional myriad filter, instead of a multivariate Cauchy distribution 
we propose to start with the family of more general Student-$t$ distributions, 
which possesses an additional degree of freedom parameter $\nu > 0$ 
that allows to control the robustness of the resulting filter. 
While the Cauchy distribution is  obtained as the special case $\nu=1$, 
the Student-$t$ distribution converges for $\nu \to \infty$ 
to the normal distribution, so that in the limit also mean filters are covered.

The multivariate Student $t$-distribution is frequently used in statistics~\cite{KN04}, 
whereas the  multivariate Cauchy distribution is far less common and in contrast 
to the one-dimensional case usually not considered separately from the Student-$t$ distribution. 
The parameter(s) of a multivariate Student $t$-distribution are usually estimated 
via the Maximum Likelihood (ML) method 
in combination with the EM algorithm~\cite{Byrne2017,Ch00,CH08,DLR77,McLK1997}. 
The EM algorithm for the Student-$t$ distribution has been derived, e.g. in~\cite{LLT89}, 
For an overview of estimation methods for the multivariate Student $t$-distribution, 
in particular the EM algorithm and its variants, we refer to~\cite{NK08} and the references therein.

Recently, the Student-$t$ distribution and the closely related Student-$t$ mixture models (SMM) have found interesting applications 
in various image processing tasks. One of the first papers which suggested a variational approach for denoising of images corrupted by Cauchy noise was 
\cite{ALP2002}. In~\cite{LMSS2018}, the authors proposed a unified framework for images corrupted by white noise that can handle (range constrained) Cauchy noise as well. 
Other recent approaches that consider also the task of deblurring include~\cite{DHWMZ2019,YYG2018}. Concerning mixture models, in~\cite{VS14} it has been shown that Student-$t$ mixture models are superior 
to Gaussian mixture models for mode\-ling image patches and the authors proposed an application in image compression. 
Further applications include robust image segmentation~\cite{BM18,NW12,SNG07} as well as robust registration~\cite{GNL09,ZZDZC14}.
In both cases, the SMM  is estimated using the EM algorithm derived in~\cite{PM00}.

In this paper, we propose an application of the Student-$t$ distribution to robust denoising of images corrupted by different kinds of noise. 
The initial motivation for this work were the recent papers~\cite{LPS18,MDHY18,SDZ15} for Cauchy noise removal. 
In~\cite{MDHY18,SDZ15} the authors proposed
a variational method consisting of a data term 
that resembles the noise statistics and a total variation regularization term.
Based on a ML approach the authors of~\cite{LPS18} introduced 
a generalized  myriad filter which estimates both the location and the scale parameter of the Cauchy distribution. 
They used this filter in a nonlocal approach, 
where for each pixel of the image they chose as samples those pixels  having a similar neighborhood 
and replaced the initial pixel by its filtered version. Such a pixelwise treatment assumes the pixels of an image to be independent, 
which is in practice a rather unrealistic assumption; in fact, 
in natural images they are usually locally highly correlated. 
Taking the local dependence structure into account may improve the results of image restoration methods. 
 For instance, in case of denoising images corrupted by additive Gaussian noise  this led to the state-of-the-art algorithm of Lebrun et al.~\cite{LBM13},
who proposed to restore the image  patchwise  based on a maximum a posteriori approach.

In the Gaussian setting, their approach is equivalent to
minimum mean square error estimation, and more general, the resulting estimator can be
seen as a particular instance of a best linear unbiased estimator (BLUE). For denoising
images corrupted by additive Cauchy noise, a similar approach addressed in this paper requires to define a
multivariate myriad filter. In this paper, we derive a generalized multivariate myriad filter
(GMMF) based on ML estimation for the family of Student-t distributions of which the
Cauchy distribution forms a special case.
In the limiting case $\nu=0$ and $d=2$ dimensions this further  provides an algorithm for denoising   images corrupted by heavy-tailed noise
with image values on the circle $\mathbb S^1$,  such as phase-valued images appearing in inferometric synthetic aperture radar InSAR.

After finishing this paper, we became aware that our algorithm for estimating the parameters of the Student-$t$ distribution 
can be considered as Jacobi variant of a sophisticated version of the 
EM algorithm which was heuristically proposed by Kent et al. in \cite{KTV94} and analyzed by van Dyk in \cite{vanDyk1995}.
The approach in the present paper is different and does not require the EM framework with special hidden variables.

The paper is organized as follows: 
In Section~\ref{Sec:Student_t}, we introduce the Student-$t$ distribution and the projected normal distribution.
Their likelihood functions are given in Section~\ref{Sec:Student_t_ML}. 
Then, in Section~\ref{Sec:Student_t_estimators}, we recall existence and uniqueness of the (weighted) ML estimators for the location 
and the scatter parameter of the distribution, where we provide own proofs.
We propose an efficient algorithm for computing the ML estimates 
in Section~\ref{Sec:Algorithm} and 
prove its convergence.
In Section~\ref{Sec:Application}, we illustrate how the developed algorithm can be applied in the context of nonlocal (robust) image denoising
both for gray-value images and images with values in $\mathbb S^1$.
Conclusions  and directions of future research are addressed in Section \ref{sec:conclusions}.

\section{Student-$t$ and Projected Normal Distribution}\label{Sec:Student_t}

In this section, we introduce the multivariate Student-$t$ distribution and the related projected normal distribution
and collect some of their properties. 

\subsection{Multivariate Student-$t$ Distribution}
The probability density function (pdf) of the 
$d$-dimensional Student $t$-distribution $T_\nu(\mu,\Sigma)$ with $\nu>0$ degrees of freedom is given by
\begin{equation}\label{pdf}
f_\nu(x|\mu,\Sigma)  = 
\frac{\Gamma\left(\frac{d+\nu}{2}\right)}{\Gamma\left(\frac{\nu}{2}\right)\, (\pi \nu)^{\frac{d}{2}}
{\abs{\Sigma}}^{\frac{1}{2}}} \, \frac{1}{\left(1+\frac{1}{\nu}\delta \right)^{\frac{d+\nu}{2}}},
\end{equation}
where 
$$
\delta \coloneqq (x-\mu)^\tT \Sigma^{-1}(x-\mu)
$$
denotes the the \emph{Mahalanobis distance} 
between $x$ and a distribution with parameters $\mu,\Sigma$
and
$
\Gamma(s) \coloneqq\int_0^\infty t^{s-1}\e^{-t}\dx[t]
$
the \emph{Gamma function}.
The smaller the value of $\nu$ 
for fixed \emph{location}~$\mu$ and positive definite \emph{scatter matrix} $\Sigma$, 
the heavier are the tails of the $T_\nu(\mu,\Sigma)$ distribution.
Figure~\ref{Fig:different_nu} 
illustrates this behavior for the one-dimensional standard Student-$t$ distribution. 
For $\nu \to \infty$, 
we obtain since  
$$
\frac{
\Gamma(\frac{\nu + d}{2})}{ \Gamma(\frac{\nu}{2}) \left(\frac{\nu}{2}\right) ^{d/2} } \rightarrow 1, \qquad
(1+ \frac{1}{\nu} \delta ) ^{(\nu+d)/2} \rightarrow \frac12 \delta \quad \mathrm {as} \; \nu \rightarrow \infty
$$
see \cite{BC2017}, 
that 
$\lim_{\nu \rightarrow \infty}f_\nu(x|\mu,\Sigma) = {(2\pi)^{-\frac{d }{2}} \abs{\Sigma}^{-\frac{1}{2}} } \e^{-\frac12 \delta}$
so that
the Student-$t$ distribution $T_\nu(\mu,\Sigma)$ converges to the normal distribution $\NN(\mu,\Sigma)$.

\begin{figure}[thb]
\centering  
\centering  
{\includegraphics[width=0.4\textwidth]{./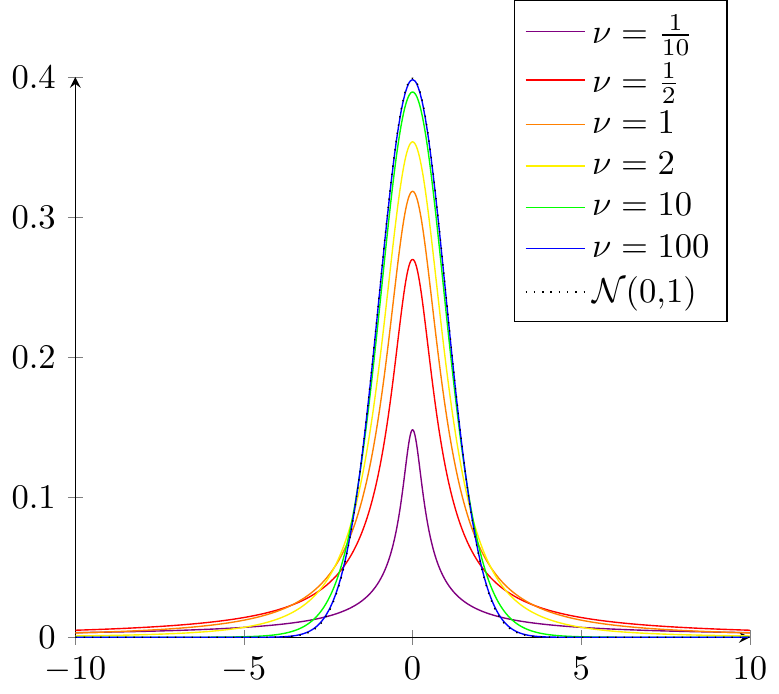}}
\caption{Standard Student-$t$ distribution $T_\nu(0,1)$ 
for different values of $\nu$ in comparison with the standard normal distribution $\NN(0,1)$.}\label{Fig:different_nu}
\end{figure}

The expectation of the Student-$t$ distribution is $\E(X) = \mu$ for $\nu > 1$ 
and the covariance matrix is given by $\Cov(X) =\frac{\nu }{\nu-2} \Sigma$ for $\nu > 2$, otherwise the quantities are undefined. 
As the normal distribution, the Student-$t$ distribution belongs to the class of \emph{elliptical distributions}.  
Some important properties that are needed later on are summarized in the next theorem~\cite{KN04}.
In the following, we denote by $\mathrm{Sym}(d)$ the space of symmetric $d \times d$ matrices and by 
$\SPD(d)$ the cone of symmetric positive definite matrices. 

\begin{Theorem}\label{Prop:Student_t}
\begin{enumerate}
	\item Let $\mu\in \R^d$ and $\Sigma\in \SPD(d)$. Further, let $Z\sim \NN(0,\Sigma)$ and $Y\sim \Gamma\left(\frac{\nu }{2},\frac{\nu }{2}\right)$ be independent, 
	where $\Gamma(\alpha,\beta)$ is the \emph{Gamma distribution} with parameters $\alpha,\beta>0$. 
	Then $X = \mu + \frac{Z}{\sqrt{Y}}~\sim T_\nu(\mu,\Sigma)$.
	\item Let $X\sim T_\nu(\mu,\Sigma)$, $A\in \R^{d\times d}$ be an invertible matrix and $b\in \R^d$. Then	$AX + b\sim T_\nu(A\mu + b, A\Sigma A^\tT)$. 
\end{enumerate}		
\end{Theorem}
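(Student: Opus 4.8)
The plan is to prove part (i) by recognizing the stochastic representation as a scale mixture of Gaussians and then marginalizing out $Y$. First I would condition on $Y=y$ and observe that $X = \mu + Z/\sqrt{y}$ is conditionally Gaussian with mean $\mu$ and covariance $\frac1y\Sigma$, so its conditional density is $(2\pi)^{-d/2}y^{d/2}\abs{\Sigma}^{-1/2}\e^{-\frac{y}{2}\delta}$, where $\delta=(x-\mu)^\tT\Sigma^{-1}(x-\mu)$ and I have used $\abs{\tfrac1y\Sigma}^{-1/2}=y^{d/2}\abs{\Sigma}^{-1/2}$. Multiplying by the Gamma density $\frac{(\nu/2)^{\nu/2}}{\Gamma(\nu/2)}\,y^{\nu/2-1}\e^{-\frac{\nu}{2}y}$ of $Y$ and integrating over $y\in(0,\infty)$ then produces the marginal density of $X$.

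The second step is the integration itself. After collecting the powers of $y$ and combining the two exponentials into $\e^{-\frac{y}{2}(\delta+\nu)}$, the integral takes the form $\int_0^\infty y^{\frac{d+\nu}{2}-1}\,\e^{-\frac{\delta+\nu}{2}y}\dx[y]$, a standard Gamma integral equal to $\Gamma\!\left(\frac{d+\nu}{2}\right)\!\left(\frac{\delta+\nu}{2}\right)^{-\frac{d+\nu}{2}}$. Substituting this back and bookkeeping the constants should collapse everything to exactly the density $f_\nu(x|\mu,\Sigma)$ in \eqref{pdf}. I expect this constant-tracking to be the only genuine obstacle: it is routine but error-prone, and the decisive move is the factorization $\left(\frac{\delta+\nu}{2}\right)^{-\frac{d+\nu}{2}} = \left(\frac{\nu}{2}\right)^{-\frac{d+\nu}{2}}\!\left(1+\frac{\delta}{\nu}\right)^{-\frac{d+\nu}{2}}$, after which the $\nu$-powers recombine as $(\nu/2)^{\nu/2}(\nu/2)^{-(d+\nu)/2}=(\nu/2)^{-d/2}$ and merge with $(2\pi)^{-d/2}$ into $(\pi\nu)^{-d/2}$, leaving the claimed form with the ratio $\Gamma\!\left(\frac{d+\nu}{2}\right)/\Gamma\!\left(\frac{\nu}{2}\right)$ in front.

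For part (ii) I would avoid a change-of-variables computation on the density and instead exploit the representation from part (i). Writing $X=\mu+Z/\sqrt{Y}$ with $Z\sim\NN(0,\Sigma)$ and $Y\sim\Gamma\!\left(\frac{\nu}{2},\frac{\nu}{2}\right)$ independent, the affine image satisfies $AX+b=(A\mu+b)+\frac{AZ}{\sqrt{Y}}$. Since $A$ is invertible, the linear-transformation property of the Gaussian gives $AZ\sim\NN(0,A\Sigma A^\tT)$ with $A\Sigma A^\tT\in\SPD(d)$, while $Y$ is unchanged and remains independent of $AZ$. Applying part (i) a second time with location $A\mu+b$ and scatter matrix $A\Sigma A^\tT$ then yields $AX+b\sim T_\nu(A\mu+b,A\Sigma A^\tT)$ directly. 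This reduces part (ii) to a one-line consequence of part (i) together with the standard behaviour of Gaussians under linear maps, so I anticipate no real difficulty there.
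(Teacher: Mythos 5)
Your proof is correct. Note, however, that the paper itself does not prove this theorem at all: it is stated as a collection of known properties with a citation to the literature (the reference [KN04] on the multivariate Student-$t$ distribution), so there is no in-paper argument to compare against. Your argument is the standard one that such references give: for (i), the scale-mixture representation (conditioning on $Y=y$ gives $X\,|\,Y=y \sim \NN(\mu,\tfrac1y\Sigma)$, then marginalizing via the Gamma integral $\int_0^\infty y^{\frac{d+\nu}{2}-1}\e^{-\frac{\delta+\nu}{2}y}\dx[y] = \Gamma\bigl(\tfrac{d+\nu}{2}\bigr)\bigl(\tfrac{\delta+\nu}{2}\bigr)^{-\frac{d+\nu}{2}}$), and your constant bookkeeping checks out, including the recombination $(2\pi)^{-d/2}(\nu/2)^{-d/2}=(\pi\nu)^{-d/2}$ that produces exactly \eqref{pdf}. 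For (ii), deducing the affine-transformation rule from the stochastic representation in (i) rather than by a Jacobian computation on the density is clean and complete: $AZ\sim\NN(0,A\Sigma A^\tT)$ with $A\Sigma A^\tT\in\SPD(d)$ by invertibility of $A$, and independence of $AZ$ and $Y$ is inherited. One small point worth making explicit: the statement's ``$\Gamma(\alpha,\beta)$'' must be read in the \emph{rate} parameterization (density proportional to $y^{\alpha-1}\e^{-\beta y}$), which is what your computation uses; with the scale parameterization the result would come out wrong by a factor in the exponent.
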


\subsection{Projected Normal Distribution}
For the limiting case $\nu \rightarrow 0$, the pdf $f_\nu$ in \eqref{pdf} converges pointwise to zero, i.e.
$$
\lim_{\nu \rightarrow 0} f_\nu(x|\mu,\Sigma)  
= 
\lim_{\nu \rightarrow 0}
\frac{1}{\Gamma \left(\frac{\nu}{2}\right)\, (1+ \frac{1}{\nu} \delta)^{ \frac{\nu}{2} }  } \;
\frac{ \Gamma\left(\frac{d}{2}\right)}{\pi^{\frac{d}{2}} \, \abs{\Sigma}^{\frac{1}{2}} \, \delta^{\frac{d}{2}}}
= 0
$$
However, replacing the first factor by $\frac12$ the surface measure 
$\omega_d =  2 \pi^{\frac{d}{2}}/\Gamma\left(\frac{d}{2}\right)$
of the sphere $\SP^{d-1} \subset \mathbb R^d$ comes into the play
and setting $\mu \coloneqq 0$,
we obtain the pdf 
\begin{equation} \label{pdf_projected_normal}
f_0(x|\Sigma) =
\frac{ \Gamma \left(\frac{d}{2}\right) }{ 2 \pi^\frac{d}{2} } \frac{1}{ \abs{\Sigma}^\frac{1}{2} \, \delta^\frac{d}{2} }
\end{equation}
of the \emph{projected normal distribution} $\Pi_\NN(0,\Sigma)$ on $\SP^{d-1}$ on $\SP^{d-1}$.
In the rest of this paper, we will refer to this setting as case $\nu = 0$.
More precisely, if  $X\sim \NN(0,\Sigma)$, then
$\frac{X}{\lVert X\rVert_2} \sim \Pi_\NN(0,\Sigma)$.
Note that for $X\sim \NN(\mu,\Sigma)$ with $\mu \not = 0$ we have
again $\frac{X-\mu}{\lVert X-\mu\rVert_2} \sim \Pi_\NN(0,\Sigma)$, 
but $\frac{X}{\lVert X\rVert_2} \sim \Pi_\NN(\mu,\Sigma)$
with a more sophisticated pdf, see, e.g., \cite{frahm2004}.
This distribution is also called 
\emph{angular Gaussian distribution}~\cite{HBW2017,MJ09,Watson1983},
\emph{off-set normal distribution}~\cite{Mardia1972} 
or 
\emph{displaced normal distribution}~\cite{Kendall1974}.
It is important to mention that $f_0(x|\Sigma) = f_0(x|\lambda \Sigma)$ for any $\lambda >0$, 
so that the positive definite matrix $\Sigma$ is only identifiable up to a positive factor. 

\paragraph{Wrapped Cauchy Distribution.}
For $d=2$, there is a relation of the projected normal distribution 
to the \emph{wrapped Cauchy distribution}~\cite{KT88,MJ09,WG2013,WG2014}
which we will use for our applications in  Section~\ref{Sec:Application}.
The density of the real-valued  Cauchy distribution $C(a,\gamma)$ is given by
\begin{equation*}
g(\theta|a,\gamma) = \frac{1}{\pi} \frac{\gamma}{\gamma^2 + (\theta-a)^2}, \qquad a\in \R, \; \gamma>0.
\end{equation*}
The wrapped Cauchy distribution $C(a,\gamma)$ is obtained by wrapping it around the circle, i.e.\ for $\theta \in [-\pi,\pi)$ we have
\begin{align*}
g_w(\theta|a,\gamma) 
&=  \sum_{k\in \Z} \frac{1}{\pi}\frac{\gamma}{\gamma^2 + (\theta + 2k\pi - a)^2}\\
& = 
\frac{1}{2\pi}\frac{1 -\rho^2}{1+\rho^2-2\rho\cos(\theta-a)} , \qquad a \in [-\pi,\pi),
\end{align*}
where $\rho = \e^{-\gamma}$ and the second formula follows by Poisson's summation formula
using that $\hat g(\omega) = \e^{-\gamma |\omega| + \imag a \omega}$ is the characteristic function of $g$.
We rewrite the density as 
\begin{align}
g_w(\theta|a,\gamma)
&= \frac{1}{2\pi}\frac{1}{\frac{1+\rho^2}{1 -\rho^2}-\frac{2\rho}{1 -\rho^2}\bigl(\cos(a)\cos(\theta) + \sin(a)\sin(\theta)\bigr)}\\
&= \frac{1}{2\pi} \frac{\sqrt{1-\xi_1^2-\xi_2^2}}{1-\xi_1\cos(\theta) - \xi_2\sin(\theta)}, \label{WC}
\end{align}
where $\xi_1 = \frac{2\rho}{1 +\rho^2}\cos(a)$ and $\xi_2 = \frac{2\rho}{1 +\rho^2}\sin(a)$.

\begin{Lemma}[Relation between Projected Normal and Wrapped Cauchy Distribution\label{Prop:wrapped_Cauchy}]\hfill 
\begin{enumerate}
	\item 	Let $d=2$ and $X \sim \Pi_\NN(0,\Sigma)$ with $ \Sigma = \begin{pmatrix}
	\sigma_{11} & \sigma_{12}\\
	\sigma_{12} & \sigma_{22}
	\end{pmatrix}$		
	be a random variable in $\mathbb{S}^1$ with parameterization 
		$X = \begin{pmatrix}
		\cos(\Phi)\\
		\sin(\Phi)
		\end{pmatrix}$. 		
		Then 
	$\Theta \coloneqq  (2\Phi) \modulo 2\pi \sim C_w(a,\rho)$ 
	with parameters 
	\begin{align}
	\rho &= \e^{-\gamma} = \left(\frac{\tr(\Sigma)-2\sqrt{|\Sigma|}}{\tr(\Sigma)+2\sqrt{|\Sigma|}}\right)^{\frac{1}{2}}, \label{rho}\\
	a &=  \begin{cases}
	-\pi                                                                         &\text{if} \; \sigma_{11} - \sigma_{22} = 0,\\
	\operatorname{arctan} \left(\frac{2 \sigma_{12}}{\sigma_{11} - \sigma_{22}} \right)       &\text{if }   \sigma_{11} - \sigma_{22} > 0,\\
	\operatorname{arctan} \left(\frac{2 \sigma_{12}}{\sigma_{11} - \sigma_{22}}\right) + \pi  &\text{if }   \sigma_{11} - \sigma_{22} < 0   \text{ and }   \sigma_{12} \ge 0,\\
	\operatorname{arctan} \left(\frac{2 \sigma_{12}}{\sigma_{11} - \sigma_{22}}\right) - \pi  &\text{if }   \sigma_{11} - \sigma_{22} < 0   \text{ and }   \sigma_{12} <0.
	\end{cases}
	  \label{a}
	\end{align}
	\item  Let $\Theta \sim C_w(a,\rho)$ and let $\Xi $ be a discrete random variable with $\P(\Xi= -1) =\P(\Xi =1) = \frac{1}{4}$, $\P(\Xi =0) = \frac{1}{2}$ that is independent from $\Theta$.
	Then		$		\Phi = \left(\frac{\Theta}{2}+\pi \Xi \right)\modulo 2\pi~\sim \Pi_\NN(0,\Sigma)	$,
	where (up to a positive factor)
	\begin{equation*}
	\Sigma = \begin{pmatrix}
	\frac{1}{2} + \frac{\rho}{1+\rho^2}\cos(a) & \frac{\rho}{1 + \rho^2} \sin(a)\\
	\frac{\rho}{1 + \rho^2} \sin(a) & \frac{1}{2} - \frac{\rho}{1+\rho^2}\cos(a) 
	\end{pmatrix}.
	\end{equation*}
\end{enumerate} 
\end{Lemma}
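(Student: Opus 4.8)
The plan is to prove both directions by explicitly computing the pushforward densities on the circle and matching them against the wrapped Cauchy density in the form \eqref{WC}; the doubling map $\Phi\mapsto 2\Phi$ is the geometric heart of the statement, and everything hinges on treating it correctly.

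For the first assertion I would start from the projected normal density \eqref{pdf_projected_normal} specialized to $d=2$, where $\Gamma(1)=1$, so the density of the angle $\Phi$ of $X$ is $f_\Phi(\phi)=\frac{1}{2\pi}\abs{\Sigma}^{-1/2}\delta^{-1}$ with $\delta=x^\tT\Sigma^{-1}x$ for $x=(\cos\phi,\sin\phi)^\tT$. Writing out $\Sigma^{-1}$ and applying the double-angle identities $\cos^2\phi=\tfrac12(1+\cos2\phi)$, $\sin^2\phi=\tfrac12(1-\cos2\phi)$, $2\cos\phi\sin\phi=\sin2\phi$, the quadratic form collapses to $\abs{\Sigma}\,\delta=\tfrac12\tr\Sigma-\tfrac12(\sigma_{11}-\sigma_{22})\cos2\phi-\sigma_{12}\sin2\phi$, an expression in $2\phi$ alone. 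This is exactly where the substitution $\Theta=2\Phi$ announces itself.

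Next I would push $f_\Phi$ forward through the two-to-one map $\phi\mapsto\theta=2\phi\modulo2\pi$. The two preimages of $\theta$ are $\theta/2$ and $\theta/2+\pi$, each carrying the Jacobian factor $\tfrac12$; since $\delta$ is invariant under $x\mapsto-x$, the projected normal is antipodally symmetric, whence $f_\Phi(\theta/2)=f_\Phi(\theta/2+\pi)$ and the two contributions combine to $f_\Theta(\theta)=f_\Phi(\theta/2)$. Substituting the expression above and factoring $\tfrac12\tr\Sigma$ out of the denominator yields $f_\Theta(\theta)=\frac{1}{2\pi}\frac{\abs{\Sigma}^{1/2}}{\frac12\tr\Sigma}(1-\xi_1\cos\theta-\xi_2\sin\theta)^{-1}$ with $\xi_1=(\sigma_{11}-\sigma_{22})/\tr\Sigma$ and $\xi_2=2\sigma_{12}/\tr\Sigma$. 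Comparison with \eqref{WC} demands the prefactor to equal $\sqrt{1-\xi_1^2-\xi_2^2}$, and this is precisely the determinant identity $(\tr\Sigma)^2-(\sigma_{11}-\sigma_{22})^2-4\sigma_{12}^2=4\abs{\Sigma}$, so that $1-\xi_1^2-\xi_2^2=4\abs{\Sigma}/(\tr\Sigma)^2$ and the match is exact. Reading $\rho$ and $a$ off from $\xi_1^2+\xi_2^2=(2\rho/(1+\rho^2))^2$ and $\tan a=\xi_2/\xi_1$, and using that $\rho\mapsto2\rho/(1+\rho^2)$ is a bijection of $(0,1)$, produces the stated formulas \eqref{rho}–\eqref{a}, the case distinction in \eqref{a} being the usual $\atan2$ quadrant bookkeeping for the sign of $\sigma_{11}-\sigma_{22}$ and $\sigma_{12}$.

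For the second assertion I would compute the density of $\Phi=(\Theta/2+\pi\Xi)\modulo2\pi$ by conditioning on $\Xi$. The density of $\Theta/2$ is $2g_w(2\phi)$ supported on $[-\pi/2,\pi/2)$, and the key observation is that $\Xi=1$ and $\Xi=-1$ yield the same $\Phi$ modulo $2\pi$; hence $\Phi$ equals $\Theta/2$ with probability $\tfrac12$ (from $\Xi=0$) and $\Theta/2+\pi$ with probability $\tfrac14+\tfrac14=\tfrac12$ (from $\Xi=\pm1$). Using the $2\pi$-periodicity of $g_w$, these contributions assemble, on either half of the circle, to $f_\Phi(\phi)=g_w(2\phi\modulo2\pi\mid a,\rho)$. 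By the first part this is exactly the $\Phi$-density of a projected normal, so it remains only to invert the map $\Sigma\mapsto(a,\rho)$: normalizing $\tr\Sigma=1$ and solving $\sigma_{11}-\sigma_{22}=\xi_1$, $2\sigma_{12}=\xi_2$ reproduces the stated $\Sigma$, consistent with the fact that $\Sigma$ is only determined up to a positive scalar. The main obstacle throughout is the careful treatment of this doubling map — getting the Jacobian and the two-preimage sum right in the first part, and dually verifying that the three-point randomization of $\Xi$ with weights $\tfrac14,\tfrac12,\tfrac14$ exactly reconstitutes the antipodal symmetry lost under $\Phi\mapsto2\Phi$; the double-angle rewriting and the determinant identity are then routine.
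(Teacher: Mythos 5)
Your proof is correct and takes essentially the same route as the paper's: in part (i) the double-angle reduction of $x^\tT\Sigma^{-1}x$, the identity $f_\Theta(\theta)=f_\Phi(\theta/2)$ from antipodal symmetry, and the matching of the prefactor with $\sqrt{1-\xi_1^2-\xi_2^2}$ via the determinant identity, followed by reading off $\rho$ and $a$; in part (ii) conditioning on $\Xi$ and inverting the parameter map under the normalization $\tr\Sigma=1$. Your observation that $\Xi=1$ and $\Xi=-1$ produce the same value of $\Phi$ modulo $2\pi$ is a cleaner packaging of the indicator-function bookkeeping the paper carries out explicitly, but the underlying argument is identical.
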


The proof of this Lemma can be found in the appendix.

Observe that the wrapped Cauchy distribution is unimodular, 
whereas the projected normal distribution is antipodally symmetric, which  causes the $\modulo$ operation. 
A similar relation as in statement (i) of the lemma can be found, e.g., in \cite{MJ09} without proof.

\section{Weighted Likelihood Functions}\label{Sec:Student_t_ML}
%
In this section, we provide weighted log-likelihood functions for the Student-$t$ and projected normal distributions
together with the equations characterizing their critical points.

\paragraph{Student-$t$ distribution.} Let $\nu > 0$. For $x_i \in \mathbb R^d$, $i=1,\ldots,n$, 
the likelihood function of the the Student-$t$ distribution is given by
\begin{equation*}
\LL(\mu,\Sigma|x_1,\ldots,x_n)
= \frac{\Gamma\left(\frac{d+\nu}{2}\right)^n}{\Gamma\left(\frac{\nu}{2}\right)^n(\pi \nu)^{\frac{nd}{2}}\abs{\Sigma}^{\frac{n}{2}} } 
\prod_{i=1}^n \frac{1}{\bigl(1+\frac{1}{\nu}\delta_i\bigr)^{\frac{d+\nu}{2}}}
\end{equation*}
and the log-likelihood function by
\begin{align}
\ell(\mu,\Sigma|x_1,\ldots,x_n)
=& \, n \, \log\left(\Gamma\left(	\frac{d+\nu}{2}\right)\right) - n \log \left(\Gamma\left(\frac{\nu}{2}\right)\right)-\frac{nd}{2}\log(\pi\nu) \\
&- \frac{n}{2}\log \abs{\Sigma} - \frac{d+\nu}{2} \sum_{i=1}^n \log\left(1+\frac{1}{\nu}\delta_i \right),
\end{align}
where 
$$\delta_i \coloneqq  (x_i-\mu)^\tT \Sigma^{-1} (x_i-\mu).$$
Ignoring constants, maximizing $\ell$ is equivalent to minimizing the negative, weighted function
\begin{align} \label{L}
L(\mu,\Sigma)& \coloneqq (d+\nu) \sum_{i=1}^n w_i \log(\nu + \delta_i )+ \log \abs{\Sigma}
\end{align}
for uniform weights $w_i = \frac{1}{n}$. 
Note that we   allow for different weightings of the summands by introducing weights in the open probability simplex
\begin{equation*}
\mathring \Delta_n \coloneqq  \left\{w = (w_1,\ldots,w_n) \in \mathbb R_{>0}^n: \sum_{i=1}^n w_i = 1 \right\}.
\end{equation*}
Using the relations 
\begin{equation*}
\frac{\partial \log(\abs{X})}{\partial X} = X^{-1},\qquad
\frac{\partial a^\tT X^{-1}b }{\partial X} =- {(X^{-\tT})}a b^\tT {(X^{-\tT})},
\end{equation*}
see~\cite{PP08}, the derivatives of $L$ with respect to $\mu$ and $\Sigma$ are given by
\begin{align*}
\frac{\partial L}{\partial \mu}(\mu,\Sigma) 
& = -2(d+\nu )\sum_{i=1}^n w_i \frac{ \Sigma^{-1}(x_i-\mu)}{\nu + \delta_i},\\
\frac{\partial L}{\partial \Sigma}(\mu,\Sigma)	
& = - (d+\nu ) \sum_{i=1}^n w_i \frac{ \Sigma^{-1}(x_i-\mu)(x_i-\mu)^\tT \Sigma^{-1} }{\nu + \delta_i}+\Sigma^{-1}.
\end{align*}
Setting them to zero results in the equations
\begin{align}
0 &= \sum_{i=1}^n w_i \frac{x_i-\mu}{\nu+\delta_i},\label{mult_cond_a}\\
I &=	(d+\nu)\sum_{i=1}^n w_i \frac{\Sigma^{-\frac{1}{2}}(x_i-\mu)(x_i-\mu)^\tT {\Sigma^{-\frac{1}{2}}} }{\nu+\delta_i} \label{mult_cond_S}
\end{align}
characterizing the \emph{critical points} of $L$.
Computing the trace of both sides of~\eqref{mult_cond_S} and using the linearity and permutation invariance of the trace operator, we obtain
\begin{align}
d& = \tr(I) 
=
(d+\nu)\sum_{i=1}^n w_i \frac{\tr\bigl(\Sigma^{-\frac{1}{2}}(x_i-\mu)(x_i-\mu)^\tT {\Sigma^{-\frac{1}{2}}}\bigr)}{\nu+\delta_i}  
 = (d+\nu)\sum_{i=1}^n w_i \frac{\delta_i}{\nu+\delta_i},
\end{align}
which yields after division by $\nu > 0$ the relation
\begin{equation}
	1= (d+\nu)	\sum_{i=1}^n w_i \frac{1}{\nu+\delta_i}\label{trace_1}.
\end{equation}

\paragraph{Projected normal distribution.}
Let $\nu = 0$ and $d\ge 2$.
Similarly as above, we obtain for $x_i \in \SP^{d-1}$, $i=1,\ldots,n$, 
the negative, weighted likelihood function of the $\Pi_\NN(0,\Sigma)$ distribution as
\begin{equation} \label{tyler}
L_0(\Sigma) \coloneqq d \sum_{i=1}^n w_i \log(\delta_i )+ \log \abs{\Sigma}.
\end{equation}
The critical points of $L_0$ are given by the solution of
\begin{equation} \label{tyler_1}
I =	d \sum_{i=1}^n w_i \frac{\Sigma^{-\frac{1}{2}} x_i x_i^\tT {\Sigma^{-\frac{1}{2}}} }{\delta_i}.
\end{equation}
Note that again $L_0(\lambda \Sigma) = L_0(\Sigma)$, $\lambda > 0$ and if $\Sigma$ fulfills \eqref{tyler_1} then also
$\lambda \Sigma$ does.
From the statistical point of view, \eqref{tyler} is only a likelihood function for samples on  $\SP^{d-1}$.
However, later we will also consider the function for arbitrary nonzero points $x_i \in \mathbb R^d$.

\section{Weighted Maximum Likelihood Estimators}\label{Sec:Student_t_estimators}
%
In this section, we are interested in the minimizers of $L$ and $L_0$.
We prove 
\begin{itemize}
\item for $\nu > 0$ and fixed $\mu$ that $L$ has a unique critical point $\Sigma \in \SPD(d)$ and this point is a minimizer;
\item for $\nu \ge 1$ that $L$ has a unique critical point $(\mu,\Sigma) \in \mathbb R^d \times \SPD(d)$ 
and this point is a minimizer;
\item for $\nu =0$ ($\mu = 0$) that $L_0$  has a unique critical point $\Sigma \in \SPD(d)$ with $\tr \Sigma = 1$ and this point is a minimizer.
 Moreover, all critical points are given by $\lambda \Sigma$, $\lambda > 0$.
\end{itemize}
Under several assumptions, these results are known even in the more general context of $M$-estimator for ellipical distributions.
In~\cite{Mar76}, Maronna established sufficient conditions for the existence and uniqueness of a joint minimizer 
for uniform weights and $M$-estimators whose cost function fulfills certain properties. 
More results can be found in~\cite{KT91}. 
For the projected normal distribution,  the claims were proved under certain assumptions
by Tyler in~\cite{tyler1987a}, see also \cite{duembgen1998,DT2005,tyler1987b}.
For an overview we refer to the survey paper of Dümbgen et al.~\cite{DPS2015}.

In this paper, we incorporate different weights into the log-likelihood function which allows for multiple samples
and is moreover useful in the nonlocal denoising approach in Section~\ref{Sec:Application}.
We give direct existence and uniqueness proofs  in order to make the paper self-contained.

\subsection{Estimation of Scatter}
First, we consider the estimation of  the scatter matrix $\Sigma$ only. We start with the case $\nu>0$, 
where the location parameter~$\mu$ is known. 
If $\mu \neq 0$, 
we might transform the samples to $y_i = x_i -\mu$, $i=1,\ldots,n$, 
so that we can assume w.l.o.g.\ $\mu=0$ and
use the notation $L(\Sigma)=L(0,\Sigma)$.

We make the following assumption on the samples $x_1,\ldots,x_n\in \R^d$ and weights $w \in \mathring \Delta_n$:
\begin{Assumption}\label{Ass:lin_ind} 
\begin{enumerate}
	\item Any subset of $ \le d$ samples  $x_i$, $i \in \{1,\ldots,n\}$ is linearly independent.
	\item $(d-1)w_{\text{max}}<\frac{\nu + d-1}{ \nu +d }$, where $w_{\text{max}} \coloneqq \max\{w_i:i=1,\ldots,n\}$.
\end{enumerate}
\end{Assumption}
The linear independence assumption (i) holds $\lambda^d$-a.s.\ when sampling from a continuous distribution. 
The interpretation behind the constraints is that the mass of the (empirical) distribution determined by $x_1,\ldots,x_n$ is  
not allowed to be concentrated in lower dimensional subspaces, which would cause the resulting distribution to be degenerated.
\begin{Lemma}\label{Lem:bounded_weights}
Let $x_i\in\R^d$, , $i=1,\ldots,n$ and $w \in \mathring \Delta_n$ fulfill Assumption~\ref{Ass:lin_ind}. 
Further, let $V\subset \R^d$ be a linear subspace with $0\leq \dim(V)\leq d-1$ 
and
$\II_V \coloneqq \bigl\{i\in \{1,\ldots,n\}\colon x_i\in V\bigr\}$. Then it holds 
\begin{equation}
	\sum_{i\in \II_V} w_i < \frac{\nu + \dim(V)}{ \nu + d}\label{weights_subspace}
\end{equation}
and $n \ge d$.
\end{Lemma}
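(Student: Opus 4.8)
The plan is to reduce the weight estimate to a purely combinatorial cardinality bound on $\II_V$ combined with the single scalar constraint in Assumption~\ref{Ass:lin_ind}(ii), and then to recover $n \ge d$ as a corollary by applying the bound to the span of all samples.

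First I would set $k \coloneqq \dim(V)$ and show $|\II_V| \le k$. Indeed, if $\II_V$ contained $k+1$ indices, the corresponding samples would form a set of $k+1 \le d$ vectors, hence linearly independent by Assumption~\ref{Ass:lin_ind}(i); but they all lie in the $k$-dimensional subspace $V$, which is impossible. In the boundary case $k=0$ we have $V = \{0\}$, and since every single sample is already linearly independent by Assumption~\ref{Ass:lin_ind}(i), no $x_i$ vanishes, so $\II_V = \emptyset$. In all cases this yields $\sum_{i\in\II_V} w_i \le |\II_V|\, w_{\text{max}} \le k\, w_{\text{max}}$.

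Next I would convert Assumption~\ref{Ass:lin_ind}(ii), which only constrains the value $k = d-1$, into the bound for every $k \in \{0,\ldots,d-1\}$. Consider the affine function $f(k) \coloneqq \frac{\nu + k}{\nu + d} - k\, w_{\text{max}}$. It satisfies $f(0) = \frac{\nu}{\nu+d} > 0$ and $f(d-1) = \frac{\nu+d-1}{\nu+d} - (d-1) w_{\text{max}} > 0$, the latter being exactly Assumption~\ref{Ass:lin_ind}(ii). Since $f$ is affine, its minimum over $[0,d-1]$ is attained at an endpoint, so $f(k) > 0$ throughout this range. Combining with the previous step gives $\sum_{i\in\II_V} w_i \le k\, w_{\text{max}} < \frac{\nu+k}{\nu+d}$, which is precisely~\eqref{weights_subspace}.

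Finally, for $n \ge d$ I would argue by contradiction: if $n \le d-1$, take $V \coloneqq \lspan\{x_1,\ldots,x_n\}$, so that $\dim(V) \le n \le d-1$ and $\II_V = \{1,\ldots,n\}$. Then~\eqref{weights_subspace} forces $1 = \sum_{i=1}^n w_i < \frac{\nu + \dim(V)}{\nu+d} \le \frac{\nu+d-1}{\nu+d} < 1$, a contradiction. I expect the only genuinely delicate point to be the interpolation step: Assumption~\ref{Ass:lin_ind}(ii) is a single inequality at the top dimension $k=d-1$, and one must recognize that the linearity of $f$ in $k$ (equivalently, that $\nu > 0$ makes $\frac{k(\nu+d-1)}{d-1} \le \nu + k$ for $k \le d-1$) is exactly what propagates the constraint to all lower-dimensional subspaces. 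The cardinality bound and the deduction of $n \ge d$ are then routine bookkeeping.
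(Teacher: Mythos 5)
Your proof is correct and follows essentially the same route as the paper's: bound $|\II_V|$ by $\dim(V)$ via Assumption~\ref{Ass:lin_ind}(i), estimate the weight sum by $k\,w_{\text{max}}$, propagate the constraint of Assumption~\ref{Ass:lin_ind}(ii) from $k=d-1$ to all smaller $k$ (your affine endpoint argument is just a repackaging of the paper's direct inequality $k\frac{\nu+d-1}{(d-1)(\nu+d)} \le \frac{\nu+k}{\nu+d}$, as you yourself note), and obtain $n\ge d$ by the identical span-of-all-samples contradiction.
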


\begin{proof}
If $V=\{0\}$, we have $\II_V = \emptyset$, so that the statement holds true.
Next, let $d \ge 2$ and $1 \le k=\dim(V)\leq d-1$. By Assumption \ref{Ass:lin_ind} (i), it holds $|\II_V|\leq k$. 
Using that by assumption $w_{\text{max}}<\frac{\nu + d-1}{(d-1)(\nu +d)}$,   we obtain
\begin{equation*}
	\sum_{i\in \II_V} w_i <k\frac{\nu + d-1}{(d-1)(\nu +d)} = \frac{\frac{k}{d-1}\nu + k}{\nu + d}\leq \frac{\nu + k}{\nu + d}=\frac{\nu + \dim(V)}{\nu+d}.
\end{equation*}
Assume that $n<d$ and let $V= \lspan\{x_1,\ldots,x_n\}$. 
Then $\dim(V)\leq d-1$, and we obtain the contradiction
\begin{equation*}
1 = \sum_{i\in \II_V} w_i \leq \frac{\nu + \dim(V)}{\nu + d}\leq \frac{\nu + d-1}{d + \nu}<1. \qedhere
\end{equation*}
\end{proof}

Next, we show the existence of a minimizer of $L$. 
Since $\SPD(d)$ is an open cone, any minimizer of $L$ is also critical point of $L$.

\begin{Theorem}[Existence of Scatter, $\nu > 0$]\label{Theo:scatter_existence}
Let $x_i\in\R^d$, $i=1,\ldots,n$ and $w \in \mathring \Delta_n$ fulfill Assumption~\ref{Ass:lin_ind}. 
Then it holds 
\begin{equation*}
	\argmin_{\Sigma\in \SPD(d)} L(\Sigma)\neq \emptyset.
\end{equation*}
\end{Theorem}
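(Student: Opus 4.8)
The plan is to establish existence by a coercivity argument. Since $\SPD(d)$ is an open cone in $\Sym(d)$ and $L$ is continuous there, it suffices to show that $L(\Sigma)\to+\infty$ whenever $\Sigma$ leaves every compact subset of $\SPD(d)$, i.e.\ whenever the smallest eigenvalue $\lambda_{\min}(\Sigma)\to 0$ or the largest eigenvalue $\lambda_{\max}(\Sigma)\to+\infty$. Coercivity makes every nonempty sublevel set $\{\Sigma\in\SPD(d):L(\Sigma)\le c\}$ a compact subset of $\SPD(d)$, so that the continuous function $L$ attains its infimum there and $\argmin L\neq\emptyset$.

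The two basic estimates I would use come from the eigendecomposition $\Sigma=\sum_{j=1}^d\lambda_j u_j u_j^\tT$ with $\lambda_1\ge\cdots\ge\lambda_d>0$ and orthonormal $u_j$: namely $\log\abs{\Sigma}=\sum_{j=1}^d\log\lambda_j$ and $\delta_i=\sum_{j=1}^d (u_j^\tT x_i)^2/\lambda_j$, whence $\nu+\delta_i\ge (u_j^\tT x_i)^2/\lambda_j$ for every $i,j$. First I would dispose of the pure scaling direction: a direct computation gives $L(\lambda\Sigma)=(d+\nu)\sum_i w_i\log(\nu+\lambda^{-1}\delta_i)+d\log\lambda+\log\abs{\Sigma}$, and since every $x_i\neq 0$ by Assumption~\ref{Ass:lin_ind}(i) we have $\delta_i>0$; letting $\lambda\to 0$ produces net growth of order $-\nu\log\lambda\to+\infty$, while $\lambda\to\infty$ keeps the sum bounded and forces $d\log\lambda\to+\infty$. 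For a general approach to the boundary, the only term that can drive $L$ to $-\infty$ is $\sum_j\log\lambda_j$, through the eigenvalues $\lambda_j\to 0$; these must be compensated by the logarithmic blow-up of $\log(\nu+\delta_i)$ for those samples having a nonzero component along the corresponding shrinking eigendirections $u_j$.

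The heart of the argument is to match these two effects for a general sequence $\Sigma_k$ approaching the boundary. After passing to a subsequence so that the eigenvectors converge and the eigenvalues converge in $[0,\infty]$, I would group the indices $j$ according to the asymptotic rate at which $\lambda_j\to 0$ and, for each group, consider the subspace $V$ spanned by the eigenvectors $u_j$ whose eigenvalues shrink at least as fast as the group's rate. A sample $x_i\notin V$ then feeds growth of order $-\log\lambda_j$ into $(d+\nu)w_i\log(\nu+\delta_i)$, whereas the samples lying in $V$ contribute nothing towards compensating $\sum_{u_j\in V}\log\lambda_j$; the total weight of the latter is controlled by Lemma~\ref{Lem:bounded_weights}, namely $\sum_{i\in\II_V}w_i<\frac{\nu+\dim V}{\nu+d}$. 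I expect the main obstacle to be the bookkeeping showing that this threshold $\frac{\nu+\dim V}{\nu+d}$ is \emph{exactly} the borderline at which the $(d+\nu)$-weighted compensation strictly dominates the decay $\sum_{u_j\in V}\log\lambda_j$, uniformly over all degeneration modes and over the relative rates within a group of comparable eigenvalues. Assembling these group-wise estimates into the conclusion $L(\Sigma_k)\to+\infty$ is the delicate step, and it is precisely the linear-independence Assumption~\ref{Ass:lin_ind}(i)—guaranteeing that no proper subspace carries too many samples—that keeps the strict inequality of Lemma~\ref{Lem:bounded_weights} available throughout.
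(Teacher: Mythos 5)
Your overall strategy coincides with the paper's: coercivity of $L$ on $\SPD(d)$, eigendecomposition along a degenerating sequence, passing to a subsequence so that the eigenvectors converge, compensation of the decay of $\log\abs{\Sigma_r}$ by the blow-up of the terms $\log(\nu+\delta_{i,r})$, and Lemma~\ref{Lem:bounded_weights} as the source of the decisive strict inequality. The genuine gap is that the step you yourself defer as ``the delicate step'' \emph{is} the proof. Concretely: write $\lambda_{1r}\ge\dots\ge\lambda_{dr}$ for the eigenvalues, let $p$ be the number of eigenvalues bounded away from zero, let $e_j$ be the limit eigenvectors, and set $S_k=\lspan\{e_1,\dots,e_k\}$, $W_k=S_k\setminus S_{k-1}$, $I_k=\{i\colon x_i\in W_k\}$. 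The compensation available to a sample $x_i\in W_k$, $k\ge p+1$, is $-\log\lambda_{rk}+O(1)$ (the rate of its \emph{deepest} nonvanishing limiting component, not a uniform rate), so that for all sufficiently large $r$,
\begin{equation*}
L(\Sigma_r)\;\ge\;\sum_{k=p+1}^{d}\Bigl((d+\nu)\sum_{i\in I_k}w_i-1\Bigr)\bigl(-\log\lambda_{rk}\bigr)+C,
\end{equation*}
and one must show that the right-hand side tends to $+\infty$ even though individual coefficients can be negative (e.g.\ when $I_k=\emptyset$). This is exactly what the paper's backward induction, inequality \eqref{lower_bound_lambda_inv}, accomplishes: it uses the ordering $\lambda_{r,k+1}\le\lambda_{rk}<1$ to push deficits down to level $p+1$, where Lemma~\ref{Lem:bounded_weights} applied to $S_p$ makes the accumulated coefficient of $\log(\lambda_{r,p+1})$ strictly negative, whence the sum explodes. (Equivalently one can use Abel summation together with the observation that every tail sum $\sum_{k=m}^d\bigl((d+\nu)\sum_{i\in I_k}w_i-1\bigr)$ is strictly positive, which is again Lemma~\ref{Lem:bounded_weights}, now applied to $S_{m-1}$.) Nothing in your proposal substitutes for this bookkeeping, so existence is not established.

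Two further points would need repair even at the sketch level. First, your identification of the relevant subspace is inverted: you take $V$ to be the span of the eigendirections shrinking \emph{at least as fast} as a given rate and claim that samples lying in $V$ contribute nothing towards compensation, bounding their weight by Lemma~\ref{Lem:bounded_weights}. In fact a nonzero sample in such a $V$ satisfies $\delta_{i,r}\ge \lVert x_i\rVert_2^2/\lambda_{rk}\to\infty$, so these samples are the \emph{best} compensators; the samples that cannot compensate $\log\lambda_{rk}$ are those lying in the span $S_{k-1}$ of the slower and non-shrinking directions, and it is to these subspaces that the weight lemma must be applied. Second, your basic bound $\nu+\delta_i\ge(u_j^\tT x_i)^2/\lambda_j$ involves the rotating eigenvectors of $\Sigma_r$ and degenerates when the inner products $u_j^\tT x_i$ tend to zero along the sequence; one needs the paper's liminf estimate \eqref{bounded_below_ratio}, formulated with the limit vectors $e_k$ and the partition into the sets $W_k$, to guarantee genuine growth of order $-\log\lambda_{rk}$. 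With the inversion corrected and the induction (or Abel summation) carried out, your sketch becomes the paper's proof.
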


\begin{proof}
Let $\{\Sigma_r\}_{r\in \N}\subseteq \SPD(d)$ be a sequence in $\SPD(d)$, where  
$\lambda_{1r}\geq\ldots\geq \lambda_{dr}>0$ 
denote the eigenvalues of $\Sigma_r$ and  $e_{1r},\ldots,e_{dr}$ the  corresponding orthonormal eigenvectors.
We prove that $L(\Sigma_r)\overset{r\to \infty}{\to} +\infty$ if one of the following situations is met: 
\begin{enumerate}
	\item[(i)] $\lambda_{1r}\overset{r\to \infty}{\to} +\infty$ and $\lambda_{dr}\geq c>0$ for all $r\in \N$,
	\item[(ii)] $\lambda_{dr}\overset{r\to \infty}{\to} 0$.
\end{enumerate}
Then $L$ attains its minimum in $\SPD(d)$, and since $L$ is continuously differentiable, it is necessarily a critical point. 
By definition of $L$ we have
\begin{align*}
L(\Sigma_r)  = (d + \nu)\sum_{i=1}^{n} w_i \log\left(\nu +  x_i^\tT \Sigma_r^{-1} x_i\right) +  \sum_{j=1}^d \log(\lambda_{rj}).
\end{align*}
In case (i), the first sum is bounded below, while the second one tends to infinity, so that $L(\Sigma_r)\overset{r\to \infty}{\to} +\infty$.

In case (ii), let $0\leq p\leq d-1$ such that 
$\lambda_{1r}\geq \ldots\geq \lambda_{rp} \geq c>0$ for all $r\in \N$ 
and 
$\lambda_{r p+1}\geq \ldots\geq \lambda_{dr}\overset{r\to \infty}{\rightarrow}0$. If $p=0$, then all $\lambda_{rj}$ tend to zero. 
Since the sphere $\SP^{d-1}$ is compact, 
there exist  subsequences (w.l.o.g.\ again denoted by $e_{rj}$) 
such that 
$\lim\limits_{r\to \infty} e_{rj}=e_j \in \SP^{d-1}$ for $j=1,\ldots,d$. 
Set $S_0 = \{0\}$, and for $k=1,\ldots,d$ further
$S_k \coloneqq \lspan\{e_1,\ldots,e_k\}$ and
\begin{equation*}
W_k \coloneqq S_k\setminus S_{k-1} = \bigl\{y\in \R^d\colon \langle y,e_k\rangle \neq 0,\, \langle y,e_l\rangle = 0 \text{ for }l=k+1,\ldots,d\bigr\}.
\end{equation*}
By Assumption~\ref{Ass:lin_ind}(i) we know that $\dim(S_k) = k$.
Let 
\begin{align*}
\widetilde{I}_k \coloneqq \bigl\{i\in \{1,\ldots,n\}\colon x_i \in S_k\bigr\}\qquad\text{and}\qquad	I_k \coloneqq \bigl\{i\in \{1,\ldots,n\}\colon x_i \in W_k\bigr\}.
\end{align*}
Using $S_k = W_k \dot\cup S_{k-1} $, we obtain $\tilde{I}_k = I_k \dot \cup \tilde{I}_{k-1}$ for $k=1,\ldots,d$. 
According to  Assumption~\ref{Ass:lin_ind}(i) it holds $|I_k|\leq |\tilde{I}_k|\leq \dim(S_k)= k$ for $k=1,\ldots,d-1$.
Introducing the functions
\begin{equation*}
L_{j}(\Sigma_r) = (d+\nu)\sum_{i\in {I}_j} w_i \log\left(\nu + x_i^\tT \Sigma^{-1}_r x_i\right) + \log(\lambda_{rj}),
\end{equation*}
we can split
\begin{align*}
L(\Sigma_r) = \sum_{j=1}^p L_j(\Sigma_r)+\sum_{j=p+1}^d L_j(\Sigma_r).
\end{align*}
By assumption the first sum remains bounded from below as $r \rightarrow \infty$ so that we have to consider the second one.
We have
\begin{align}
&\sum_{j=p+1}^d L_{j}(\Sigma_r) 
= \sum_{j=p+1}^d (d+\nu)\sum_{i\in {I}_j} w_i \log\left(\nu +x_i^\tT \Sigma^{-1}_r x_i\right) + \log(\lambda_{rj})\\
&=	 
\sum_{j=p+1}^d \biggl( (d+\nu)\sum_{i\in {I}_j} w_i \log\big(\lambda_{rj} (\nu + x_i^\tT \Sigma^{-1}_r x_i) \big) 
+ \big(1-(d+\nu)\sum_{i\in {I}_j}w_i\big) \log(\lambda_{rj})\Big). \label{doch}
\end{align}
Since
\begin{equation*}
y^\tT \Sigma^{-1}_r y = \sum_{j=1}^d \frac{1}{\lambda_{rj}} \langle y,e_{rj}\rangle^2   \geq \frac{1}{\lambda_{rk}}\langle y,e_{rk}\rangle^2,
\end{equation*}
and 
$\langle y,e_{rk}\rangle \overset{r\to \infty}{\rightarrow} \langle y,e_k\rangle \neq 0
$ for $y\in W_k$,
we obtain for all $r$ sufficiently large
\begin{equation}
\liminf_{r\to\infty} \lambda_{rk} \, y^\tT \Sigma^{-1}_r y \geq \langle y,e_{k}\rangle ^2 >0.\label{bounded_below_ratio}
\end{equation}
This implies that the first summand in \eqref{doch} is bounded from below. 
Concerning the second summand in \eqref{doch} we prove for sufficiently large $r$ by induction for
$k \ge p+1$ that

\begin{equation}
\sum_{j=k}^d \biggl( 1- (d+\nu)\sum_{i\in {I}_j}w_i\biggr) \log(\lambda_{rj})
\geq 
\biggl((d+\nu)\sum_{i\in \tilde{I}_{k-1}} w_i - ( \nu + k-1)\biggr) \log(\lambda_{rk}).\label{lower_bound_lambda_inv}
\end{equation}

Then, for $k=p+1$, we conclude by Lemma~\ref{Lem:bounded_weights} that the factor of $\log(\lambda_{rp+1})$ on the right-hand side
is negative, so that the whole sum tends to $+\infty$ as $r \rightarrow \infty$.

Based on the relation $\tilde{I}_k = I_k\cup \tilde{I}_{k-1}$ we write
\begin{equation}
\sum_{i\in \tilde{I}_k} w_i -\sum_{i\in {I}_k} w_i  =	\sum_{i\in \tilde{I}_{k-1}} w_i.\label{weight_relation}
\end{equation}
For the induction basis $k=d$, since $\lambda_{dr} < 1$ for sufficiently large $r$, we have  to show
\begin{equation*}
1- (d+\nu)\sum_{i\in {I}_d}w_i  \leq  (d+\nu)\sum_{i \in \tilde{I}_{d-1}}w_i - (\nu + d-1).
\end{equation*}
This follows directly from~\eqref{weight_relation}, since
\begin{align*}
	1- (d+\nu)\sum_{i\in {I}_d}w_i 
	& = 1- (d+\nu)\biggl( \sum_{i\in \tilde{I}_{d}} w_i- \sum_{i\in \tilde{I}_{d-1}} w_i\biggr) 
 = 1- (d+\nu)\biggl( 1-\sum_{i\in \tilde{I}_{d-1}} w_i  \biggr)\\
	& =   (d+\nu)\sum_{i \in \tilde{I}_{d-1}}w_i - (\nu + d-1).
\end{align*}
Now, assume that~\eqref{lower_bound_lambda_inv} holds for some $k+1$ with $d\geq k+1>p+1$, i.e.,
\begin{equation*}
\sum_{j={k+1}}^d \biggl(1 -  (d+\nu)\sum_{i\in {I}_j} w_i\biggr)\log(\lambda_{rj} )
\ge 
(d+\nu) \biggl(  \sum_{i\in \tilde{I}_{k}} w_i - \frac{ \nu + k}{d+\nu}\biggr)\log(\lambda_{r{k+1}}).
\end{equation*}
Then we get
\begin{align*}
	&\sum_{j={k}}^d \biggl(1 - (d+\nu)\sum_{i\in {I}_j} w_i\biggr) \log(\lambda_{rj})\\
	&=\sum_{j={k+1}}^d \biggl(1- (d+\nu)\sum_{i\in {I}_j}w_i \biggr) \log(\lambda_{rj}) 
                + \biggl(1- (d+\nu)\sum_{i\in {I}_k}w_i\biggr) \log(\lambda_{rk})\\
	& \geq 
(d+\nu) \biggl(  \sum_{i\in \tilde{I}_{k}} w_i - \frac{ \nu + k}{d+\nu}\biggr) \log(\lambda_{r{k+1}}) 
+ \biggl(1- (d+\nu)\sum_{i\in {I}_k}w_i \biggr) \log(\lambda_{r k})
\end{align*}		                  
and by Lemma~\ref{Lem:bounded_weights} and since $\lambda_{r k+1} \leq \lambda_{rk} < 1$ finally	
\begin{align*}
&\geq 
(d+\nu) \biggl(  \sum_{i\in \tilde{I}_{k}} w_i - \frac{ \nu + k}{d+\nu}\biggr) \log(\lambda_{r{k}}) 
+ \biggl(1- (d+\nu)\sum_{i\in {I}_k}w_i \biggr) \log(\lambda_{rk})\\
& = \biggl( (d+\nu) \sum_{i\in \tilde{I}_{k-1}} w_i - ( \nu + k-1)\biggr) \log(\lambda_{rk}).
\end{align*}
This finishes the proof.
\end{proof}

The end of the proof of Theorem~\ref{Theo:scatter_existence} reveals that the condition on the weights stated in~\eqref{weights_subspace} 
is sufficient for the existence of a minimizer. 
The next lemma shows that the non strong inequality is necessary for the existence of a  critical point.

\begin{Lemma}\label{Lem:Ass_lin_neccessary}
Let $x_i\in\R^d$,  $i=1,\ldots,n$ fulfill Assumption~\ref{Ass:lin_ind} (i) and  assume there exists a critical point of $L$. 
Then, for all linear subspaces $V\subset \R^d$ with $0\leq \dim(V)\leq d-1$ it holds
\begin{equation*}
\sum_{i\in \II_V} w_i \leq  \frac{\nu + \dim(V)}{d + \nu},
\end{equation*}
where $\II_V = \bigl\{i\in \{1,\ldots,n\}\colon x_i\in V\bigr\}$.
\end{Lemma}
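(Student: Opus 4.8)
The plan is to read off the claimed bound from the critical point equation~\eqref{mult_cond_S} (with $\mu=0$, as assumed throughout this subsection) combined with the trace identity~\eqref{trace_1}, after localizing both to the subspace $V$. Let $\Sigma\in\SPD(d)$ be a critical point, set $k\coloneqq\dim(V)$ and write $y_i\coloneqq\Sigma^{-\frac12}x_i$, so that $\delta_i = y_i^\tT y_i$ and~\eqref{mult_cond_S} reads $I=(d+\nu)\sum_{i=1}^n w_i\,\frac{y_iy_i^\tT}{\nu+\delta_i}$. The central idea is to test this matrix identity against the orthogonal projection $P$ onto the transformed subspace $W\coloneqq\Sigma^{-\frac12}V$. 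This $W$ has $\dim(W)=k$ and satisfies the crucial bijection property $x_i\in V \iff y_i\in W$, so that the projection will isolate exactly the samples indexed by $\II_V$.

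First I would multiply~\eqref{mult_cond_S} by $P$ and take the trace. Using $\tr(PI)=\tr(P)=k$ and $\tr(Py_iy_i^\tT)=\|Py_i\|_2^2$ (because $P$ is a symmetric idempotent), this gives $k=(d+\nu)\sum_{i=1}^n w_i\,\frac{\|Py_i\|_2^2}{\nu+\delta_i}$. For every $i\in\II_V$ one has $y_i\in W$, hence $Py_i=y_i$ and $\|Py_i\|_2^2=\delta_i$; the remaining summands are nonnegative. Discarding the latter yields the one-sided estimate $k\ge(d+\nu)\sum_{i\in\II_V}w_i\,\frac{\delta_i}{\nu+\delta_i}$.

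Next I would turn this into a bound on $\sum_{i\in\II_V}w_i$ by the elementary identity $\frac{\delta_i}{\nu+\delta_i}=1-\frac{\nu}{\nu+\delta_i}$, which rewrites the right-hand side as $(d+\nu)\sum_{i\in\II_V}w_i-(d+\nu)\nu\sum_{i\in\II_V}\frac{w_i}{\nu+\delta_i}$. The subtracted term is exactly what the global trace relation~\eqref{trace_1} controls: since every $\frac{w_i}{\nu+\delta_i}$ is positive, $\sum_{i\in\II_V}\frac{w_i}{\nu+\delta_i}\le\sum_{i=1}^n\frac{w_i}{\nu+\delta_i}=\frac{1}{d+\nu}$, so $(d+\nu)\nu\sum_{i\in\II_V}\frac{w_i}{\nu+\delta_i}\le\nu$. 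Combining the two displays gives $k\ge(d+\nu)\sum_{i\in\II_V}w_i-\nu$, which rearranges precisely to $\sum_{i\in\II_V}w_i\le\frac{\nu+\dim(V)}{d+\nu}$. The degenerate case $V=\{0\}$ is immediate, since Assumption~\ref{Ass:lin_ind}(i) forces every $x_i\neq0$ and hence $\II_V=\emptyset$.

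I do not expect a genuine obstacle in this argument; the only point requiring care is the bookkeeping that $\Sigma^{-\frac12}$ maps $V$ bijectively onto the equidimensional subspace $W$ and carries exactly the samples lying in $V$ into $W$, so that the projected trace isolates the weights $w_i$, $i\in\II_V$. The real conceptual content is recognizing that the \emph{local} projected trace inequality must be paired with the \emph{global} trace identity~\eqref{trace_1} in order to cancel the unwanted $\frac{\nu}{\nu+\delta_i}$ contributions; this pairing is what upgrades the weighted bound into the clean, $\delta_i$-free inequality asserted in the lemma.
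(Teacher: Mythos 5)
Your proof is correct: the reduction to whitened coordinates $y_i=\Sigma^{-1/2}x_i$ with $W=\Sigma^{-1/2}V$, the projected trace identity $k=(d+\nu)\sum_{i=1}^n w_i\,\|Py_i\|_2^2/(\nu+\delta_i)$, the discarding of the nonnegative $\II_V^c$-terms, and the use of \eqref{trace_1} to bound the $\frac{\nu}{\nu+\delta_i}$ corrections all check out, and the case $V=\{0\}$ is indeed trivial. It is essentially the paper's own argument in complementary form: the paper (after assuming w.l.o.g.\ $\Sigma=I$) multiplies the critical point equation \eqref{mult_cond_S} by $I-P$, the projector onto $V^\perp$, so that the $\II_V$-terms vanish and the remaining terms are bounded via $x_i^\tT(I-P)x_i\leq x_i^\tT x_i$ and $\frac{\delta_i}{\nu+\delta_i}\leq 1$, whereas you multiply by $P$ and keep only the $\II_V$-terms, compensating with the global trace identity \eqref{trace_1}; since \eqref{trace_1} is itself the full trace of \eqref{mult_cond_S}, the two computations differ exactly by subtraction from that identity and discard the very same nonnegative quantities, so they are one and the same estimate organized dually.
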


\begin{proof}
Condition \eqref{mult_cond_S} with $\mu = 0$ can be alternatively written as
\begin{equation}\label{7a}
I =	(d+\nu)\sum_{i=1}^n w_i \frac{ R^{-1} x_i \, x_i^\tT R^{-\tT} }{\nu+\delta_i} 
\end{equation}
with the Cholesky decomposition $\Sigma = R R^\tT$.
W.l.o.g.\ we might assume that $\Sigma = I$ is the critical point, 
otherwise we can transform the samples to $y_i = R^{-1}x_i$, where $RR^\tT = \Sigma$.	
The idea of the proof is to project the samples onto the orthogonal complement of $V$.
More precisely, let $k=\dim(V) < d$ and choose an orthonormal basis $v_1,\ldots,v_k$ of $V$. 
Set 
$W = (v_1,\ldots,v_k)$ so that $P = WW^\tT$ is the orthogonal projection onto $V$.
Now, for $\Sigma = I$ and $\mu = 0$, equation~\eqref{mult_cond_S} reads as
\begin{equation*}
I = (d+\nu)\sum_{i=1}^n w_i \frac{x_i x_i^\tT}{\nu+x_i^\tT x_i}.
\end{equation*}
Multiplying both sides with $I-P$ and taking the trace, yields
\begin{equation*}
	d - k = (d+\nu)\sum_{i=1}^n w_i \frac{ x_i^\tT(I-P)x_i}{\nu+x_i^\tT x_i}. 
\end{equation*}
We split the sum into a sum over $i\in \II_V$ and $i\in \II_V^c .= \{1,\ldots,n\}\setminus \II_V$ and get
\begin{align*}
d - k &= (d+\nu)\sum_{i\in \II_V}w_i \frac{ x_i^\tT(I-P)x_i}{\nu+x_i^\tT x_i}+(d+\nu)\sum_{i\in \II_V^c} w_i\frac{ x_i^\tT(I-P)x_i}{\nu+x_i^\tT x_i}\\
&=(d+\nu)\sum_{i\in \II_V^c} w_i\frac{ x_i^\tT(I-P)x_i}{\nu+x_i^\tT x_i}. 
\end{align*}
With $x_i^\tT(I-P)x_i\leq x_i^\tT x_i$ we obtain further
\begin{align*}
d - k 
&=(d+\nu)\sum_{i\in \II_V^c} w_i\frac{ x_i^\tT(I-P)x_i}{\nu+x_i^\tT x_i}
\leq (d+\nu)\sum_{i\in \II_V^c} w_i\frac{ x_i^\tT x_i}{\nu+x_i^\tT x_i}
\leq  (d+\nu)\sum_{i\in \II_V^c} w_i\\	
& = d + \nu -(d+\nu) \sum_{i\in \II_V} w_i.
\end{align*}
Rearranging yields
\begin{equation*}
	(d+\nu) \sum_{i\in \II_V} w_i\leq \nu + k
\end{equation*}
and finally
\begin{equation*}
\sum_{i\in \II_V} w_i \leq  \frac{\nu + k}{d + \nu}=\frac{\nu + \dim(V)}{d + \nu}.
\end{equation*}\qedhere
\end{proof}

Now, we turn to the question whether $L$ has a unique critical point. 
If this is the case, then clearly the unique critical point of $L$ is the minimizer of $L$.

\begin{Theorem}[Uniqueness of Scatter, $\nu > 0$]\label{Theo:scatter_uniqueII}
Let $x_i\in\R^d$, $i=1,\ldots,n$ and $w \in \mathring \Delta_n$ fulfill Assumption~\ref{Ass:lin_ind}. 
Then $L$ has a unique critical point.
\end{Theorem}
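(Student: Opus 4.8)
The plan is to exploit the geodesic convexity of $L$ on the cone $\SPD(d)$ equipped with its affine-invariant structure. Suppose, for contradiction, that $\Sigma_0\neq \Sigma_1$ are two critical points, and join them by the geodesic $\Sigma_t \coloneqq \Sigma_0^{1/2}\bigl(\Sigma_0^{-1/2}\Sigma_1\Sigma_0^{-1/2}\bigr)^t\Sigma_0^{1/2}$, $t\in[0,1]$. After replacing the samples $x_i$ by $\Sigma_0^{-1/2}x_i$ — which leaves both the Mahalanobis terms $\delta_i$ and Assumption~\ref{Ass:lin_ind} unchanged and alters $L$ only by an additive constant — I may assume $\Sigma_0 = I$, so that $\Sigma_t = \e^{tH}$ for the symmetric matrix $H \coloneqq \log\Sigma_1 \neq 0$. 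A further orthogonal change of coordinates (which preserves $\abs{\Sigma_t}$ exactly and keeps Assumption~\ref{Ass:lin_ind} intact) lets me take $H = \diag(\mu_1,\dots,\mu_d)$. I set $\phi(t)\coloneqq L(\Sigma_t)$ and aim to show that $\phi$ is convex, but in a way strict enough to be violated in the present situation.

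For the convexity I observe that $\log\abs{\Sigma_t} = t\,\tr(H)$ is affine, while $\delta_i(t) = x_i^\tT \e^{-tH} x_i = \sum_{j=1}^d x_{ij}^2\,\e^{-t\mu_j}$ is a nonnegative combination of log-affine functions, hence log-convex. Writing $h(s)\coloneqq \log(\nu+\e^s)$, one checks $0<h'<1$ and $h''>0$, so $h$ is convex and increasing; consequently each term $\log(\nu+\delta_i(t)) = h\bigl(\log\delta_i(t)\bigr)$ is convex, and $\phi$ is convex as a nonnegative combination of convex terms plus an affine term.

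Since $\Sigma_0,\Sigma_1$ are critical points, the full differential of $L$ vanishes there, so $\phi'(0)=\phi'(1)=0$; a convex function whose derivative vanishes at both endpoints is constant, whence $\phi''\equiv 0$ on $[0,1]$. As $\phi'' = (d+\nu)\sum_i w_i\,\bigl(\log(\nu+\delta_i)\bigr)''$ is a nonnegative combination, every summand must vanish identically. Here the decisive computation is that $\bigl(\log(\nu+\delta_i)\bigr)''$ splits into a term proportional to $\delta_i^2(\log\delta_i)''\ge 0$ (the log-convexity of $\delta_i$, i.e.\ Cauchy--Schwarz) and a nonnegative term proportional to $\nu\,\delta_i''$; the latter forces $\delta_i''\equiv 0$, i.e.\ $x_{ij}=0$ whenever $\mu_j\neq 0$. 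In other words every $x_i$ lies in $V_0\coloneqq\ker H$, a subspace of dimension $\le d-1$ because $H\neq 0$. But then $\sum_{i\in \II_{V_0}} w_i = 1$, contradicting the strict bound $\sum_{i\in\II_{V_0}}w_i < \frac{\nu+\dim V_0}{\nu+d}\le\frac{\nu+d-1}{\nu+d}<1$ of Lemma~\ref{Lem:bounded_weights}. Hence $L$ has at most one critical point, which together with Theorem~\ref{Theo:scatter_existence} yields exactly one.

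The main obstacle is this final rigidity step: bare geodesic convexity only shows that the set of critical points is geodesically connected, so one must extract the strict inequality in precisely the directions that matter. The role of the degrees-of-freedom parameter $\nu>0$ is crucial — it is exactly the extra term proportional to $\nu\,\delta_i''$ that breaks the scale invariance present in the $\nu=0$ case and pins down $x_i\in\ker H$, after which Assumption~\ref{Ass:lin_ind}, through Lemma~\ref{Lem:bounded_weights}, closes the argument.
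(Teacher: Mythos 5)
Your proof is correct, and it takes a genuinely different route from the paper's. After the same normalization (first critical point equal to $I$), the paper argues by eigenvalue comparison in the Loewner order: if the largest eigenvalue $\lambda_1$ of the second critical point $\Sigma_2$ satisfied $\lambda_1>1$, inserting the Courant--Fischer bound $x_i^\tT \Sigma_2^{-1} x_i \ge \lambda_1^{-1} x_i^\tT x_i$ into the fixed-point equation \eqref{cond_Sigma_short} gives the strict matrix inequality $\Sigma_2 \prec \lambda_1 I$ (strictness uses $\nu>0$ and the fact that the samples span $\R^d$ by Assumption~\ref{Ass:lin_ind}), contradicting that $\lambda_1$ is an eigenvalue of $\Sigma_2$; symmetrically $\lambda_d \ge 1$, hence $\Sigma_2=I$. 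You instead establish geodesic convexity of $L$ along $\Sigma_t=\e^{tH}$ and exploit rigidity in the equality case: the term $\nu\,\delta_i''$ forces all samples into $\ker H$, and Lemma~\ref{Lem:bounded_weights} yields the contradiction --- a lemma the paper's uniqueness proof never invokes. What each approach buys: the paper's argument is shorter and purely linear-algebraic, while yours exposes the structural reason for uniqueness --- $L$ is geodesically convex on $\SPD(d)$, so critical points are automatically global minimizers (a fact the paper instead deduces by combining uniqueness with Theorem~\ref{Theo:scatter_existence}) --- and it isolates exactly how $\nu>0$ breaks the scale invariance responsible for uniqueness failing up to a positive factor in the case $\nu=0$ of Theorem~\ref{Prop:scatter_nonunique}. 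Your route also connects the theorem to the theory of geodesically convex scatter estimation and generalizes to other costs with the same composition structure, at the price of the extra rigidity analysis that bare convexity alone does not supply.
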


\begin{proof}
Let $\Sigma_1$ and $\Sigma_2$ fulfill~\eqref{7a}.  
W.l.o.g. we can assume that $\Sigma_1=I$, since otherwise we  can use the Cholesky factorization 
$\Sigma_1 = R_1 R_1^\tT$,  transform the data to $y_i = R_1^{-1}x_i$ and replace $\Sigma_2$ by $R_1^{-1} \Sigma_2 R_1^{-\tT}$.

Let $\lambda_1 \ge \ldots \ge \lambda_d$ denote the eigenvalues of $\Sigma_2$.
We show that $\lambda_1 \le 1$ and $\lambda_d \ge 1$ which implies $I = \Sigma_2$.
Assume that $\lambda_1 > 1$.
By~\eqref{7a} it holds
\begin{align}
\Sigma_2 & = (d+\nu ) \sum_{i=1}^n w_i \frac{x_i x_i^\tT}{\nu +  x_i^\tT \Sigma_2^{-1}x_i}.\label{cond_Sigma_short}
\end{align}
Using the Courant-Fisher min-max principle we have
\begin{equation*}
x_i^\tT \Sigma_2^{-1}x_i \geq	{\lambda_1^{-1}}x_i^\tT x_i,
\end{equation*}
and with $\lambda_1>1$ we can  estimate
\begin{align*}
\frac{d+\nu}{\nu + x_i^\tT \Sigma_2^{-1}x_i} \leq \frac{d+\nu}{\nu + \lambda_1^{-1}x_i^\tT x_i}	 
= 
\lambda_1\frac{d+\nu}{\lambda_1 \nu +x_i^\tT x_i} <\lambda_1\frac{d+\nu}{{\nu} +x_i^\tT x_i} , \qquad i=1,\ldots,n.
\end{align*}	 
Inserting this in~\eqref{cond_Sigma_short} and regarding Assumption~\ref{Ass:lin_ind}(i) yields the contradiction
\begin{equation*}
\Sigma_2   = (d+\nu ) \sum_{i=1}^n w_i \frac{x_i x_i^\tT}{\nu +  x_i^\tT \Sigma_2^{-1}x_i} 
\prec \lambda_1 (d+\nu) \sum_{i=1}^n w_i \frac{x_i x_i^\tT}{\nu +  x_i^\tT  x_i}= \lambda_1 I,
\end{equation*}
where $A \prec B$ means that $B-A$ is positive definite.
Similarly we can show that $\lambda_d\geq 1$ and we are done.
\end{proof}

Now we turn to the case $\nu = 0$, i.e. to the projected normal distribution and the function $L_0$ in \eqref{tyler}.

\begin{Theorem}[Existence of Scatter, $\nu = 0$]\label{Theo:scatter_unique_0}
Let $d \ge 2$ and $x_i\in \mathbb R^d$, $i=1,\ldots,n$ fulfill Assumption~\ref{Ass:lin_ind} (i).
Let $w \in \mathring \Delta_n$ satisfy $w_{\max} < 1/d$.
Then, it holds 
\begin{equation*}
\argmin_{\Sigma\in \SPD(d)} L_0(\Sigma)\neq \emptyset.
\end{equation*}
\end{Theorem}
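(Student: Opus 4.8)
The plan is to exploit the scale invariance $L_0(\lambda\Sigma) = L_0(\Sigma)$ for all $\lambda > 0$, already noted after~\eqref{tyler_1}. This invariance is exactly what prevents $L_0$ from being coercive on all of $\SPD(d)$: the scaling $\Sigma \mapsto \lambda\Sigma$ drives the largest eigenvalue to $+\infty$ without changing the value of $L_0$, so the blow-up of case (i) in the proof of Theorem~\ref{Theo:scatter_existence} can no longer occur and must instead be factored out by a normalization. I would therefore restrict to the slice $\mathcal{S} \coloneqq \{\Sigma \in \SPD(d) : \lambda_{\max}(\Sigma) = 1\}$. Every ray $\{\lambda\Sigma : \lambda > 0\}$ meets $\mathcal{S}$ in exactly one point, so $\inf_{\SPD(d)} L_0 = \inf_{\mathcal{S}} L_0$, and it suffices to show that the infimum over $\mathcal{S}$ is attained. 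On $\mathcal{S}$ all eigenvalues lie in $(0,1]$, so the only possible degeneration is $\lambda_{\min}\to 0$.

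Next I would pick a minimizing sequence $\{\Sigma_r\} \subset \mathcal{S}$ with eigenvalues $1 = \lambda_{1r} \ge \cdots \ge \lambda_{dr} > 0$ and orthonormal eigenvectors $e_{1r},\ldots,e_{dr}$. Since the eigenvalues are bounded and the eigenframes live on the compact sphere $\SP^{d-1}$, after passing to a subsequence I may assume $\lambda_{jr} \to \lambda_j \in [0,1]$ and $e_{jr}\to e_j$. If $\lambda_{dr}$ stays bounded away from $0$, the limit $\Sigma^\ast$ lies in $\SPD(d)\cap\mathcal{S}$ and continuity of $L_0$ gives $L_0(\Sigma^\ast) = \inf_{\mathcal{S}} L_0$, so the minimum is attained. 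The remaining case is $\lambda_{dr}\to 0$, where I aim to derive a contradiction by showing $L_0(\Sigma_r)\to +\infty$.

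For this I would mirror the bookkeeping of the proof of Theorem~\ref{Theo:scatter_existence}(ii) with $\nu = 0$: set $S_k = \lspan\{e_1,\ldots,e_k\}$, $W_k = S_k\setminus S_{k-1}$, $\tilde I_k = \{i : x_i\in S_k\}$, $I_k = \{i : x_i \in W_k\}$, choose $p$ with $\lambda_1=\cdots=\lambda_p>0$ and $\lambda_{p+1}=\cdots=\lambda_d=0$, and split $L_0 = \sum_{j} L_{0,j}$ with $L_{0,j}(\Sigma_r) = d\sum_{i\in I_j} w_i \log(x_i^\tT\Sigma_r^{-1}x_i) + \log\lambda_{rj}$. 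The estimate~\eqref{bounded_below_ratio}, whose proof does not involve $\nu$, still gives $\liminf_r \lambda_{rk}\,x_i^\tT\Sigma_r^{-1}x_i \ge \langle x_i,e_k\rangle^2 > 0$ for $i\in I_k$, so each term $d\sum_{i\in I_j}w_i\log(\lambda_{rj}\,x_i^\tT\Sigma_r^{-1}x_i)$ is bounded below; since on $\mathcal{S}$ one also has $x_i^\tT\Sigma_r^{-1}x_i \ge \lVert x_i\rVert^2 > 0$, the block $\sum_{j\le p} L_{0,j}$ is bounded below as well. The entire divergence is thus concentrated in the $\nu = 0$ analogue of~\eqref{lower_bound_lambda_inv}, which I would prove by the same downward induction from $k=d$ to $k=p+1$, the only change being that the coefficient of $\log\lambda_{rk}$ on the right-hand side becomes $d\sum_{i\in\tilde I_{k-1}}w_i - (k-1)$.

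The crucial — and essentially the only genuinely new — point is the sign of the dominant coefficient at $k=p+1$, namely $d\sum_{i\in\tilde I_p}w_i - p$. Because the normalization fixes $\lambda_1 = 1$, we always have $p\ge 1$; and since $\dim S_p = p \le d-1 < d$, Assumption~\ref{Ass:lin_ind}(i) gives $|\tilde I_p|\le p$, whence $\sum_{i\in\tilde I_p}w_i \le p\,w_{\max} < p/d$ by the hypothesis $w_{\max}<1/d$. Thus the coefficient is strictly negative while $\log\lambda_{r,p+1}\to -\infty$, forcing $L_0(\Sigma_r)\to+\infty$ and contradicting minimality. I expect the main obstacle to be organizational rather than computational: correctly isolating the one-dimensional scale degree of freedom by the normalization so that $p\ge 1$ is guaranteed. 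This is precisely what replaces, in the $\nu=0$ regime, the role played by Lemma~\ref{Lem:bounded_weights} and the condition on $w_{\max}$ for $\nu>0$; once it is in place, the coercivity argument transfers essentially mechanically from Theorem~\ref{Theo:scatter_existence}.
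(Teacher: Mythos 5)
Your proposal is correct and takes essentially the same route as the paper: both factor out the scale invariance $L_0(\lambda\Sigma)=L_0(\Sigma)$ by a normalization (you fix $\lambda_{\max}(\Sigma)=1$, the paper fixes $\tr(\Sigma)=d$) and then reuse the decomposition, the bound \eqref{bounded_below_ratio}, and the downward induction \eqref{lower_bound_lambda_inv} from Theorem~\ref{Theo:scatter_existence}(ii) with $\nu=0$, the normalization forcing $p\ge 1$ and Assumption~\ref{Ass:lin_ind}(i) together with $w_{\max}<1/d$ yielding the strictly negative dominant coefficient. The particular choice of slice is immaterial, so this is the paper's argument in all essentials.
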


Note that for $x_i \in \SP^{d-1}$, $i=1,\ldots,n$, the Assumption~\ref{Ass:lin_ind} i) is fulfilled 
if the points are pairwise distinct and not antipodal.
Further, we see that $w \in \mathring \Delta_n$ and $w_{\max} < 1/d$ imply $n > d$.

\begin{proof}
By definition of $L_0$ we can restrict  the domain of $L_0$ to the bounded set
\begin{equation*}
D \coloneqq \left\{ \Sigma \in \SPD(d)\colon \tr(\Sigma) = d\right\}\subset \Sym(d).
\end{equation*}	
Let $\{\Sigma_r\}_{r\in \N}\subseteq D$ be a sequence with $\lim\limits_{r\to \infty}\Sigma_r = \Sigma\in \del D$. 
We will show that $L_0(\Sigma_r)\overset{r\to \infty}{\rightarrow}\infty$. 
Let  $\lambda_{1r}\geq\ldots\geq \lambda_{dr}>0$ denote the eigenvalues of $\Sigma_r$. 
Further, let   $\lambda_1\geq \ldots\geq \lambda_p>0$, $\lambda_{p+1}=\ldots=\lambda_d = 0$ be the eigenvalues of $\Sigma$, 
where $p = \rang(\Sigma)<d$. 	Note that  $p\geq 1$ since $\tr(\Sigma) = d$. 
Now, the same argumentation as in the proof of case (ii) of Theorem~\ref{Theo:scatter_existence} yields
\begin{align*}
L_0(\Sigma_r)& =\sum_{j=1}^d L_j(\Sigma_r) = \sum_{j=1}^p L_j(\Sigma_r)+\sum_{j=p+1}^d L_j(\Sigma_r)\\
& \geq 
\sum_{j=1}^p L_j(\Sigma_r) + d \sum_{j=p+1}^d \sum_{i \in I_j }w_i \log(\lambda_{rj} x_i^\tT \Sigma_r^{-1} x_i)
+ \biggl(d\sum_{i\in \tilde{I}_{p}} w_i - p \biggr) \log(\lambda_{r p+1}^{-1}),
\end{align*}
which tends to $+\infty$ for $r\to \infty$.
Consequently, since  $\{\Sigma_r\}_{r\in \N}$ is bounded, it contains a convergent subsequence, 
whose limit is by the above argumentation an inner point.
\end{proof}

The next lemma shows, that the solution of~\eqref{tyler_1} is no longer unique. 
However,  there exists a unique solution $\Sigma$ with trace~$d$, 
and all other solutions are of the form $\lambda \Sigma$ with $\lambda>0$.

\begin{Theorem}[Uniqueness of Scatter up to a factor, $\nu = 0$]\label{Prop:scatter_nonunique}
Let $d \ge 2$ and $x_i\in \mathbb R^d$, $i=1,\ldots,n$ fulfill Assumption~\ref{Ass:lin_ind} (i).
Let $w \in \mathring \Delta_n$ satisfy $w_{\max} < 1/d$.
Then, it holds 
\begin{equation}
\sum_{i=1}^{n}w_i \frac{\Sigma^{-\frac{1}{2}} x_i x_i^\tT \Sigma^{-\frac{1}{2}}}{x_i^\tT \Sigma^{-1}x_i} 
=\sum_{i=1}^{n}w_i \frac{S^{-\frac{1}{2}} x_i x_i^\tT S^{-\frac{1}{2}}}{x_i^\tT \Sigma^{-1}x_i}\label{equality_up_to_factor}
\end{equation}
if and only if $S = \lambda \Sigma$ for some $\lambda >0$. 
The critical points of $L_0$ are unique up to a positive factor.
\end{Theorem}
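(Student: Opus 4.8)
The \emph{if} direction of \eqref{equality_up_to_factor} is immediate from scale invariance: every summand on each side is unchanged when the matrix in it is scaled (on the left $\Sigma\mapsto\lambda\Sigma$, on the right $S\mapsto\lambda S$), because the factor $\lambda^{-1}$ produced in the numerator is cancelled by the same factor in the Mahalanobis-type denominator $\delta_i$. Thus $S=\lambda\Sigma$ makes both sides agree. For the final assertion I would observe that two critical points $\Sigma$ and $S$ of $L_0$ each satisfy \eqref{tyler_1}, so both sides of \eqref{equality_up_to_factor} equal $\tfrac1d I$; the \emph{only if} direction then forces $S=\lambda\Sigma$, and all critical points become positive multiples of one another. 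Hence everything reduces to the \emph{only if} direction, for which I first normalize: rewriting \eqref{tyler_1} in the equivalent form $\Sigma=d\sum_{i}w_i\,x_ix_i^\tT/(x_i^\tT\Sigma^{-1}x_i)$ shows that the defining relation is covariant under the congruence $x_i\mapsto\Sigma^{-1/2}x_i$, $S\mapsto\Sigma^{-1/2}S\Sigma^{-1/2}$ (exactly as in the proof of Theorem~\ref{Theo:scatter_uniqueII}), and linear independence from Assumption~\ref{Ass:lin_ind}(i) is preserved. I may therefore assume $\Sigma=I$, so that $I$ and $S$ are \emph{both} critical.

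Writing the unit vectors $g_i\coloneqq x_i/\norm{x_i}{2}$, the two critical-point identities read $\tfrac1d I=\sum_i w_i\,g_ig_i^\tT$ and $\tfrac1d S=\sum_i w_i\,g_ig_i^\tT/(g_i^\tT S^{-1}g_i)$. The key step is to contract the first identity with $S$ and the second with $S^{-1}$ and take traces; using $\tr(g_ig_i^\tT)=1$, both contractions collapse to $\tfrac1d\tr(S)$, which yields the single scalar identity
\begin{equation*}
\sum_{i=1}^n w_i\, g_i^\tT S\, g_i \;=\; \sum_{i=1}^n w_i\,\frac{1}{g_i^\tT S^{-1}g_i}.
\end{equation*}

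By Cauchy--Schwarz, $\bigl(g_i^\tT S g_i\bigr)\bigl(g_i^\tT S^{-1}g_i\bigr)\ge (g_i^\tT g_i)^2=1$, so each summand on the left dominates the corresponding summand on the right. Since the two weighted sums are equal and every $w_i>0$, each of these inequalities must in fact be an equality, and equality in Cauchy--Schwarz forces $S^{1/2}g_i$ and $S^{-1/2}g_i$ to be parallel; that is, each $g_i$ — equivalently each sample $x_i$ — is an eigenvector of $S$. To finish I argue that $S$ has only one eigenvalue: if $S$ had $s\ge2$ distinct eigenvalues with eigenspaces $V_1,\dots,V_s$ of dimensions $m_1,\dots,m_s$ summing to $d$, then each $x_i$ would lie in exactly one $V_j$ (distinct eigenspaces meet only in $0$, and $x_i\neq0$ by Assumption~\ref{Ass:lin_ind}(i)). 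Each $m_j\le d-1$, so Assumption~\ref{Ass:lin_ind}(i) permits at most $m_j$ samples in $V_j$, giving at most $\sum_j m_j=d$ samples in total and contradicting $n>d$ (which follows from $w_{\max}<1/d$ via $1=\sum_i w_i\le n\,w_{\max}<n/d$). Hence $s=1$ and $S=\lambda I=\lambda\Sigma$.

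The step I expect to be the main obstacle is the replacement of the \emph{strict} matrix inequality that drives the case $\nu>0$ in Theorem~\ref{Theo:scatter_uniqueII}. Scale invariance of $L_0$ destroys that strictness — one can no longer conclude $\lambda_1\le1$ and $\lambda_d\ge1$ for the eigenvalues of $S$ — so one must instead extract structural information from the \emph{equality case} of Cauchy--Schwarz (all samples confined to eigenspaces of $S$) and close the argument with the dimension count governed by Assumption~\ref{Ass:lin_ind}(i) and $n>d$. The affine normalization $\Sigma=I$ is precisely what makes this tractable: it lets $\Sigma$ and $S$ be compared through the single symmetric matrix $S$ and circumvents the non-commutativity of $\Sigma^{-1/2}$ and $S^{-1/2}$, which would otherwise block a clean termwise inequality.
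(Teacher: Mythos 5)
Your core argument is sound, but as written it proves less than the theorem asserts. The ``only if'' direction of \eqref{equality_up_to_factor} has as its \emph{only} hypothesis the matrix identity itself, for an arbitrary pair $\Sigma, S\in \SPD(d)$; nothing says that either matrix is a critical point of $L_0$. Your sentence ``I may therefore assume $\Sigma=I$, so that $I$ and $S$ are both critical'' therefore smuggles in an assumption that is not granted, and your key scalar identity is then derived from the two critical-point identities rather than from \eqref{equality_up_to_factor}. The effect is that you prove only the final assertion of the theorem (uniqueness of critical points up to a positive factor --- admittedly the part the paper actually uses later, e.g.\ in Theorem~\ref{thm:main_0}), not the stated equivalence; it also makes your reduction slightly circular, since you reduce the final assertion to the ``only if'' direction and then prove that direction only in the critical case.

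The gap closes with a one-line repair: with $\Sigma=I$, multiply \eqref{equality_up_to_factor} by $S$ and take traces. Since $\tr\bigl(S\,S^{-\frac12}x_ix_i^\tT S^{-\frac12}\bigr)=x_i^\tT x_i$, this yields exactly your identity
\begin{equation}
\sum_{i=1}^n w_i\, g_i^\tT S\, g_i \;=\; \sum_{i=1}^n w_i\,\frac{1}{g_i^\tT S^{-1}g_i},
\qquad g_i \coloneqq \frac{x_i}{\norm{x_i}{2}},
\end{equation}
with no criticality needed. (Your stated derivation also has a small slip: contracting $\tfrac1d S=\sum_i w_i\,g_ig_i^\tT/(g_i^\tT S^{-1}g_i)$ with $S^{-1}$ gives only the trivial identity $1=1$; one should instead take its plain trace, or contract the $S^{-\frac12}\cdot S^{-\frac12}$ form of \eqref{tyler_1} with $S$.) After this repair the rest of your argument is correct and is a genuinely different route from the paper's: Cauchy--Schwarz gives $(g_i^\tT S g_i)(g_i^\tT S^{-1}g_i)\ge 1$ termwise, equality of the weighted sums forces termwise equality, hence every $x_i$ is an eigenvector of $S$, and the eigenspace dimension count together with $n>d$ (from $w_{\max}<1/d$) and Assumption~\ref{Ass:lin_ind}(i) forces $S=\lambda I$. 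The paper instead multiplies the identity by the orthogonal projector $P$ onto the leading eigenspace of $S^{-1}$, takes traces, and uses the termwise inequality $\lambda_1/(x_i^\tT S^{-1}x_i)\ge 1/(x_i^\tT x_i)$ to conclude $Px_i\in\{0,x_i\}$ for every $i$, then counts dimensions of that eigenspace and its orthogonal complement. Both are termwise-equality arguments, but yours extracts the stronger structural fact (all samples are eigenvectors) in one stroke and treats all eigenspaces symmetrically, whereas the paper isolates a single eigenspace; you also correctly diagnosed why the strict matrix inequality driving Theorem~\ref{Theo:scatter_uniqueII} is unavailable at $\nu=0$.
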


\begin{proof}
Clearly, if $S = \lambda \Sigma$ then~\eqref{equality_up_to_factor} holds true. \\
To show the reverse direction, we may as in the proof of Theorem~\ref{Theo:scatter_uniqueII} w.l.o.g.\ assume that $\Sigma=I$. 
Let $\lambda_1$ denote the largest eigenvalue of $S^{-1}$ and assume that it has multiplicity $k < d$. 
Let $e_1,\ldots,e_k$ be the corresponding orthonormal eigenvectors. 
Further, let
$
P = \sum\limits_{i=1}^k e_i e_i^\tT 
$
be the associated orthogonal projector onto $\lspan\{e_1,\ldots,e_k\}$. 
Equality~\eqref{equality_up_to_factor}  reads for $\Sigma=I$ as
\begin{equation*}
\sum_{i=1}^n w_i \frac{x_i x_i^\tT}{x_i^\tT x_i} = 	\sum_{i=1}^n w_i \frac{S^{-\frac{1}{2}}x_i x_i^\tT S^{-\frac{1}{2}}}{x_i^\tT S^{-1} x_i}.
\end{equation*}
Multiplying both sides with $P$ and taking the trace gives
\begin{equation}
\sum_{i=1}^n w_i \frac{x_i^\tT P x_i }{x_i^\tT x_i} =\lambda_1	\sum_{i=1}^n w_i \frac{x_i^\tT P x_i }{x_i^\tT S^{-1} x_i}.\label{proj_trace}
\end{equation}
Since $\frac{1}{\lambda_1} x_i^\tT S^{-1} x_i \leq x_i^\tT x_i$ with equality if and only if $Px_i = x_i$, 
this  implies $Px_i = 0$ or $Px_i = x_i$ for all $i=1,\ldots,n$. 
This contradicts the assumptions on the points $x_i$ and $w_{\max}$ unless $P=I$. Thus, $S = \lambda_1 I$. 
\end{proof}

\subsection{Estimation of Location and Scatter}
In order to show the existence and  uniqueness of a joint minimizer of the likelihood function $L(\mu,\Sigma)$, 
we use the established technique to consider the location 
and scatter estimation problem as a higher dimensional centered  scatter only problem.
We need the following auxiliary lemma, see, e.g. \cite{DPS2015}.

\begin{Lemma}\label{Lem:higher_dim}
Let $\lambda > 0$, $\mu \in \mathbb R^d$ and $\Sigma \in \SPD(d)$.
Then it holds
\begin{equation} \label{form}
A_\lambda = \lambda 
\begin{pmatrix}
	\Sigma + \mu \mu^\tT  &  \mu  \\
	\mu^\tT  & 1
\end{pmatrix} \in \SPD(d+1).
\end{equation}
Conversely, every $A \in \SPD(d+1)$ can be written in the form \eqref{form} with uniquely determined $\lambda > 0$, 
$\mu \in \mathbb R^d$ and $\Sigma \in \SPD(d)$.
\end{Lemma}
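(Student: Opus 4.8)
The plan is to reduce both directions to the Schur complement characterization of positive definiteness for symmetric $2\times 2$ block matrices: a symmetric matrix $\begin{pmatrix} P & q \\ q^\tT & r \end{pmatrix}$ with $r > 0$ lies in $\SPD(d+1)$ if and only if the Schur complement $P - \tfrac{1}{r}\, q q^\tT$ lies in $\SPD(d)$.

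For the forward direction, I first note that $A_\lambda$ is symmetric by construction, so only positive definiteness remains to be checked. Applying the criterion with $P = \Sigma + \mu\mu^\tT$, $q = \mu$ and $r = 1$, the Schur complement is exactly $\Sigma + \mu\mu^\tT - \mu\mu^\tT = \Sigma$, which is positive definite by assumption; hence the bracketed matrix is in $\SPD(d+1)$, and multiplying by the scalar $\lambda > 0$ preserves this.

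For the converse, I would write an arbitrary $A \in \SPD(d+1)$ in block form $A = \begin{pmatrix} B & c \\ c^\tT & \alpha \end{pmatrix}$ with $B \in \Sym(d)$, $c \in \R^d$ and $\alpha \in \R$; since $\alpha = e_{d+1}^\tT A\, e_{d+1}$ is a diagonal entry of an SPD matrix, $\alpha > 0$. Comparing entrywise with the target form forces $\lambda = \alpha$, then $\mu = c/\alpha$, and finally $\Sigma = \tfrac{1}{\alpha} B - \tfrac{1}{\alpha^2} c c^\tT = \tfrac{1}{\alpha}\bigl(B - \tfrac{1}{\alpha} c c^\tT\bigr)$. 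The term in parentheses is precisely the Schur complement of $\alpha$ in $A$, which is positive definite because $A$ is; dividing by $\alpha > 0$ then shows $\Sigma \in \SPD(d)$, so the decomposition exists.

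Uniqueness is immediate: the three displayed identities express $\lambda$, $\mu$ and $\Sigma$ as explicit functions of the blocks of $A$, so they are determined by $A$. I do not expect a genuine obstacle here, since the whole statement is essentially a packaging of the Schur complement fact; the only points requiring care are to verify consistently that the Schur complement collapses to exactly $\Sigma$ in the forward direction and to $\alpha \Sigma$ in the converse, and to record $\alpha > 0$ in order to legitimize the divisions.
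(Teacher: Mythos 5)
Your proof is correct and complete. Note that the paper itself does not prove this lemma at all: it states it as an auxiliary result and refers to the survey of D\"umbgen et al.\ for a proof, so there is no in-paper argument to compare against. Your self-contained Schur-complement argument is the natural one and fills exactly this gap: the forward direction collapses the Schur complement of the $(d+1,d+1)$ entry to $\Sigma + \mu\mu^\tT - \mu\mu^\tT = \Sigma$, the converse inverts the block identification ($\lambda = \alpha$, $\mu = c/\alpha$, $\Sigma = \tfrac{1}{\alpha}\bigl(B - \tfrac{1}{\alpha}cc^\tT\bigr)$, with positivity of $\Sigma$ coming from the Schur complement of $A$), and uniqueness is indeed immediate since the block equations force these values. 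The only implicit ingredient is the standard equivalence ``$M \in \SPD(d+1)$ with scalar corner $r>0$ iff the Schur complement is in $\SPD(d)$,'' which you state correctly; your argument is also consistent with the formulas the paper records right after the lemma, namely the explicit expression for $A_\lambda^{-1}$ and $|A_\lambda| = \lambda^{d+1}|\Sigma|$, both of which follow from the same block factorization.
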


The inverse of the matrix $A_\lambda$ in \eqref{form} is given by
\begin{equation*}
A_\lambda^{-1} = \frac{1}{\lambda}\begin{pmatrix}
	\Sigma^{-1} & -\Sigma^{-1}\mu   \\
	-\mu^\tT \Sigma^{-1} & 1 + \mu^\tT \Sigma^{-1} \mu
\end{pmatrix}.
\end{equation*}
It fulfills
\begin{equation} \label{quadratic_form}
(x^\tT \, 1) \, A_\lambda^{-1} \begin{pmatrix} x \\1 \end{pmatrix} =  \frac{1}{\lambda}\bigl(1 + (x-\mu)^\tT \Sigma^{-1} (x-\mu)\bigr)
\end{equation}
and
$|A| = \lambda^{d+1} |\Sigma|$. 
Using the last two relations  
and setting 
\begin{equation} \label{form_1}
A \coloneqq
\begin{pmatrix}
	\Sigma + \mu \mu^\tT  &  \mu  \\
	\mu^\tT  & 1
\end{pmatrix},\quad
z_i \coloneqq \begin{pmatrix} x_i \\1 \end{pmatrix}, \quad i=1,\ldots,n,
\end{equation}
we can rewrite the function $L(\mu,\Sigma)$ with $\nu > 1$ in \eqref{L} as
\begin{align}
L(\mu,\Sigma) 
&= (d+ \nu)\sum_{i=1}^n w_i \log\bigl(\nu-1 + z_i^\tT A^{-1}z_i\bigr)+ \log \abs{A}\\
& = (\tilde{d} + \tilde{\nu}) \sum_{i=1}^n w_i \log\bigl(\tilde{\nu} + z_i^\tT A^{-1}z_i\bigr)+ \log \abs{A } 
\eqqcolon  \tilde L(A), \label{gleich}
\end{align}
where $\tilde d  \coloneqq d+1$, $\tilde \nu  \coloneqq \nu -1$ and the tilde on top of $L$ 
shows that the function is considered in dimension $d+1$ now.

The points $x_i \in \mathbb R^d$, $i=1,\ldots,k$ are called \emph{affinely independent} if
$\sum\limits_{i=1}^k \lambda_i x_i = 0$ and $\sum\limits_{i=1}^k \lambda_i = 0$ implies 
$\lambda_1 = \ldots = \lambda_k = 0$. It can be immediately seen that the points  $x_i \in \mathbb R^d$, $i=1,\ldots,k$
are affinely independent if and only if the points  
$z_i  =( x_i^\tT \, 1)^\tT \in \mathbb R^{d+1}$, $i=1,\ldots,k$
are linearly independent.

Let $\nu >1$.
By Theorem~\ref{Theo:scatter_uniqueII}
we know that $\tilde L(A)$ has a unique minimizer in $\SPD(d+1)$ if the following
conditions according to Assumption \ref{Ass:lin_ind} are fulfilled:

\begin{Assumption}\label{Ass:affin_ind}
\begin{enumerate}
	\item Any subset of $\le d+1$  samples $x_i$, $i \in \{1,\ldots,n\}$ is affinely independent.
	\item $d w_{\text{max}}< \frac{\nu + d -1 }{\nu +d}$.
\end{enumerate}
\end{Assumption}

Note that $w \in \mathring \Delta_n$ and (ii) imply $n \ge d+1$ for $\nu > 1$ 
and $n > d+1$ for $\nu = 1$.

By the next lemma, see, e.g. \cite{DPS2015},
we can restrict our attention to matrices of the form~\eqref{form_1} 
when minimizing $\tilde L$ over $\SPD(d+1)$.

\begin{Lemma} \label{lem:gut}
Let $\nu>1$ and $x_i\in\R^d$, $i=1,\ldots,n$ and $w \in \mathring \Delta_n$ fulfill Assumption~\ref{Ass:affin_ind}. 
For $z_i \coloneqq ( x_i^\tT \, 1)^\tT $, $i=1,\ldots,n$, let the function  $\tilde L$ be defined by \eqref{gleich}.
Then the critical point of $\tilde L$ has the $(d+1,d+1)$-th entry 1.
\end{Lemma}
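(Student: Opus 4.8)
The plan is to read the bottom-right entry of the critical point of $\tilde L$ off its defining equation and to recognize that this entry coincides with the left-hand side of the trace identity \eqref{trace_1}; the hypothesis $\nu > 1$ is exactly what makes that identity available.

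First I would note that $\tilde L$ is nothing but the centered scatter-only functional in dimension $\tilde d = d+1$ with degrees of freedom $\tilde\nu = \nu - 1 > 0$, applied to the lifted samples $z_i = (x_i^\tT\,1)^\tT \in \R^{d+1}$. By the observation preceding the lemma, affine independence of any $\le d+1$ of the $x_i$ (Assumption~\ref{Ass:affin_ind}(i)) is equivalent to linear independence of any $\le d+1$ of the $z_i$, i.e.\ Assumption~\ref{Ass:lin_ind}(i) in dimension $\tilde d$; and since $\tilde d - 1 = d$, $\tilde\nu + \tilde d - 1 = \nu + d - 1$ and $\tilde\nu + \tilde d = \nu + d$, the bound in Assumption~\ref{Ass:affin_ind}(ii) is precisely Assumption~\ref{Ass:lin_ind}(ii) for the $z_i$. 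Hence Theorems~\ref{Theo:scatter_existence} and~\ref{Theo:scatter_uniqueII} apply and $\tilde L$ has a unique critical point $A \in \SPD(d+1)$.

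Next I would write the critical-point condition for $\tilde L$. Rearranging \eqref{mult_cond_S} as in \eqref{cond_Sigma_short}, but in dimension $\tilde d$ with $\tilde\nu$ and samples $z_i$, gives
\begin{equation*}
A = (\tilde d + \tilde\nu) \sum_{i=1}^n w_i \frac{z_i z_i^\tT}{\tilde\nu + z_i^\tT A^{-1} z_i}, \qquad \tilde d + \tilde\nu = d + \nu .
\end{equation*}
Because the last coordinate of every $z_i$ equals $1$, the $(d+1,d+1)$-entry of each $z_i z_i^\tT$ is $1$, so comparing the $(d+1,d+1)$-entries of both sides yields
\begin{equation*}
A_{d+1,d+1} = (d+\nu) \sum_{i=1}^n w_i \frac{1}{\tilde\nu + z_i^\tT A^{-1} z_i}.
\end{equation*}

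Finally I would invoke the trace relation. Taking the trace of the critical-point equation and repeating verbatim the computation that produced \eqref{trace_1}, but now in dimension $\tilde d$ with $\tilde\nu$ in place of $\nu$, gives
\begin{equation*}
1 = (\tilde d + \tilde\nu) \sum_{i=1}^n w_i \frac{1}{\tilde\nu + z_i^\tT A^{-1} z_i} = (d+\nu) \sum_{i=1}^n w_i \frac{1}{\tilde\nu + z_i^\tT A^{-1} z_i},
\end{equation*}
and the right-hand side is exactly the expression found for $A_{d+1,d+1}$. Therefore $A_{d+1,d+1} = 1$, which by Lemma~\ref{Lem:higher_dim} forces the scaling factor $\lambda$ of $A$ to equal $1$, so that $A$ has the form~\eqref{form_1}. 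The only delicate point, and the main obstacle, is the single division by $\tilde\nu$ needed to pass from $\tilde d = (\tilde d+\tilde\nu)\sum_{i=1}^n w_i \frac{z_i^\tT A^{-1} z_i}{\tilde\nu+z_i^\tT A^{-1} z_i}$ to the trace identity: this is legitimate precisely because $\tilde\nu = \nu - 1 > 0$, which is exactly why the lemma assumes $\nu > 1$ (at $\nu = 1$ the factor $\tilde\nu$ vanishes and the argument breaks down). Apart from this, the proof is just bookkeeping of one matrix entry against the trace relation.
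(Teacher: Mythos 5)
Your proof is correct. Note that the paper itself does not prove Lemma~\ref{lem:gut} at all: it is stated with a pointer to the literature (``see, e.g.~\cite{DPS2015}''), so your self-contained argument fills a gap rather than duplicating a proof. Your route is, however, exactly in the spirit of the paper's own computations: the translation of Assumption~\ref{Ass:affin_ind} into Assumption~\ref{Ass:lin_ind} for the lifted samples $z_i$ (so that Theorems~\ref{Theo:scatter_existence} and~\ref{Theo:scatter_uniqueII} give the unique critical point $A$ of $\tilde L$), and the comparison of the $(d+1,d+1)$-entry of the fixed-point equation $A = (\tilde d+\tilde\nu)\sum_i w_i\, z_iz_i^\tT/(\tilde\nu + z_i^\tT A^{-1}z_i)$ with the trace identity \eqref{trace_1} transported to dimension $\tilde d = d+1$, is precisely the bookkeeping the authors perform inside the proof of Theorem~\ref{thm:main} when they verify \eqref{hhelp_2} blockwise (there the bottom-right block identity $(d+\nu)\sum_i w_i (\nu+\delta_i)^{-1}=1$ is the same use of \eqref{trace_1}). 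You also correctly isolate where $\nu>1$ enters: the division by $\tilde\nu=\nu-1$ in deriving the trace identity, which is consistent with the paper's separate treatment of $\nu=1$ via $\tilde L_0$ in Theorem~\ref{thm:main_0}. The only cosmetic remark is that your final sentence (invoking Lemma~\ref{Lem:higher_dim} to conclude $\lambda=1$ and hence the form \eqref{form_1}) proves slightly more than the stated claim $A_{d+1,d+1}=1$; that extra step is harmless and is indeed how the lemma is used afterwards.
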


Now the existence and uniqueness of a minimizer of $L$ follows immediately from Lemmata~\ref{Lem:higher_dim} and \ref{lem:gut}
and  \eqref{gleich}. 

\begin{Theorem}[Existence and Uniqueness of Location and Scatter, $\nu > 1$] \label{thm:main}
Let $x_i\in\R^d$, $i=1,\ldots,n$ and $w \in \mathring \Delta_n$ fulfill Assumption~\ref{Ass:affin_ind}. 
Then $L$ has a unique critical point and this point is a minimizer of $L$.
\end{Theorem}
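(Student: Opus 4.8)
The plan is to reduce the joint estimation of $(\mu,\Sigma)$ to the centered scatter-only problem in dimension $d+1$ that was already settled in Theorems~\ref{Theo:scatter_existence} and~\ref{Theo:scatter_uniqueII}, and then to transport the result back through the correspondence of Lemma~\ref{Lem:higher_dim}. Concretely, I would work with the lifted samples $z_i = (x_i^\tT\,1)^\tT \in \R^{d+1}$ and the function $\tilde L$ from~\eqref{gleich}, which is a scatter functional of exactly the same shape as $L(\Sigma)$ but with the parameters $\tilde d = d+1$ and $\tilde\nu = \nu-1 > 0$ (here the hypothesis $\nu > 1$ enters).

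First I would verify that Assumption~\ref{Ass:affin_ind} for the $x_i$ is precisely Assumption~\ref{Ass:lin_ind} for the $z_i$ with $\tilde d,\tilde\nu$ in place of $d,\nu$: part (i) is the equivalence, noted just before Assumption~\ref{Ass:affin_ind}, between affine independence of $\le d+1$ of the $x_i$ and linear independence of the corresponding $z_i$, while part (ii) reads $(\tilde d - 1)w_{\max} = d\,w_{\max} < \frac{\nu+d-1}{\nu+d} = \frac{\tilde\nu+\tilde d-1}{\tilde\nu+\tilde d}$, which is exactly Assumption~\ref{Ass:lin_ind}(ii) in dimension $d+1$. With the assumptions in force, Theorems~\ref{Theo:scatter_existence} and~\ref{Theo:scatter_uniqueII} applied to $\tilde L$ yield a unique critical point $A^\ast \in \SPD(d+1)$, and since a minimizer exists and every minimizer is a critical point, this $A^\ast$ is the global minimizer of $\tilde L$.

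Next I would invoke Lemma~\ref{lem:gut}, which guarantees that $A^\ast$ has $(d+1,d+1)$-entry equal to $1$; hence, by the bijection of Lemma~\ref{Lem:higher_dim} (forcing $\lambda = 1$), $A^\ast$ corresponds to a unique pair $(\mu^\ast,\Sigma^\ast)\in\R^d\times\SPD(d)$ of the form~\eqref{form_1}. Since the map $(\mu,\Sigma)\mapsto A$ of~\eqref{form_1} is a bijection onto $\mathcal M \coloneqq \{A\in\SPD(d+1)\colon A_{d+1,d+1}=1\}$ along which $L(\mu,\Sigma)=\tilde L(A)$ holds by~\eqref{gleich}, and since $A^\ast$ minimizes $\tilde L$ over all of $\SPD(d+1)\supseteq\mathcal M$, the pair $(\mu^\ast,\Sigma^\ast)$ is the unique minimizer of $L$.

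The step that needs genuine care — and which I expect to be the main obstacle — is the claim that $(\mu^\ast,\Sigma^\ast)$ is the \emph{only} critical point of $L$, not merely the minimizer: the chain rule only shows that critical points of $L$ correspond to critical points of the \emph{restriction} $\tilde L|_{\mathcal M}$, which a priori could be more numerous than the critical points of $\tilde L$ on the full cone. To close this, I would compare the critical-point equation of $\tilde L$ at a block matrix $A$ of the form~\eqref{form_1} block-by-block with the equations~\eqref{mult_cond_a}--\eqref{mult_cond_S} for $L$: using~\eqref{quadratic_form} one has $\tilde\nu + z_i^\tT A^{-1}z_i = \nu+\delta_i$ and $\tilde d+\tilde\nu = d+\nu$, so the $(d+1,d+1)$-entry of the $\tilde L$-equation is exactly the trace relation~\eqref{trace_1}, the last row/column gives~\eqref{mult_cond_a}, and the leading $d\times d$ block reduces to~\eqref{mult_cond_S} after cancelling the cross terms via~\eqref{mult_cond_a}. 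Because any critical point of $L$ already satisfies~\eqref{mult_cond_S} and hence~\eqref{trace_1}, all three blocks hold simultaneously, so $A(\mu,\Sigma)$ is a genuine critical point of $\tilde L$ lying in $\mathcal M$. The uniqueness of the critical point of $\tilde L$ from Theorem~\ref{Theo:scatter_uniqueII} then forces $(\mu^\ast,\Sigma^\ast)$ to be the unique critical point of $L$, completing the argument.
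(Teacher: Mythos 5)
Your proposal is correct and follows essentially the same route as the paper's own proof: lifting to the $(d+1)$-dimensional scatter-only problem with $\tilde d = d+1$, $\tilde\nu = \nu-1$, invoking Theorem~\ref{Theo:scatter_uniqueII} together with Lemmata~\ref{Lem:higher_dim} and~\ref{lem:gut}, and closing the converse direction by verifying the critical-point equation of $\tilde L$ block-by-block via \eqref{mult_cond_a}, \eqref{mult_cond_S} and \eqref{trace_1}. The step you flag as the main obstacle --- that a critical point of $L$ must be shown to be a critical point of $\tilde L$ on the full cone $\SPD(d+1)$, not merely of the restriction to matrices with $(d+1,d+1)$-entry $1$ --- is exactly the content of the paper's converse argument.
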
 

\begin{proof}
Let  $A$ be the critical point/minimizer of $\tilde L$. 
By Lemma \ref{lem:gut} it has the form \eqref{form_1}
and by \eqref{gleich}, we see that the corresponding $(\mu,\Sigma)$ is the minimizer of $L$ and thus a critical point of $L$.

Conversely, let $(\mu,\Sigma)$ be a critical point of $L$, i.e. fulfill
\eqref{mult_cond_a} and \eqref{mult_cond_S}. 
Define $A$ by \eqref{form_1}. We have to show that $A$ fulfills the critical point condition
\begin{equation} \label{hhelp_2}
A = (\tilde d + \tilde \nu) \sum_{i=1}^n w_i \frac{z_i z_i^\tT}{\nu - 1 + z_i^\tT A^{-1} z_i},
\end{equation}
where $z_i \coloneqq ( x_i^\tT \, 1)^\tT$, $i=1,\ldots,n$.
Since $\tilde L$ has only one critical point, this would imply the assertion.
By \eqref{quadratic_form} and definition of $z_i$, this can be written as
\begin{align} 
A = (d+\nu) \sum_{i=1}^n w_i \frac{\begin{pmatrix} x_i x_i^\tT & x_i\\ x_i^\tT & 1\end{pmatrix}}{\nu  + \delta_i}.
\end{align}
Considering the different blocks of this matrix, we obtain with \eqref{mult_cond_a}, \eqref{mult_cond_S} and \eqref{trace_1} that
\begin{align}
(d+\nu) \sum_{i=1}^n w_i \frac{x_i x_i^\tT}{\nu  + \delta_i}
&= 
\Sigma + (d+\nu) \sum_{i=1}^n w_i \frac{x_i \mu^\tT + \mu x_i^\tT - \mu \mu^tT}{\nu  + \delta_i}
=
\Sigma + \mu \mu^\tT,
\\
(d+\nu) \sum_{i=1}^n w_i \frac{x_i }{\nu  + \delta_i}
&= \mu, \quad \mathrm{and} \quad
(d+\nu) \sum_{i=1}^n w_i \frac{1 }{\nu  + \delta_i} = 1.
\end{align}
Thus, \eqref{hhelp_2} holds indeed true.
\end{proof}

	
The multivariate Cauchy distribution, i.e. the case $\nu=1$ requires a special consideration. 

\begin{Theorem}[Existence and Uniqueness of Location and Scatter, $\nu = 1$] \label{thm:main_0}
Let $x_i\in\R^d$, $i=1,\ldots,n$ and $w \in \mathring \Delta_n$ fulfill Assumption~\ref{Ass:affin_ind}. 
Then $L$ has a unique critical point and this point is a minimizer of $L$.
\end{Theorem}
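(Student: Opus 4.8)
The plan is to reduce the Cauchy case $\nu = 1$ to the projected normal ($\nu = 0$) results of Theorems~\ref{Theo:scatter_unique_0} and~\ref{Prop:scatter_nonunique}, applied in dimension $d+1$. Indeed, the rewriting \eqref{gleich} expresses $L(\mu,\Sigma) = \tilde L(A)$ with $z_i = (x_i^\tT\,1)^\tT$, $A$ of the form \eqref{form_1}, and $\tilde L$ the centered scatter functional in dimension $\tilde d = d+1$ with shifted parameter $\tilde\nu = \nu - 1$. For $\nu = 1$ we get $\tilde\nu = 0$, so $\tilde L$ coincides \emph{exactly} with the projected normal objective $L_0$ from \eqref{tyler} in dimension $d+1$, namely $\tilde L(A) = (d+1)\sum_{i=1}^n w_i \log(z_i^\tT A^{-1}z_i) + \log\abs{A}$. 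The proof for $\nu > 1$ cannot be copied verbatim, because it relied on Lemma~\ref{lem:gut} to pin the $(d+1,d+1)$-entry of the critical point to $1$; here $\tilde L$ is scale invariant and its critical points form a whole ray, so this normalization must be imposed by hand.

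First I would check that the hypotheses of the $\nu = 0$ theorems hold for $\tilde L$ over $\SPD(d+1)$. The linear independence required by Assumption~\ref{Ass:lin_ind}(i) for the $z_i$ in $\R^{d+1}$ is precisely the affine independence of the $x_i$ demanded by Assumption~\ref{Ass:affin_ind}(i). Moreover, specializing Assumption~\ref{Ass:affin_ind}(ii) to $\nu = 1$ gives $d\,w_{\max} < \frac{d}{d+1}$, i.e. $w_{\max} < \frac{1}{d+1} = \frac{1}{\tilde d}$, which is the weight condition in Theorems~\ref{Theo:scatter_unique_0} and~\ref{Prop:scatter_nonunique}. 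Hence $\tilde L$ attains a minimizer $A^\ast \in \SPD(d+1)$, and all its critical points are of the form $\lambda A^\ast$, $\lambda > 0$.

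Finally I would remove the scale degeneracy and transfer back. Since the $(d+1,d+1)$-entry of $\lambda A^\ast$ is $\lambda\,(A^\ast)_{d+1,d+1}$ with $(A^\ast)_{d+1,d+1} > 0$, there is a unique $\lambda$ for which this entry equals $1$; denote the resulting matrix by $A$. By Lemma~\ref{Lem:higher_dim} this $A$ has the form \eqref{form_1} for uniquely determined $(\mu,\Sigma)$. Using \eqref{quadratic_form} with $\lambda = 1$ we have $z_i^\tT A^{-1} z_i = 1 + \delta_i = \nu + \delta_i$, so the critical point equation $A = (d+1)\sum_i w_i\, z_i z_i^\tT/(z_i^\tT A^{-1}z_i)$ for $\tilde L$ reads exactly as \eqref{hhelp_2}; the block decomposition performed in the proof of Theorem~\ref{thm:main} then shows it is equivalent to $(\mu,\Sigma)$ satisfying \eqref{mult_cond_a}, \eqref{mult_cond_S} and \eqref{trace_1}, i.e. to $(\mu,\Sigma)$ being a critical point of $L$. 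Conversely, any critical point of $L$ lifts via \eqref{form_1} to a critical point of $\tilde L$ whose $(d+1,d+1)$-entry is $1$; by the scale-uniqueness such a normalized critical point is unique, giving uniqueness of $(\mu,\Sigma)$. Because $\tilde L$ is scale invariant, $\inf_{A}\tilde L(A) = \inf_{(\mu,\Sigma)} L(\mu,\Sigma)$, so the normalized minimizer $A$ yields the minimizer $(\mu,\Sigma)$ of $L$. The main obstacle is precisely this extra scale freedom of the lifted $\tilde\nu = 0$ problem: one must argue that normalizing the last diagonal entry is both possible and compatible with the decomposition of Lemma~\ref{Lem:higher_dim}, so that it collapses the ray of critical points to a single pair $(\mu,\Sigma)$.
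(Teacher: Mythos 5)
Your proof is correct and follows essentially the same route as the paper's: both reduce the case $\nu=1$ to the projected-normal ($\nu=0$) scatter results in dimension $d+1$, normalize the ray of critical points of $\tilde L_0$ by fixing the $(d+1,d+1)$-entry to $1$, and transfer back and forth via Lemma~\ref{Lem:higher_dim} and the block computation from the proof of Theorem~\ref{thm:main}. Your write-up merely makes explicit some steps the paper leaves implicit, namely that Assumption~\ref{Ass:affin_ind} with $\nu=1$ yields exactly the hypotheses ($w_{\max}<1/(d+1)$ and linear independence of the $z_i$) of Theorems~\ref{Theo:scatter_unique_0} and~\ref{Prop:scatter_nonunique}, and that scale invariance of $\tilde L_0$ makes the normalized critical point a minimizer.
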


\begin{proof}
As in \eqref{gleich} we see also for $\nu = 1$ with $A$ and $(\mu,\Sigma)$ related by \eqref{form_1} that
$L(\mu,\Sigma) =   \tilde L_0(A)$,
where $\tilde L_0$ resembles $L_0$ with respect to dimension $d+1$.
Let $A$ be the unique critical point of $\tilde L_0$ with $a_{d+1,d+1} = 1$. It is also a minimizer.
Then, by the above relation, $(\mu,\Sigma)$ obtained from $A$ by \eqref{form_1} is a minimizer of $L$
and also a critical point.
Conversely, let $(\mu,\Sigma)$ be any critical point of $L$.
Then we can show as in the proof of Theorem~\ref{thm:main} that the related $A$ in \eqref{form_1} is the unique critical point
of $\tilde L_0$ with $a_{d+1,d+1} = 1$. This finishes the proof.
\end{proof}

\section{Efficient Minimization Algorithm}\label{Sec:Algorithm}
{ In this section, we propose an efficient algorithm  to compute the ML estimates of the multivariate Student-$t$ distribution
and prove its convergence.
After finishing this paper we became aware that a Gauss-Seidel variant of our algorithm was already suggested 
by Kent et al.~\cite{KTV94} to speed up the classical EM algorithm without any convergence considerations.
In \cite{vanDyk1995}, see also \cite{MVD97}, 
van Dyk gave an interpretation of this algorithm from an EM theoretical point of view using a sophisticated choice of the hidden variables,
such that convergence is ensured by general results for the EM algorithm.
However, our derivation of the Jacobi variant of the algorithm and its convergence proof do not rely on an EM setting.}

We assume that $\nu \geq 0$ in case of estimating only $\Sigma$ and $\nu \geq 1$ when estimating both $\mu$  and $\Sigma$.
Conditions~\eqref{mult_cond_a} and~\eqref{mult_cond_S} can be reformulated as fixed-point equations
\begin{align}
\mu 
&= \frac{ \sum\limits_{i=1}^{n} w_i \frac{ 1}{\nu +   \delta_i}x_i}{ \sum\limits_{i=1}^{n} w_i  \frac{1}{\nu + \delta_i }}
\label{fp_mu},\\
\Sigma 
&=(d+\nu)
\sum\limits_{i=1}^{n} w_i \frac{ (x_i-\mu)(x_i-\mu)^\tT }{\nu + \delta_i}\label{fp_Sigma}.
\end{align}
Based on this, we propose the following Algorithm \ref{alg:myriad_mult_general} which is 
a Picard iteration for $\mu$
and a \emph{scaled  version} of the Picard iteration for $\Sigma$. 
For $d=1$, Algorithm \ref{alg:myriad_mult_general} coincides with the generalized myriad filtering considered in~\cite{LPS18}.

\begin{algorithm}[!ht]
\caption{Generalized Multivariate Myriad Filter (GMMF)} \label{alg:myriad_mult_general}
\begin{algorithmic}
	\State \textbf{Input:} $x_1,\ldots,x_n\in \R^d$, $w \in \mathring \Delta_n$ 
	\State \textbf{Initialization:} $\mu_0 =\frac{1}{n}\sum\limits_{i=1}^n x_i$, $\Sigma_0 =\frac{1}{n}\sum\limits_{i=1}^n (x_i-\mu_0)(x_i-\mu_0)^\tT$
	\For{$r=0,\ldots$}
	\begin{align*} 	
	\delta_{i,r} &=  (x_i-\mu_r)^\tT \Sigma_r^{-1} (x_i-\mu_r)\\
	\mu_{r+1}    
	&=   \frac{ \sum\limits_{i=1}^{n} w_i \frac{ 1}{\nu +   \delta_{i,r}}x_i}{ \sum\limits_{i=1}^{n} w_i  \frac{1}{\nu + \delta_{i,r} }} \\
	\Sigma_{r+1} 
	&=   \frac{\sum\limits_{i=1}^{n} w_i \frac{ (x_i-\mu_r)(x_i-\mu_r)^\tT }{\nu + \delta_{i,r}}}{\sum\limits_{i=1}^{n} w_i  \frac{1}{\nu + \delta_{i,r} }}
	\end{align*}
	\EndFor
\end{algorithmic}
\end{algorithm}

The classical EM algorithm for maximizing the log-likelihood function~\cite{LR95} with equal weights $w_i = \frac1n$, $i=1,\ldots,n$,
leads to the same iteration for $\mu$, 
but the iteration of $\Sigma$ reads as
\begin{align}
\Sigma_{r+1} 
&=   \frac{\nu+d}{n} \sum\limits_{i=1}^{n}   \frac{(x_i-\mu_{r+1})(x_i-\mu_{r+1})^\tT }{\nu + \delta_{i,r}}.
\end{align}
In the variant of Kent et al., this expression is divided by $\sum\limits_{i=1}^{n} w_i  \frac{1}{\nu + \delta_{i,r} }$.
Despite the nonuniform weights, our Algorithm \ref{alg:myriad_mult_general} differs from this update 
 by taking $\mu_r$ instead of $\mu_{r+1}$ on the right-hand side and can  therefore be seen as its Jacobi variant.

In the following, we focus on estimating both $\mu$  and $\Sigma$, where we assume that $\nu \geq 1$.
However, the algorithm without the iteration in $\mu$ and its convergence proof work also for estimating only $\Sigma$ 
with $\nu \geq 0$.
In case $\nu = 0$, we have to assume that $x_i \in \SP^{d-1}$, $i=1,\ldots,n$.
Then the iteration for $\Sigma$ is the well-known Tyler $M$-estimator~\cite{tyler1987a} and all iterates have trace 1. 

\subsection{Convergence Analysis}
Since Algorithm \ref{alg:myriad_mult_general} cannot be interpreted as an EM algorithm, we prove its convergence in the following.
We note that the following Lemmata \ref{Theo:descent_Sigma} and \ref{Theo:descent_mu}
on the monotone descent of the log likelihood function
could be also deduced from convergence results of the EM algorithm, but one obtains different estimates.
However, parts of those proofs are needed to prove Theorem \ref{Theo:descent_joint}.

Using again $\Sigma = R R^\tT$, we introduce the notation $y_i \coloneqq R^{-1}(x_i - \mu)$, $i=1,\ldots,n$ and
\begin{align}
S_0(\mu,\Sigma) &\coloneqq 	
(d+\nu)\sum_{i=1}^n w_i \frac{1}{\nu+ \delta_i},\label{S0_normalized} \\
S_1(\mu,\Sigma) &\coloneqq 	
(d+\nu)\sum_{i=1}^n w_i\frac{ y_i}{\nu+ \delta_i}, \label{S1_normalized}\\	
S_2(\mu,\Sigma) &\coloneqq 	
(d+\nu)\sum_{i=1}^n w_i \frac{y_i y_i^\tT} {\nu+ \delta_i}, \label{S2_normalized}	 
\end{align}
where the subscripts $0$, $1$ and $2$ stand for 'scalar', 'vector' and 'matrix', respectively.
Further we abbreviate $S_{jr} \coloneqq  S_j(\mu_r,\Sigma_r)$, $j=0,1,2$ and use this notation also if $\mu$ or $\Sigma$ is fixed.
By the Cholesky decomposition $\Sigma_r = R_r R_r^\tT$,
the iterations in Algorithm~\ref{alg:myriad_mult_general} read as
\begin{align}
\mu_{r+1} & = \mu_r + R_r \frac{S_{1r}}{S_{0r}} = \mu_r - \frac{1}{2S_{0r}}\Sigma_r \frac{\partial L }{\partial \mu}(\mu_r,\Sigma_r),\\
\Sigma_{r+1} & = R_{r+1} {R}^\tT_{r+1} = R_r  \frac{S_{2r}}{S_{0r}}R^\tT_{r}
= R_r  \frac{S_{2r}}{\frac{d+\nu}{\nu} - \frac{1}{\nu}\tr\bigl(S_{2r}\bigr)}R^\tT_{r}
= R_r  \frac{\nu S_{2r}}{d+\nu - \tr\bigl(S_{2r}\bigr)}R^\tT_{r} \\
&= \frac{1}{S_{0r} }\left(\Sigma_r - \Sigma_r\frac{\partial L }{\partial \Sigma}(\mu_r,\Sigma_r)\Sigma_r\right). \label{def_1}
\end{align}
We will need the difference of two iterates 
\begin{align}
L_\nu(\mu_{r+1},\Sigma_{r+1})- L_\nu(\mu_{r},\Sigma_{r})
& = (d+\nu)\sum_{i=1}^n w_i \log\Biggl(\frac{\nu + \delta_{i,r+1}}{ \nu + \delta_{i,r}} 
\frac{|\Sigma_{r+1}|^{\frac{1}{d+\nu}}}{|\Sigma_r|^{\frac{1}{d+\nu}}}\Biggr). \label{zielfkt}
\end{align}

We start with the convergence analysis of our algorithm for fixed $\mu$, where we might assume w.l.o.g. that $\mu = 0$

\begin{Lemma}\label{Theo:descent_Sigma}
Let $\nu \ge 0$, $\mu = 0$ and $x_i \not = 0$, $i=1,\ldots,n$.
Then $\{\Sigma_r\}_{r\in \N}$  defined by the iterations in Algorithm~\ref{alg:myriad_mult_general} 
satisfy
\begin{align}
L(\Sigma_{r+1})- L(\Sigma_{r}) &\leq 0 \quad \mathrm{for} \quad \nu > 0,\\
L_0(\Sigma_{r+1})- L_0(\Sigma_{r}) &\leq 0\quad \mathrm{for} \quad \nu = 0,
\end{align}
with equality if and only if $\Sigma_{r+1} = \Sigma_r$.
\end{Lemma}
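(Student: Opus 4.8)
The plan is to work in whitened coordinates and exploit the explicit increment formula \eqref{zielfkt}. Since $\mu=0$ and $\Sigma_r = R_r R_r^\tT$, set $y_i \coloneqq R_r^{-1} x_i$ and $B_r \coloneqq S_{2r}/S_{0r}$, so that $\Sigma_{r+1}=R_r B_r R_r^\tT$. Then $\delta_{i,r}=\|y_i\|^2$, $\delta_{i,r+1}=y_i^\tT B_r^{-1} y_i$ and $\abs{\Sigma_{r+1}}/\abs{\Sigma_r}=\abs{B_r}$, so \eqref{zielfkt} reads
$$L(\Sigma_{r+1}) - L(\Sigma_r) = (d+\nu)\sum_{i=1}^n w_i \log\frac{\nu + y_i^\tT B_r^{-1} y_i}{\nu + \|y_i\|^2} + \log\abs{B_r}.$$
The whole task reduces to bounding the right-hand side by $0$, with equality precisely when $B_r=I$, i.e. $\Sigma_{r+1}=\Sigma_r$. (For $\nu=0$ one replaces $(d+\nu)$ by $d$ and $\nu+\,\cdot$ by $\cdot$ throughout.)

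I would then estimate the two summands separately. For the first, concavity of $\log$ and $\sum_i w_i=1$ give, by Jensen's inequality, the bound $(d+\nu)\log\big(\sum_i w_i\frac{\nu+y_i^\tT B_r^{-1}y_i}{\nu+\|y_i\|^2}\big)$; using $S_{2r}=S_{0r}B_r$ together with \eqref{S0_normalized} and \eqref{S2_normalized}, the weighted average inside the logarithm collapses to exactly $S_{0r}$ (the $\nu$-part contributes $\tfrac{\nu}{d+\nu}S_{0r}$ and the quadratic part contributes $\tr\!\big(B_r^{-1}\tfrac{S_{2r}}{d+\nu}\big)=\tfrac{d}{d+\nu}S_{0r}$). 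Hence the first summand is at most $(d+\nu)\log S_{0r}$. For the determinant term I would apply the arithmetic--geometric mean inequality to the eigenvalues of $B_r\in\SPD(d)$, namely $\log\abs{B_r}\le d\log\!\big(\tr(B_r)/d\big)$, and evaluate $\tr(B_r)=\tr(S_{2r})/S_{0r}=\big((d+\nu)-\nu S_{0r}\big)/S_{0r}$, where $\tr(S_{2r})=(d+\nu)-\nu S_{0r}$ is exactly the relation already used in \eqref{def_1}.

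Writing $s\coloneqq S_{0r}>0$ and $\ell(s)\coloneqq\big((d+\nu)-\nu s\big)/d$, the two bounds combine to $L(\Sigma_{r+1})-L(\Sigma_r)\le \nu\log s + d\log\ell(s)$, so it remains to show $\ell(s)\le s^{-\nu/d}$. This is the crux: $\ell$ is precisely the tangent line at $s=1$ to the strictly convex function $h(s)=s^{-\nu/d}$ (they agree in value and first derivative there), so convexity yields $\ell(s)\le h(s)$ for all $s>0$, with equality iff $s=1$. This nonlinear scaling step is the main obstacle, and it is exactly where the \emph{scaled} Picard update matters: a plain majorize--minimize argument fails because the update is only a scaled critical point of the natural surrogate. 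For the equality analysis I would trace the bounds backwards: equality forces AM--GM equality $B_r=cI$ and, for $\nu>0$, $s=1$; then $\tr(B_r)=d$ pins $c=1$, so $\Sigma_{r+1}=\Sigma_r$, while the converse is immediate. For $\nu=0$ the combined bound equals $0$ for every $B_r=cI$ since $L_0$ is scale invariant, so equality means $\Sigma_{r+1}=c\Sigma_r$; because $x_i\in\SP^{d-1}$ forces every iterate to have trace $1$, again $c=1$.
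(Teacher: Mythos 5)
Your proof is correct, and while it shares the paper's first two steps --- Jensen's inequality applied to \eqref{zielfkt} and the computation showing that the weighted average inside the logarithm collapses to $S_{0r}$, so that everything reduces to the key inequality $S_{0r}^{\nu}\,|S_{2r}| \le 1$ --- your treatment of that key inequality is genuinely different. The paper proves it by a constrained matrix optimization: it maximizes $g(X) = \left(\frac{d+\nu}{\nu} - \frac{1}{\nu}\tr(X)\right)^{\nu} |X|$ over $\SPD(d)$, computing the gradient to locate the critical point $X=I$ and the Hessian to verify it is a maximizer, together with a boundary/compactness argument for global optimality. You instead split off the determinant by AM--GM on the eigenvalues of $B_r$, namely $\log|B_r| \le d\log\bigl(\tr(B_r)/d\bigr)$, which collapses the matrix problem to the scalar inequality $\ell(s)\le s^{-\nu/d}$, and you recognize $\ell$ as the tangent line at $s=1$ of the convex function $s\mapsto s^{-\nu/d}$. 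This is more elementary --- no matrix calculus, no Hessian, no compactness argument --- and it makes the equality case transparent: tightness of AM--GM forces $B_r = cI$, and strict convexity (for $\nu>0$) forces $s=1$, whence $\tr(B_r)=d$ gives $c=1$. A further point in your favor: for $\nu=0$ your equality analysis is more careful than the paper's. There, equality only forces $S_{2r}=I$, i.e. $\Sigma_{r+1} = \Sigma_r/S_{0r}$, which by scale invariance of $L_0$ need not equal $\Sigma_r$ for arbitrary nonzero $x_i$; the ``if and only if'' in the lemma genuinely requires the normalization $x_i\in\SP^{d-1}$ (stated in the text preceding the convergence analysis, not in the lemma itself), under which all iterates have trace $1$ and $S_{0r}=1$ follows. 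You make this explicit, whereas the paper handles both cases and the equality statement with a one-line appeal to \eqref{def_1}.
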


\begin{proof} We consider $\nu > 0$. The proof for $L_0$ follows exactly the same lines with $\nu = 0$.
By concavity of the logarithm and \eqref{zielfkt} we have
\begin{align*}
L(\Sigma_{r+1})- L(\Sigma_{r})
&\leq  (d+\nu)\log\Biggl(\underbrace{\sum_{i=1}^n w_i \frac{\nu 
+ \delta_{i,r+1}}{ \nu 
+ \delta_{i,r}} \frac{|\Sigma_{r+1}|^{\frac{1}{d+\nu}}}{|\Sigma_r|^{\frac{1}{d+\nu}}}}_{=\Upsilon}\Biggr),
\end{align*}	
so that  it suffices to show that $\Upsilon\leq 1$.
Using  properties of the determinant it holds 
\begin{equation*}
\frac{|\Sigma_{r+1}|^{\frac{1}{d+\nu}}}{|\Sigma_r|^{\frac{1}{d+\nu}}} 
= \frac{\left|R_r \frac{S_{2r}}{S_{0r}} {R_r^\tT}\right|^{\frac{1}{d+\nu}}}{|R_r {R_r^\tT}|^{\frac{1}{d+\nu}}} 
= S_{0r}^{-\frac{d}{d+\nu}}|S_{2r}|^{\frac{1}{d+\nu}}.
\end{equation*}
Next, we consider the term 
\begin{equation*}
\sum_{i=1}^n w_i \frac{\nu + \delta_{i,r+1}}{ \nu + \delta_{i,r}} 
= \sum_{i=1}^n w_i \frac{\delta_{i,r+1}}{ \nu + \delta_{i,r}} + \frac{\nu }{d+\nu}S_{0r}.
\end{equation*}
Using 
\begin{align*}
\delta_{i,r+1} = \tr\Bigl(	x_i^\tT \Sigma_{r+1}^{-1}\, x_i\Bigr)  
 = \tr\left( x_i^\tT \, R_r^{-\tT} S_{0r} S_{2r}^{-1} R_r^{-1} \, x_i \right)
 = S_{0r}\tr\left(S_{2r}^{-1} R_r^{-1} \, x_i x_i^\tT \, R_r^{-\tT}\right)
\end{align*}
and the linearity of the trace, the sum simplifies to 
\begin{align*}
\sum_{i=1}^n w_i \frac{\delta_{i,r+1}}{  \nu+\delta_{i,r}} 
& = \sum_{i=1}^n w_i \frac{\tr\left(\delta_{i,r+1}\right)}{ \nu+\delta_{i,r}}
= {S_{0r}} \sum_{i=1}^n w_i \frac{\tr\left(S_{2r}^{-1} R_r^{-1} \, x_i x_i^\tT \, R_r^{-\tT}\right)}{ \nu+\delta_{i,r}}\\
& =  {S_{0r}}\tr\Biggl(S_{2r}^{-1}
\underbrace{\sum_{i=1}^n w_i \frac{\left( R_r^{-1} \, x_i x_i^\tT \, R_r^{-\tT}\right)}{  \nu+\delta_{i,r}}}_{=\frac{1}{d+\nu}S_{2r}}\Biggr)\\
& =  \frac{1}{d+\nu}S_{0r}  \tr(I) =  \frac{d}{d+\nu}S_{0r} .
\end{align*}
Thus, we obtain
\begin{align*}
\Upsilon &= \sum_{i=1}^n w_i \frac{\nu + \delta_{i,r+1}}{ \nu + \delta_{i,r}} 
\frac{|\Sigma_{r+1}|^{\frac{1}{d+\nu}}}{|\Sigma_r|^{\frac{1}{d+\nu}}}
 = S_{0r}S_{0r}^{-\frac{d}{d+\nu}} |S_{2r}|^{\frac{1}{d+\nu}} 
=  \left(S_{0r}^\nu \, |S_{2r}|\right)^{\frac{1}{d+\nu}}.
\end{align*}
We have $\nu S_{0r} + \tr(S_{2r}) = d + \nu $.
(If $\nu = 0$ and $n > d$, we are ready here since $\tr(S_{2r}) = d$ implies by the arithmetic-geometric mean property that
$|S_{2r}| \le (d/n)^n$.)
For $\nu > 0$, 
we can express $S_{0r}$ in terms of $\tr(S_{2r})$ as
\begin{equation*}
S_{0r} = \frac{d+\nu }{\nu} - \frac{1}{\nu}\tr(S_{2r}).
\end{equation*}
Next we consider maximizing the  function 
\begin{equation*}
g\colon \SPD(d)\to \R,\qquad g(X) = \left(\frac{d+\nu }{\nu} - \frac{1}{\nu}\tr(X)\right)^\nu |X|
\end{equation*}
under the constraint $0\leq \tr(X)\leq {d+\nu}$. 
Note that for $X\in \SPD(d)$ we always have $\tr(X),|X|>0$. 
Further, if $\tr(X) = 0$ or $\tr(X)=d+\nu$, 
we set $g(X) = 0$. 
Since $g(I) = 1$, the maximum is not attained at the boundary, 
but inside the open set. The derivative of $g$ with respect to $X$ is given by
\begin{align*}
\nabla g(X) &=- \nu\left(\frac{d+\nu }{\nu} - \frac{1}{\nu}\tr(X)\right)^{\nu-1} \frac{1}{\nu }I|X| 
+ \left(\frac{d+\nu }{\nu} - \frac{1}{\nu}\tr(X)\right)^\nu |X|X^{-1}\\
&  = \left(\frac{d+\nu }{\nu} - \frac{1}{\nu}\tr(X)\right)^{\nu-1} |X|\Biggl[\left(\frac{d+\nu }{\nu} 
- \frac{1}{\nu}\tr(X)\right) X^{-1} - I\Biggr].
\end{align*}
The necessary condition for a critical point $\hat{X}$ of $g$ reads as
\begin{equation*}
\left(\frac{d+\nu}{\nu}-\frac{1}{\nu}\tr(\hat{X})\right) I =  \hat{X}. 
\end{equation*}
To verify that $\hat{X} =I$ is a maximizer, 
we have a look at the Hessian $\nabla^2 g$ of $g$ and show that it is negative definite for $\hat{X}=I$. 
We compute
\begin{align*}
D(\nabla g)(X)[H]
& = |X|\left(\frac{d+\nu}{\nu} - \frac{1}{\nu}\tr(|X|)\right)^{\nu-2} \left[ \frac{\nu-1}{\nu} \tr(H)I \right.\\
& \;\;\;\; - \left(\frac{d+\nu}{\nu} - \frac{1}{\nu}\tr(|X|)\right)\left(\tr(X^{-1}H)I + \tr(H)X^{-1}\right)\\
& \;\;\;\; \left.+ \left(\frac{d+\nu}{\nu} - \frac{1}{\nu}\tr(|X|)\right)^{2}\tr(X^{-1}H)X^{-1} - X^{-1}HX^{-1} \right]
\end{align*}
and further
\begin{align*}
\langle \nabla^2g(X)[H],H\rangle 
& = |X|\left(\frac{d+\nu}{\nu} - \frac{1}{\nu}\tr(|X|)\right)^{\nu-2} \left[ \frac{\nu-1}{\nu} \tr(H)^2 \right.	\\ 	
& \;\;\;\; 
-2\left(\frac{d+\nu}{\nu} - \frac{1}{\nu}\tr(|X|)\right)\tr(H)\tr(X^{-1}H) \\
& \;\;\;\; \left.+ \left(\frac{d+\nu}{\nu} - \frac{1}{\nu}\tr(|X|)\right)^2 \bigl(\tr(X^{-1}H)^2 - \tr((X^{-1}H)^2)\bigr)\right].
\end{align*}
At the critical point $\hat{X}=I$ we obtain
\begin{align*}
\langle \nabla^2g(I)[H],H\rangle & = 	 \frac{\nu-1}{\nu} \tr(H)^2 - 2 \tr(H)^2 + \tr(H)^2 - \tr(H^2)\\
& = -\frac{1}{\nu}\tr(H)^2 - \tr(H^2)<0,	
\end{align*}
so that $\hat{X}=I$ is indeed a maximizer. The corresponding maximum of $g$ is given by 
$g(I) = \left(\frac{d+\nu}{\nu}-\frac{1}{\nu }\tr(I)\right)^\nu  |I| = 1 $ and therewith finally
\begin{equation*}
\Upsilon = \left(S_{0r}^\nu|S_{2r}|\right)^{\frac{1}{d+1}} = g(S_{2r})^{\frac{1}{d+1}} \leq 1.
\end{equation*}	 
By \eqref{def_1}, we have equality in  the theorem if and only if $\Sigma_{r+1} = \Sigma_r$.
\end{proof}

Next, we analyze the difference of two iterates for fixed $\Sigma$. 

\begin{Lemma}\label{Theo:descent_mu}
For fixed $\Sigma\in \SPD(d)$ and $\nu>0$,  let $\{\mu_r\}_{r\in \N}$ be defined by Algorithm~\ref{alg:myriad_mult_general}. Then it holds
\begin{equation*}
L(\mu_{r+1},\Sigma)- L(\mu_{r},\Sigma)\leq 0
\end{equation*}
with equality if and only if $\mu_{r+1} = \mu_r$.
\end{Lemma}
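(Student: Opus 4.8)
The plan is to follow the same concavity strategy as in Lemma~\ref{Theo:descent_Sigma}. Since $\Sigma$ is held fixed, the term $\log\abs{\Sigma}$ in the definition \eqref{L} of $L$ does not change between iterates, so the increment reduces to
\[
L(\mu_{r+1},\Sigma) - L(\mu_r,\Sigma) = (d+\nu)\sum_{i=1}^n w_i \log\frac{\nu + \delta_{i,r+1}}{\nu + \delta_{i,r}}.
\]
By concavity of the logarithm, i.e.\ Jensen's inequality for the weights $w_i \in \mathring\Delta_n$, the right-hand side is at most $(d+\nu)\log\Upsilon$ with $\Upsilon \coloneqq \sum_{i=1}^n w_i \frac{\nu+\delta_{i,r+1}}{\nu+\delta_{i,r}}$. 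Hence it suffices to show $\Upsilon \le 1$, with equality exactly when $\mu_{r+1}=\mu_r$.

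To estimate $\Upsilon$, I would pass to the coordinates $y_i \coloneqq R^{-1}(x_i - \mu_r)$ for the fixed Cholesky factor $\Sigma = RR^\tT$, so that $\delta_{i,r} = y_i^\tT y_i$, and set $m \coloneqq R^{-1}(\mu_{r+1}-\mu_r)$, which by the definition of $\mu_{r+1}$ and of $S_{0r},S_{1r}$ in \eqref{S0_normalized} and \eqref{S1_normalized} equals $S_{1r}/S_{0r}$. The decisive structural fact is that the definition of $\mu_{r+1}$ makes $m$ the weighted barycentre of the $y_i$, namely $\sum_{i=1}^n \frac{w_i}{\nu+\delta_{i,r}}\,y_i = A_r\, m$ with $A_r \coloneqq \sum_{i=1}^n \frac{w_i}{\nu+\delta_{i,r}} > 0$. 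Since $R^{-1}(x_i-\mu_{r+1}) = y_i - m$, this yields $\delta_{i,r+1} = \delta_{i,r} - 2\,y_i^\tT m + \norm{m}{2}^2$.

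Substituting this expansion into $\Upsilon$ and using $\sum_i w_i = 1$ gives $\Upsilon = 1 + \sum_{i=1}^n \frac{w_i}{\nu+\delta_{i,r}}\bigl(-2\,y_i^\tT m + \norm{m}{2}^2\bigr)$. By the barycentre identity the cross term equals $-2\,(A_r m)^\tT m = -2A_r\norm{m}{2}^2$, while the quadratic term equals $A_r\norm{m}{2}^2$, so the two combine to $\Upsilon = 1 - A_r\norm{m}{2}^2$. As $A_r > 0$, this proves $\Upsilon \le 1$, with equality if and only if $m = 0$, i.e.\ $\mu_{r+1} = \mu_r$; feeding this back through the Jensen step gives the descent inequality and its equality characterization.

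I expect the only delicate point to be the algebraic cancellation in the last paragraph, which works precisely because $\mu_{r+1}$ is the weighted mean, so that the first-order term is exactly twice the quadratic one and $\Upsilon$ collapses to $1 - A_r\norm{m}{2}^2$. For the equality statement I would argue both directions explicitly: $\mu_{r+1}=\mu_r$ forces $\delta_{i,r+1}=\delta_{i,r}$ and hence equality throughout, whereas a vanishing increment forces the Jensen bound to vanish, hence $\Upsilon = 1$ and $\norm{m}{2} = 0$.
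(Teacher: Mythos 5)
Your proposal is correct and follows essentially the same route as the paper's proof: the Jensen/concavity reduction to $\Upsilon \le 1$, followed by expanding $\delta_{i,r+1}$ around $\mu_r$ and exploiting that $\mu_{r+1}$ is the weighted barycentre so that the cross and quadratic terms collapse to $\Upsilon = 1 - A_r\norm{m}{2}^2$. Your quantities $A_r$ and $m$ are just rescalings of the paper's $S_{0r}$ and $S_{1r}$ (indeed $A_r\norm{m}{2}^2 = \frac{1}{d+\nu}\norm{S_{1r}}{2}^2/S_{0r}$), so the two computations are identical up to notation.
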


\begin{proof}
By concavity of the logarithm and \eqref{zielfkt} we have 
\begin{align*}
L(\mu_{r+1},\Sigma)- L(\mu_{r},\Sigma) 
&= (d+\nu)\sum_{i=1}^n w_i \log\Biggl(\frac{ \nu+\delta_{i,r+1}}{  \nu+\delta_{i,r}}\Biggr)\\
& \leq (d+\nu)\log\Biggl(\underbrace{\sum_{i=1}^n w_i\frac{ \nu+\delta_{i,r+1}}{\nu+\delta_{i,r}}}_{=\Upsilon}\Biggr),
\end{align*}	
so that  it suffices to show that $\Upsilon\leq 1$.
We  compute
\begin{small}
	\begin{align*}
\Upsilon& = \sum_{i=1}^n w_i\frac{ \nu+(x_i-\mu_{r}+\mu_{r}-\mu_{r+1})^\tT \Sigma^{-1}(x_i-\mu_{r}+\mu_{r}-\mu_{r+1})}{\nu+\delta_{i,r}}\\
& =  \sum_{i=1}^n w_i\frac{\nu+\delta_{i,r}}{\nu+\delta_{i,r}}
\\
& \quad + \frac{2\langle R^{-1}(x_i-\mu_{r}), (\overbrace{R^{-1}(\mu_r-\mu_{r+1})}^{-\frac{S_{1r}}{S_{0r}}})\rangle
+ (\mu_{r}-\mu_{r+1})^\tT \Sigma^{-1}(\mu_{r}-\mu_{r+1})}{\nu+\delta_{i,r}}\\
& = 1 -2\sum_{i=1}^n w_i\frac{\left\langle R^{-1}(x_i-\mu_{r}), \frac{S_{1r}}{S_{0r}}\right\rangle}{\nu+\delta_{i,r}} 
+ \sum_{i=1}^n w_i\frac{ \norm{\frac{S_{1r}}{S_{0r}}}{2}^2}{ \nu+\delta_{i,r}}\\
& = 1-2\left\langle \underbrace{\sum_{i=1}^n w_i\frac{ R^{-1}(x_i-\mu_{r})}{ \nu+\delta_{i,r}} }_{=\frac{1}{d+\nu}S_{1r}} ,
\frac{S_{1r}}{S_{0r}}  \right\rangle + \norm{\frac{S_{1r}}{S_{0r}}}{2}^2 
\underbrace{\sum_{i=1}^n w_i\frac{ 1}{ \nu+\delta_{i,r}}}_{=\frac{1}{d+\nu}S_{0r}}\\
& = 1- \frac{1}{d+\nu}\frac{\norm{S_{1r}}{2}^2}{S_{0r}}\leq 1,
\end{align*}
\end{small}
with equality if and only if $S_{1r}=0$, that is, $\mu_{r+1}=\mu_r$ and $\mu_r$ is a critical point of $L(\cdot,\Sigma)$. 
\end{proof}

Combining the results of Lemma~\ref{Theo:descent_mu} and~\ref{Theo:descent_Sigma} we obtain the following lemma.
\begin{Lemma}\label{Theo:descent_joint}
For $\nu>0$, 	 let $\{\mu_r,\Sigma_r\}_{r\in \N}$ be defined by Algorithm~\ref{alg:myriad_mult_general}. Then it holds
\begin{equation*}
	L(\mu_{r+1},\Sigma_{r+1})- L(\mu_{r},\Sigma_{r})\leq 0
\end{equation*}
with equality if and only if $(\mu_{r+1},\Sigma_{r+1}) = (\mu_r,\Sigma_r)$.
\end{Lemma}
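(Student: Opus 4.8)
The plan is to prove the joint descent by a single direct computation that fuses the two preceding lemmas, rather than by chaining them as coordinate-descent steps. A naive splitting fails here precisely because Algorithm~\ref{alg:myriad_mult_general} is the \emph{Jacobi} variant: $\Sigma_{r+1}$ is built from $\mu_r$ while $\mu_{r+1}$ is built from $\Sigma_r$, so neither intermediate pair $(\mu_{r+1},\Sigma_r)$ nor $(\mu_r,\Sigma_{r+1})$ arises as a genuine one-variable update, and Lemmata~\ref{Theo:descent_mu} and~\ref{Theo:descent_Sigma} cannot simply be concatenated. Instead, I would start from the exact difference formula~\eqref{zielfkt} and apply Jensen's inequality (concavity of the logarithm with weights $w_i$) to reduce the claim to showing $\Upsilon\le 1$, where $\Upsilon = \sum_{i=1}^n w_i \frac{\nu+\delta_{i,r+1}}{\nu+\delta_{i,r}}\,\frac{|\Sigma_{r+1}|^{1/(d+\nu)}}{|\Sigma_r|^{1/(d+\nu)}}$.

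The determinant ratio is identical to the one in Lemma~\ref{Theo:descent_Sigma}, namely $S_{0r}^{-d/(d+\nu)}|S_{2r}|^{1/(d+\nu)}$. The genuinely new ingredient is $\delta_{i,r+1}=(x_i-\mu_{r+1})^\tT\Sigma_{r+1}^{-1}(x_i-\mu_{r+1})$, where now \emph{both} arguments have been updated. Writing $x_i-\mu_{r+1}=(x_i-\mu_r)-(\mu_{r+1}-\mu_r)$, using $\mu_{r+1}-\mu_r=R_r S_{1r}/S_{0r}$ together with $\Sigma_{r+1}^{-1}=R_r^{-\tT}S_{0r}S_{2r}^{-1}R_r^{-1}$, and passing to the coordinates $y_i=R_r^{-1}(x_i-\mu_r)$, I would expand the quadratic form and insert the defining identities $\sum_i w_i\, y_iy_i^\tT/(\nu+\delta_{i,r})=S_{2r}/(d+\nu)$, $\sum_i w_i\, y_i/(\nu+\delta_{i,r})=S_{1r}/(d+\nu)$ and $\sum_i w_i/(\nu+\delta_{i,r})=S_{0r}/(d+\nu)$ already used in the two previous proofs. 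The cross terms between the $\mu$- and $\Sigma$-updates then collapse and should yield $\sum_{i=1}^n w_i \frac{\nu+\delta_{i,r+1}}{\nu+\delta_{i,r}} = S_{0r}-\frac{1}{d+\nu}\,S_{1r}^\tT S_{2r}^{-1}S_{1r}$, so that $\Upsilon=\bigl(S_{0r}-\tfrac{1}{d+\nu}S_{1r}^\tT S_{2r}^{-1}S_{1r}\bigr)S_{0r}^{-d/(d+\nu)}|S_{2r}|^{1/(d+\nu)}$.

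The remainder is bookkeeping. Since $S_{2r}\in\SPD(d)$, the correction term satisfies $S_{1r}^\tT S_{2r}^{-1}S_{1r}\ge 0$, whence $\Upsilon\le (S_{0r}^\nu|S_{2r}|)^{1/(d+\nu)}=g(S_{2r})^{1/(d+\nu)}$ with $g$ the function from the proof of Lemma~\ref{Theo:descent_Sigma}; the maximization argument there gives $g(S_{2r})\le g(I)=1$, so $\Upsilon\le 1$ and the descent follows. For the equality case, $L(\mu_{r+1},\Sigma_{r+1})=L(\mu_r,\Sigma_r)$ forces $\Upsilon=1$ through the chain, and this requires \emph{both} $S_{1r}^\tT S_{2r}^{-1}S_{1r}=0$, i.e.\ $S_{1r}=0$, and $g(S_{2r})=1$, i.e.\ $S_{2r}=I$ (the unique maximizer of $g$). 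The former gives $\mu_{r+1}=\mu_r$; the latter gives $\tr(S_{2r})=d$, hence $S_{0r}=1$ and $\Sigma_{r+1}=R_r(S_{2r}/S_{0r})R_r^\tT=\Sigma_r$. The converse implication is immediate.

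I expect the main obstacle to be the expansion of $\delta_{i,r+1}$ with both arguments updated simultaneously, and in particular verifying that the cross terms arising from the Jacobi coupling recombine into the single, manifestly nonnegative quadratic form $\frac{1}{d+\nu}S_{1r}^\tT S_{2r}^{-1}S_{1r}$. This is the crux: a priori the simultaneous update could leave a sign-indefinite remainder that would break the bound $\Upsilon\le g(S_{2r})^{1/(d+\nu)}$, so confirming that the $\mu$-correction enters with the correct sign and $S_{2r}^{-1}$-weighting is exactly what makes the Jacobi scheme monotone.
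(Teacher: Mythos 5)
Your proposal follows essentially the same route as the paper's proof: reduce to $\Upsilon\le 1$ via concavity of the logarithm, reuse the determinant-ratio identity $|\Sigma_{r+1}|^{1/(d+\nu)}/|\Sigma_r|^{1/(d+\nu)}=S_{0r}^{-d/(d+\nu)}|S_{2r}|^{1/(d+\nu)}$, expand $\delta_{i,r+1}$ around $(\mu_r,\Sigma_r)$ so that the Jacobi cross terms recombine into $-\tfrac{1}{d+\nu}S_{1r}^\tT S_{2r}^{-1}S_{1r}$, and conclude with the bound $\Upsilon\le\bigl(S_{0r}^{\nu}|S_{2r}|\bigr)^{1/(d+\nu)}=g(S_{2r})^{1/(d+\nu)}\le g(I)^{1/(d+\nu)}=1$; the identity $\sum_i w_i\frac{\nu+\delta_{i,r+1}}{\nu+\delta_{i,r}}=S_{0r}-\tfrac{1}{d+\nu}S_{1r}^\tT S_{2r}^{-1}S_{1r}$ you anticipate is exactly what the paper computes. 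Your treatment of the equality case (forcing $S_{1r}=0$ and $S_{2r}=I$, hence $S_{0r}=1$) is in fact spelled out more explicitly than in the paper, but it is the same argument in spirit, not a different approach.
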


\begin{proof}
By concavity of the logarithm and \eqref{zielfkt} we have  
\begin{align*}
L(\mu_{r+1},\Sigma_{r+1})- L(\mu_{r},\Sigma_{r})
& \leq(d+\nu)\log\Biggl(\underbrace{\sum_{i=1}^n w_i \frac{\nu + \delta_{i,r+1}}{ \nu + \delta_{i,r}} \frac{|\Sigma_{r+1}|^{\frac{1}{d+\nu}}}{|\Sigma_r|^{\frac{1}{d+\nu}}}}_{=\Upsilon}\Biggr),
\end{align*}
and  it suffices to show that $\Upsilon\leq 1$.
As in the proof of Lemma~\ref{Theo:descent_Sigma}	we have
\begin{equation*}
\frac{|\Sigma_{r+1}|^{\frac{1}{d+\nu}}}{|\Sigma_r|^{\frac{1}{d+\nu}}} =  S_{0r}^{-\frac{d}{d+\nu}}|S_{2r}|^{\frac{1}{d+\nu}}.
\end{equation*}	
Next, we consider the term 
\begin{equation*}
\sum\limits_{i=1}^n w_i \frac{\nu + \delta_{i,r+1}}{ \nu + \delta_{i,r}} = \sum\limits_{i=1}^n w_i \frac{ \delta_{i,r+1}}{ \nu + \delta_{i,r}} +  \frac{\nu }{d + \nu}S_{0r}.
\end{equation*}
Combining the computations in the proofs of Lemma~\ref{Theo:descent_mu} and Lemma~\ref{Theo:descent_Sigma}, we get 
\begin{small}
\begin{align*}
&	\sum\limits_{i=1}^n w_i \frac{ \delta_{i,r+1}}{ \nu + \delta_{i,r}} \\
& = \sum_{i=1}^n w_i \frac{\delta_{i,r} + 2\langle R_{r+1}^{-1}(x_i-\mu_r),R_{r+1}^{-1}(\mu_r-\mu_{r+1})\rangle 
+ \delta_{i,r+1}}{ \nu+\delta_{i,r}}\\
& = \sum\limits_{i=1}^n w_i \frac{S_{0r}\tr\left(S_{2r}^{-1} 
R_r^{-1}(x_i-\mu_r)(x_i-\mu_r)^\tT R_r^{-\tT}\right)}{ \nu+\delta_{i,r}}-2 
\sum_{i=1}^n w_i \frac{(x_i-\mu_r)^\tT R_r^{-\tT}S_{0r} S_{2r}^{-1} R_r^{-1} R_r\frac{S_{1r}}{S_{0r}}}{ \nu+\delta_{i,r}}\\
& \quad + \sum_{i=1}^n w_i \frac{\frac{S_{1r}^\tT}{S_{0r}}R_r^\tT R_r^{-\tT} S_{0r}S_{2r}^{-1} 
R_r^{-1} R_r\frac{S_{1r}}{S_{0r}}}{ \nu+\delta_{i,r}}\\
& = \frac{d}{d+\nu} S_{0r} - \frac{2}{d+\nu} S_{1r}^\tT S_{2r}^{-1} S_{1r} 
+  \frac{ S_{1r}^\tT S_{2r}^{-1} S_{1r}}{S_{0r}}\sum_{i=1}^n w_i \frac{1}{  \nu+\delta_{i,r}}\\
& = \frac{d}{d+\nu}S_{0r} -\frac{1}{d+\nu} S_{1r}^\tT S_{2r}^{-1} S_{1r}.
\end{align*}
\end{small}
Since $S_{2r}\in \SPD(d)$ and consequently also $S_{2r}^{-1}\in \SPD(d)$,  we  obtain
\begin{align*}
\Upsilon
& = \left(\left(\frac{d}{d+\nu}+\frac{\nu}{d+\nu}\right)S_{0r} 
-\frac{1}{d+\nu}\underbrace{S_{1r}^\tT S_{2r}^{-1} S_{1r}}_{\geq 0} \right)S_{0r}^{-\frac{d}{d+\nu}}|S_{2r}|^{\frac{1}{d+\nu}}
\leq \left(S_{0r}^\nu|S_{2r}|\right)^{\frac{1}{d+\nu}}\leq 1. \; \qedhere
\end{align*}
\end{proof}

\begin{Theorem}[Convergence of Algorithm~\ref{alg:myriad_mult_general}]\label{Theo:joint_convergence}
	\begin{enumerate}
		\item Let $\nu\ge 1$ and $x_i\in\R^d$, $i=1,\ldots,n$ and $w \in \mathring \Delta_n$ fulfill Assumption~\ref{Ass:affin_ind}.
		Then the sequence  $\{(\mu_r,\Sigma_r)\}_{r\in \N}$ generated by Algorithm~\ref{alg:myriad_mult_general} converges to the minimizer of $L$.
		\item Let $\nu>0$, $\mu$ fixed and $x_i\in\R^d$, $i=1,\ldots,n$ and $w \in \mathring \Delta_n$ fulfill Assumption~\ref{Ass:lin_ind}.
		Then the sequence  $\{\Sigma_r\}_{r\in \N}$ generated by Algorithm~\ref{alg:myriad_mult_general} converges to the minimizer of $L$.
		\item  Let $\nu=0$, $\mu = 0$ and $x_i \in \SP^{d-1}$, $i=1,\ldots,n$ be pairwise different non antipodal points.
		Then the sequence  $\{\Sigma_r\}_{r\in \N}$ generated by Algorithm~\ref{alg:myriad_mult_general} converges to the minimizer of $L_0$
		with trace 1.		
	\end{enumerate}
\end{Theorem}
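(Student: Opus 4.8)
The plan is to use the standard convergence scheme for monotone descent algorithms: first show that the objective decreases along the iterates and is coercive on the relevant domain, so that the iterates stay in a compact set; then characterize each accumulation point as a fixed point of the iteration map, which is precisely a critical point; and finally invoke uniqueness of the critical point to upgrade subsequential convergence to convergence of the whole sequence. For the joint case (i), Lemma~\ref{Theo:descent_joint} shows that $r\mapsto L(\mu_r,\Sigma_r)$ is non-increasing, and by Theorem~\ref{thm:main} (for $\nu>1$) resp.\ Theorem~\ref{thm:main_0} (for $\nu=1$) the function $L$ has a minimizer, so the values $L(\mu_r,\Sigma_r)$ are bounded below and converge to some $L^\ast$. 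The key point is that $L$ is coercive on $\R^d\times\SPD(d)$: via the reformulation \eqref{gleich} together with Lemma~\ref{Lem:higher_dim}, approaching the boundary of $\R^d\times\SPD(d)$ (i.e.\ $\|\mu\|\to\infty$ or an eigenvalue of $\Sigma$ tending to $0$ or $+\infty$) corresponds to the associated matrix $A$ approaching the boundary of $\SPD(d+1)$, where the coercivity estimate from the proof of Theorem~\ref{Theo:scatter_existence} (applied in dimension $d+1$) forces $\tilde L(A)=L(\mu,\Sigma)\to+\infty$. Hence the sublevel set $\{(\mu,\Sigma):L(\mu,\Sigma)\le L(\mu_0,\Sigma_0)\}$ is a compact subset of $\R^d\times\SPD(d)$ containing the whole sequence, which therefore has a convergent subsequence $(\mu_{r_k},\Sigma_{r_k})\to(\mu^\ast,\Sigma^\ast)$ with $(\mu^\ast,\Sigma^\ast)\in\R^d\times\SPD(d)$.

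Let $F$ denote the update map of Algorithm~\ref{alg:myriad_mult_general}, which is continuous on this compact set since the denominators $S_{0r}$ stay bounded away from $0$ there. As $L$ is continuous and the entire sequence of values converges to $L^\ast$, we get $L(F(\mu^\ast,\Sigma^\ast))=\lim_k L(\mu_{r_k+1},\Sigma_{r_k+1})=L^\ast=L(\mu^\ast,\Sigma^\ast)$. By the equality statement in Lemma~\ref{Theo:descent_joint} this forces $F(\mu^\ast,\Sigma^\ast)=(\mu^\ast,\Sigma^\ast)$, so $(\mu^\ast,\Sigma^\ast)$ is a fixed point of the iteration. Since the fixed-point equations \eqref{fp_mu}, \eqref{fp_Sigma} are equivalent to the critical point conditions \eqref{mult_cond_a}, \eqref{mult_cond_S}, the limit is a critical point of $L$, which by Theorem~\ref{thm:main}/\ref{thm:main_0} is unique and equals the minimizer. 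As the sequence lies in a compact set and every convergent subsequence has this same limit, the entire sequence converges to the minimizer of $L$.

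Cases (ii) and (iii) follow the same template with the scatter-only ingredients. In case (ii) one replaces Lemma~\ref{Theo:descent_joint} by Lemma~\ref{Theo:descent_Sigma}, coercivity by Theorem~\ref{Theo:scatter_existence}, and uniqueness by Theorem~\ref{Theo:scatter_uniqueII}. In case (iii) the function $L_0$ is only scale-invariant and hence \emph{not} coercive on $\SPD(d)$; however, the update formula with $\nu=0$ and $x_i\in\SP^{d-1}$ preserves the normalization $\tr(\Sigma_r)=1$, so one restricts to the compact slice $D=\{\Sigma\in\SPD(d):\tr\Sigma=1\}$, on which $L_0$ blows up towards singular matrices exactly as in the proof of Theorem~\ref{Theo:scatter_unique_0}. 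Combining this with the $\nu=0$ equality case of Lemma~\ref{Theo:descent_Sigma} and the uniqueness-up-to-scaling of Theorem~\ref{Prop:scatter_nonunique}, which singles out a unique trace-one critical point, the same accumulation-point argument yields convergence to that minimizer.

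The main obstacle is the passage from ``a convergent subsequence exists'' to ``the whole sequence converges'', which relies on two distinct ingredients: the coercivity estimate, which prevents the iterates from approaching the boundary of $\SPD(d)$ or escaping to infinity, and, crucially, the sharp equality characterization in the descent lemmas, which is what identifies every accumulation point as a genuine critical point rather than a mere stationary value of the objective. Establishing coercivity in the joint case is somewhat delicate because it must be routed through the $(d+1)$-dimensional reformulation \eqref{gleich}, and in the case $\nu=0$ one must first confirm that the trace-one normalization is an invariant of the iteration before compactness of the slice $D$ can be exploited.
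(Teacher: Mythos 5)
Your proposal is correct and follows the same global strategy as the paper's own proof: monotone descent via Lemma~\ref{Theo:descent_joint}, extraction of a convergent subsequence, identification of every accumulation point as a fixed point of the iteration map through continuity of $L$ and the update map together with the equality case of the descent lemma, and finally uniqueness of the critical point (Theorems~\ref{thm:main}, \ref{thm:main_0}, \ref{Theo:scatter_uniqueII}, \ref{Prop:scatter_nonunique}) to upgrade subsequential convergence to convergence of the whole sequence. The one place where you genuinely diverge is the compactness step. The paper argues boundedness of the iterates directly from the structure of the updates ($\mu_r$ is a convex combination of the samples, hence the terms $(x_i-\mu_r)(x_i-\mu_r)^\tT$ and therefore $\Sigma_{r+1}$ stay bounded) and then invokes continuity of $L$ and $T$ at the subsequential limit; this leaves implicit the possibility that the limit $\hat\Sigma$ is singular, i.e.\ lies on the boundary of $\SPD(d)$, where neither $L$ nor the iteration map is continuous. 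Your coercivity route — compactness of the sublevel set $\{L\le L(\mu_0,\Sigma_0)\}$ inside $\R^d\times\SPD(d)$, obtained from the blow-up estimates of Theorem~\ref{Theo:scatter_existence} applied in dimension $d+1$ via the reformulation \eqref{gleich} — closes exactly this gap, so your argument is in this respect more complete than the paper's. The price is a subtlety you only partially address: for $\nu=1$ the lifted parameter is $\tilde\nu=0$, so Theorem~\ref{Theo:scatter_existence} does not apply verbatim in dimension $d+1$, and one needs the $L_0$-type blow-up from the proof of Theorem~\ref{Theo:scatter_unique_0} on the slice of matrices \eqref{form_1} (you handle the analogous issue carefully in case (iii), where your observation that the $\nu=0$ iteration preserves $\tr\Sigma_r=1$ matches the paper's remark before the convergence analysis, but you gloss over it in case (i)). With that point patched, both proofs are sound and structurally identical.
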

%

\begin{proof}
We restrict our attention to (i), the other parts follows the same lines.
Let $\{(\mu_r,\Sigma_r)\}_{r\in \N}$ be the sequence of iterates generated by Algorithm~\ref{alg:myriad_mult_general}. 
Consider the mapping 
\begin{equation*}
T(\mu,\Sigma) 
= \left(\mu + \Sigma^{\frac{1}{2}} \frac{S_1(\mu,\Sigma)}{S_0(\mu,\Sigma) },  \Sigma^{\frac{1}{2}} \frac{S_1(\mu,\Sigma)}{S_0(\mu,\Sigma) } \Sigma^{\frac{1}{2}}\right).	
\end{equation*}
Then, according to~\eqref{fp_mu},~\eqref{fp_Sigma} and Theorem~\ref{thm:main}, 
$(\mu,\Sigma) = T(\mu,\Sigma)$ is a fixed point of $T$ if and only if it is the unique critical point/minimizer of $L$. 
Consider the case $(\mu_{r+1},\Sigma_{r+1}) \not = (\mu_r,\Sigma_r)$ for all $r \in \N$. 
We show  that the sequence $\{(\mu_r,\Sigma_r)\}_{r\in \N}$ is bounded: 
the update of $\mu_r$ is just a convex combination of the samples $x_1,\ldots,x_n$. 
By construction, $\Sigma_r$, 
is also a weighted average and the sequence remains bounded since the value
$\mu_r\in \conv\{x_1,\ldots,x_n\}$ stays bounded, 
$(x_i-\mu_r)(x_i-\mu_r)^\tT$, $i=1,\ldots,n$ is bounded as well, 
and consequently also $\Sigma_{r+1}$. 	
By Lemma~\ref{Theo:descent_joint}, we see that the sequence $L_r \coloneqq L(\mu_r,\Sigma_r)$ 
is a strictly decreasing, bounded below sequence such that it converges to some $\hat L$.
Further, $\{ (\mu_r,\Sigma_r)\}_{r\in \N}$ contains a convergent subsequence $\{ (\mu_{r_s},\Sigma_{r_s})\}_{s\in \N}$,
which converges to some $(\hat{\mu},\hat{\Sigma})$. 
By the continuity of $L$ and  $T$  we obtain
\begin{align*}
L (\hat{\mu},\hat{\Sigma}) 
&= \lim_{s\rightarrow \infty} L(\mu_{r_s}, \Sigma_{r_s}) = \lim_{s\rightarrow \infty} L_{r_s}=\lim_{s\rightarrow \infty} L_{r_s+1} \\
&= \lim_{s\rightarrow \infty} L (\mu_{r_s+1}, \Sigma_{r_s+1}) \\
&	= \lim_{s\rightarrow \infty} L\bigl(T(\mu_{r_s}, \Sigma_{r_s}) \bigr)
= L\bigl(T(\hat \mu,\hat \Sigma) \bigr).
\end{align*}
This implies $(\hat \mu,\hat \Sigma) = T(\hat \mu,\hat \Sigma)$, so that $(\hat \mu,\hat \Sigma)$ is a fixed point of $T$ and
consequently the critical point.
Since this point is unique, not only a subsequence, but the whole sequence $\bigl\{(\mu_r,\Sigma_r)\bigr\}_{r\in \N}$ 
converges to $(\hat \mu,\hat \Sigma)$, which finishes the proof.
\end{proof}

\subsection{Simulation Study}\label{sec:simu}
Next we evaluate the numerical performance, in particular the speed of convergence, of the proposed GMMF Algorithm~\ref{alg:myriad_mult_general}
compared to the EM algorithm.
Actually, the EM algorithm~\cite{LR95} in our notation reads as Algorithm \ref{alg:myriad_mult_general} except for the iteration with respect to
$\Sigma$ which is given by
\begin{equation*}
\Sigma_{r+1} 
=   \frac{1}{\nu + d} \sum\limits_{i=1}^{n} w_i \frac{ (x_i-\mu_r)(x_i-\mu_r)^\tT }{\nu + \delta_{i,r}}.
\end{equation*}
The convergence of the EM algorithm under quite general assumptions has been established in~\cite{Wu83}. However, it is well known that the EM algorithm might suffer from slow convergence, 
which is also what we observed in the  following Monte Carlo simulation: 
we draw $n=100$ i.i.d. samples of a $T_\nu(\mu,\Sigma)$ distribution for different degrees of freedom $\nu\in \{1,5,10,100\}$ 
and run Algorithm~\ref{alg:myriad_mult_general} respective the EM Algorithm to
compute the joint ML-estimate $(\hat{\mu},\hat{\Sigma})$. Both algorithms
are initialized with sample mean and sample covariance and we used 
the relative difference between two iterates  $(\mu_r,\Sigma_r)$ and $(\mu_{r+1},\Sigma_{r+1})$ as stopping criterion, 
that is 
\begin{equation*} 
\frac{\sqrt{\lVert \mu_{r+1}-\mu_r\rVert_2^2 
+ \lVert\Sigma_{r+1}-\Sigma_r\rVert_F^2}}{\sqrt{\lVert\mu_r\rVert_2^2 + \lVert\Sigma_r\rVert_F^2}}<10^{-6}.
\end{equation*}
This experiment is repeated $N = 10.000$ times
and afterward, we calculated the average number of
iterations $ \overline{\text{iter}}$  and $\overline{\text{iter}}_\text{EM}$ needed 
to reach the tolerance criterion together with their standard deviations. 
The results are given in Table~\ref{Tab:sampling_experiment}, 
where we chose $d=2$, $\mu = 0$  and different values for $\Sigma$. 
First, we notice that the average number of iterations is 
in general higher for the EM Algorithm, and further, 
it does merely not depend on $(\mu,\Sigma)$, but only on the degree of freedom $\nu$. 
Here, the smaller the value of $\nu$, the larger on the one hand the number of iterations for both algorithms, 
and on the other hand the larger the gain in speed of Algorithm~\ref{alg:myriad_mult_general} 
compared to the EM Algorithm. The fact that only very few iterations are needed for large $\nu$ 
can be explained by the fact that for $\nu \to \infty$ the Student-$t$ distribution converges to the normal distribution,  
so that on the one hand, the estimation becomes in some sense easier, and on the other hand, we can expect the initialization with sample mean 
and sample covariance matrix, which are the maximum likelihood estimates for the normal distribution, to be already close to the actual parameters to be estimated.
Additionally, we examined the speed of convergence for other location parameters $\mu$ as well as   fixing $\mu$ and estimating only $\Sigma$. 
Here, the results are qualitatively and quantitatively similar.

\begin{table*}[htbp]
\centering
\footnotesize
\caption{Comparison of GMMF Algorithm~\ref{alg:myriad_mult_general} and the EM Algorithm.}
\label{Tab:sampling_experiment}
\begin{tabular}{c|l|ll}
	$\Sigma$&	$\nu$	        & $\overline{\text{iter}}\pm \sigma(\overline{\text{iter}})$&  
	$\overline{\text{iter}}_\text{EM}\pm \sigma(\overline{\text{iter}}_\text{EM})$ \\ \cline{1-4}
	\multirow{5}{*}{$\begin{pmatrix}
		0.1 & 0 \\
		0 & 0.1
		\end{pmatrix}$ }& $1$ & $20.7582\pm 1.5430$ & $ 60.6318\pm 3.9313$\\
	& $2$ & $ 16.0843 \pm 1.1242 $ & $33.5389 \pm 2.0370 $ \\
	& $5$ & $11.166\pm 0.8121$ & $16.8973\pm 0.9948$ \\
	& $10$ & $8.5245\pm 0.6450$ & $11.1186\pm 0.6534$\\
	& $100$ & $4.1066\pm 0.3086$ & $ 4.9072\pm 0.2915$ \\\cline{1-4}
	\multirow{5}{*}{$\begin{pmatrix}
		1 & 0 \\
		0 & 1
		\end{pmatrix}$ }& $1$ & $20.3536\pm 1.5899$ & $ 60.8843\pm 3.9302$\\
	& $2$ & $ 15.7742\pm 1.1840  $ & $ 33.6515\pm 2.0373 $ \\
	& $5$ & $10.9528\pm 0.8513$  &  $16.9305\pm 0.9957$ \\
	& $10$ & $8.3487\pm 0.6646$ & $11.1186\pm 0.6534$\\
	& $100$ & $4.0654\pm 0.2472$ & $4.9040\pm 0.2953$ \\\cline{1-4}
	\multirow{5}{*}{$\begin{pmatrix}
		5 & 0 \\
		0 & 5
		\end{pmatrix}$ }& $1$ &  $20.2702\pm 1.6145$ & $60.9139\pm 3.9326$ \\
	& $2$ & $ 15.7136\pm 1.0099 $ & $ 33.6644\pm 2.0381  $ \\
	& $5$ & $10.9100\pm 0.8699$  & $16.9343\pm 0.9957$  \\
	& $10$ & $ 8.3181\pm 0.6738 $ & $11.1191\pm 0.6540$\\
	& $100$ & $4.0627\pm 0.2424$ & $4.9035\pm 0.2960$ \\\cline{1-4}
	\multirow{5}{*}{$\begin{pmatrix}
		10 & 0 \\
		0 & 10
		\end{pmatrix}$ }& $1$ & $20.2592\pm 1.6179$ & $ 28.0073\pm 2.0546$\\
	& $2$ & $15.7055 \pm 1.2136  $ & $ 33.6662\pm 2.0386 $ \\
	& $5$ &  $10.9050\pm 0.8725$ &  $16.9346\pm 0.9959$ \\
	& $10$ & $8.3137\pm 0.6757$ & $11.1195\pm 0.6537$\\
	& $100$ & $4.0623\pm 0.2417$ & $4.9036\pm 0.2958$ \\\cline{1-4}
	\multirow{4}{*}{$\begin{pmatrix}
		2 & -1 \\
		-1  & 2 
		\end{pmatrix}$ }& $1$ & $20.2091\pm 1.6384$ & $ 27.9290\pm 2.1314$\\
	& $2$ & $ 15.6265\pm 1.2344 $ & $ 33.6569\pm 2.0452 $ \\
	& $5$ &  $10.8407\pm 0.8841$ &  $16.9230\pm 0.9954$ \\
	& $10$ &  $8.2607\pm 0.6844$ &  $11.1092\pm 0.6534$ \\
	& $100$ & $4.0573\pm 0.2324$ & $4.8908\pm 0.3126 $ \\\cline{1-4}
\end{tabular}
\end{table*}

\subsection{ML Estimation of Wrapped Cauchy Distribution}

By the relation between the  wrapped Cauchy and projected normal distribution in Lemma~\ref{Prop:wrapped_Cauchy}, 
we can use our GMMF Algorithm~\ref{alg:myriad_mult_general} 
with $\mu_r\equiv\mu = 0$ for the ML estimation of the wrapped Cauchy distribution. 
By \eqref{rho} and \eqref{a}  and \eqref{nenner}, 
the iteration for $\Sigma$  in Algorithm~\ref{alg:myriad_mult_general} is of the form
\begin{align*}
\Sigma_{r+1} &= \begin{pmatrix}
\sigma_{11,r+1} & \sigma_{12,r+1}\\
\sigma_{12,r+1} & \sigma_{22,r+1}
\end{pmatrix} 
=  \frac{ \sum\limits_{i=1}^{n} w_i \frac{ x_i x_i^\tT }{x_i^\tT \Sigma_{r}^{-1}x_i}}{\sum\limits_{i=1}^{n} w_i \frac{ 1}{x_i^\tT \Sigma_{r}^{-1}x_i}} \\
&= 
\frac{ \sum\limits_{i=1}^{n} w_i \frac{1}{ \bigl(1-\xi_{1,r}\cos(2\phi_i) - \xi_{2,r} \sin(2\phi_i)\bigr) }
\begin{pmatrix}
		\cos^2(\phi_i) & \cos(\phi_i)\sin(\phi_i)\\
		\cos(\phi_i)\sin(\phi_i) & \sin^2(\phi_i)
		\end{pmatrix}
		}
		{\sum\limits_{i=1}^{n} w_i \frac{ 1}{\bigl(1-\xi_{1,r}\cos(2\phi_i) - \xi_{2,r} \sin(2\phi_i)\bigr)}}
\end{align*}
Using relations for trigonometric functions and $\theta_{i} = 2\phi_i$, we obtain
\begin{align*}
\zeta_{1,r+1} &= \frac{\sigma_{11,r+1} - \sigma_{22,r+1}}{ \sigma_{11,r+1} + \sigma_{22,r+1}}
=\frac{\sum\limits_{i=1}^n w_i 
\frac{\cos(\theta_i)}{1-\xi_{1,r}\cos(\theta_i)-\xi_{2,r}\sin(\theta_i)}}{\sum\limits_{i=1}^n w_i \frac{1}{1-\xi_{1,r}\cos(\theta_i)-\xi_{2,r}\sin(\theta_i)} }
,\\
\zeta_{2,r+1} &= \frac{2\sigma_{12,r+1}}{ \sigma_{11,r+1} + \sigma_{22,r+1}}
=
\frac{\sum\limits_{i=1}^n w_i 
\frac{\sin(\theta_i)}{1-\xi_{1,r}\cos(\theta_i)-\xi_{2,r}\sin(\theta_i)}}{\sum\limits_{i=1}^n w_i \frac{1}{1-\xi_{1,r}\cos(\theta_i)-\xi_{2,r}\sin(\theta_i)} }.
\end{align*}
This results in Algorithm~\ref{alg:wrapped_Cauchy}. From $\zeta_1$ and $\zeta_2$ we obtain the
desired estimation of parameters $(a,\gamma)$ of the wrapped Cauchy distribution via \eqref{rho} and \eqref{a}.

\begin{algorithm}[!ht]
\caption{ML estimation for the wrapped Cauchy distribution}\label{alg:wrapped_Cauchy}
\begin{algorithmic}
	\State \textbf{Input:} $\theta_1,\ldots,\theta_n\in [-\pi,\pi)$, $n \geq 3$, $w \in \mathring \Delta_n$ fulfilling Assumption~\ref{Ass:affin_ind}
	\State \textbf{Initialization:} $\zeta_{1,0} = \zeta_{2,0} =0$
	\For{$r=0,\ldots$}
	\begin{align*} 	
	\zeta_{1,r+1}& = \frac{\sum\limits_{i=1}^n w_i \frac{\cos(\theta_i)}{1-\zeta_{1,r}\cos(\theta_i)-\zeta_{2,r}\sin(\theta_i)}}{\sum\limits_{i=1}^n w_i \frac{1}{1-\zeta_{1,r}\cos(\theta_i)-\zeta_{2,r}\sin(\theta_i)} }\\
	\zeta_{2,r+1}& = \frac{\sum\limits_{i=1}^n w_i \frac{\sin(\theta_i)}{1-\zeta_{1,r}\cos(\theta_i)-\zeta_{2,r}\sin(\theta_i)}}{\sum\limits_{i=1}^n w_i \frac{1}{1-\zeta_{1,r}\cos(\theta_i)-\zeta_{2,r}\sin(\theta_i)} }\\
	\end{align*}
	\EndFor
\end{algorithmic}
\end{algorithm}

\section{Applications in Image Analysis}\label{Sec:Application}

In this section, we describe how our GMMF can be used to denoise images corrupted by different kinds of (additive) noise, 
in particular Cauchy, Gaussian and wrapped Cauchy noise.

\subsection{Nonlocal Denoising Approach}
Let $f\colon \GG\rightarrow \R$ be a noisy image, where $\GG = \{1,\ldots,n_1\}\times \{1,\ldots,n_2\}$ 
denotes the image domain. Here and in all subsequent cases we extend the image by mirroring  at the boundary. 
We assume that each pixel $f_i$, $i=(i_1,i_2)\in \GG$ is affected by noise in an independent and identical way.
Based on Theorem \ref{Prop:Student_t},this can be modeled as
\begin{equation} \label{noise_generation}
f_i = u_i + \sigma\frac{\eta}{\sqrt{y}}, \qquad i\in \GG,
\end{equation} 
where 
$u$ is the noise-free image we wish to reconstruct,
$\eta $ is a realization of $Z\sim\NN(0,1)$, and
$y$ is a realization of $Y\sim \Gamma\left(\frac{\nu}{2},\frac{\nu }{2}\right)$, where
$Z$ and $Y$ are independent.  
The given parameter $\nu\ge 1$  determines the amount of outliers 
and $\sigma>0$ their strength.
This results in independent realizations $f_i$ of $T_\nu(u_i,\sigma)$ distributed random variables.

\paragraph{Selection of Samples.}
The estimation of the noise-free image requires to select for each $i\in \GG$ a set of indices of samples $\SS(i)$ that are interpreted as i.i.d.\ 
realizations of $T_\nu(u_i,\sigma)$. 
We focus here on a nonlocal approach, which is based on an image self-similarity assumption stating that small patches of an image can be found several  times in the image. 
Then, the set $\SS(i)$ constitutes of the indices of the centers of $K$ patches that are most similar to the patch centered at $i\in \GG$. This  requires the selection of the patch size 
and an appropriate similarity measure, which need to be adapted to the noise statistic and the noise level and is described in the next paragraph.
In order to avoid a computational overload one typically restricts 
the search zone for similar patches to a $w\times w$ search window around $i\in \GG$. 
Having defined for each $i\in \GG$ a set of indices of samples $\SS(i)$, the noise-free image can be estimated as
\begin{align*}
\hat{u}_i &\in \argmin_{u_i}	\Bigl\{ L\left(u_i,\sigma|\{f_j\}_{j\in \SS(i)}\right)\Bigr\},\quad i\in \GG.
\end{align*}
However, by using such a pixelwise treatment we implicitly assume that the pixels of an image are independent of each other, which is in practice a rather unrealistic assumption; in fact, 
in natural images they are locally usually highly correlated. Taking the local dependence structure 
into account may improve the results of image restoration methods, 
which motivates to take  whole patches (and not only their centers) and estimate their parameters using Algorithm~\ref{alg:myriad_mult_general} as
 \begin{align*}
 ( \hat{\mu}_i, \hat{\Sigma}_i ) &\in \argmin_{\mu_i,\Sigma_i}	\Bigl\{ L\left(\mu_i,\Sigma_i|\{p_j\}_{j\in \SS(i)}\right)\Bigr\},\quad i\in \GG,
 \end{align*}
 where $p_j$ denotes a patch centered at $j\in \SS(i)$. 
If $\nu>2$, such that the covariance matrix of the Student-$t$ distribution exists, we take the estimated correlation into account 
and  restore a patch $\hat{p}_i$ as
\begin{equation*}
	\hat{p}_i = \hat{\mu}_i + \left(\hat{\Sigma}_i -\frac{\nu}{\nu-2} \sigma^2 I\right)\hat{\Sigma}_i^{-1}(p_i-\hat{\mu}_i),
\end{equation*}
 which is in this case the best linear unbiased estimator (BLUE) of $p_i$, see \cite{LNPS16}.
 If $\nu\le2$, we set $\hat p_i \coloneqq \hat{\mu}_i$.
Proceeding as above gives multiple estimates for each single image pixel that are averaged in the end to obtain the final image.

\paragraph{Patch Similarity.}
The selection of similar patches constitutes  a fundamental step in our nonlocal denoising approach.  At this point, the question arises how to compare noisy patches and numerical 
examples show that an adaptation of the similarity measure to the noise
distribution is essential for a robust similarity evaluation.
In~\cite{DDT12}, the authors formulated 
the similarity between  patches as a statistical hypothesis testing problem and proposed 
among other criteria a similarity measure based on a generalized likelihood test, 
which we use in the following. Further details on this approach can also be  found in~\cite{LPS18}. We briefly summarize the main ideas.

Modeling noisy images in a stochastic way allows to formulate the question whether two patches $p$ and $q$ are similar as a hypothesis test.
Two noisy patches $p,q$ are   considered to be similar  if they are
realizations  of  independent  random  variables $X\sim p_{\theta_1}$ and $Y\sim p_{\theta_2}$ 
that follow the same parametric distribution $p_\theta$, $\theta\in \Theta$ 
with a common parameter $\theta$ (corresponding to the underlying noise-free patch), i.e.\ $\theta_1 = \theta_2 \equiv \theta$. 
Therewith, the evaluation of the similarity between noisy patches can be
formulated as the following hypothesis test:
\begin{align}
\HH_0&\colon \theta_1 = \theta_2\qquad \text{vs.}\qquad 
\HH_1\colon \theta_1 \not = \theta_2.	\label{similarity_test}
\end{align}
In this context, a similarity measure $S$ maps a pair of noisy patches $p,q$ to a real value $c\in \R$. The larger this value $c$ is, the more the patches are considered to be similar. 

In the following, we describe how a similarity measure $S$ can be obtained based on a suitable test statistic for the hypothesis testing problem. 
In general, according to the Neyman-Pearson Theorem, see, e.g.~\cite{CB02}, the optimal test statistic 
(i.e.\ the one that maximizes the power for any given size $\alpha$) 
for single-valued hypotheses of the form
\begin{align*}
\HH_0&\colon \theta= \theta_0\qquad \text{vs.}\qquad 
\HH_1\colon \theta  = \theta_1	
\end{align*} 
is given by a likelihood ratio test. Note that single-valued testing problems correspond to a disjoint partition 
of the parameter space of the form $\Theta = \Theta_0\dot{\cup} \Theta_1$, where $\Theta_i = \{\theta_i\}$, $i=0,1$.	

Despite being a very strong theoretical result, the practical relevance of the Neyman-Pearson Theorem is limited due 
to the fact that $\Theta_0$ and $\Theta_1$ are in most applications not single-valued. 
Instead, the testing problem is a so called \emph{composite testing} problem, 
meaning that $\Theta_0$ and/or $\Theta_1$ contain more than one element.
It can be shown that for composite testing problems there does not exist a uniformly most powerful test. 
Now, the idea to generalize the Neyman-Pearson test to composite testing problems 
is to obtain first two candidates (or representatives) $\hat{\theta}_i$ of $\Theta_i$, and $i=0,1$ respectively, 
e.g.\ by maximum-likelihood estimation, 
and then to perform a Neyman-Pearson test using the computed candidates $\hat{\theta}_0$ and $\hat{\theta}_1$ 
in the definition of the test statistic. 
In case that an ML-estimation is used to determine $\hat{\theta}_0$ and $\hat{\theta}_1$, 
the resulting test is called \emph{Likelihood Ratio Test} (LR test). Although there are in general no theoretical guarantees concerning the power 
of LR tests, they usually perform very well in practice if the sample size used to estimate $\hat{\theta}_0$ and $\hat{\theta}_1$ is large enough.
This  is due to the fact that ML estimators are asymptotically efficient.	Several classical tests, e.g.\ one and two-sided $t$-tests, 
are either direct LR tests or equivalent to them.

In the sequel, we show how the above framework can be applied to our testing problem~\eqref{similarity_test}. 
First, let $x_1$ and $y_1$ be two single pixels for which we want to test whether they are realizations of the same distribution with unknown common parameter.
The LR statistic reads as
\begin{align} \label{FR_stat}
\lambda(x_1,y_1) 
&= \frac{\sup\limits_{\theta\in \HH_0}  
	\bigl\{{\cal L}(\theta|x_1,y_1)\bigr\}}{\sup\limits_{\theta }\bigl\{{\cal L}(\theta|x_1,y_1)\bigr\}}
 =\frac{\sup\limits_{\theta}\bigl\{{\cal L}(\theta|x_1){\cal L}(\theta|y_1)\bigr\}}{\sup\limits_{\theta}\bigl\{{\cal L}(\theta|x_1)\bigr\}
	\sup\limits_{\theta}\bigl\{{\cal L}(\theta|y_1)\bigr\}},
\end{align}
where in our situation ${\cal L} (\theta|x_1,y_1)$ denotes the likelihood function with respect to $\mu$ while $\nu$ and $\sigma$ are assumed to be known, 
and the notation $\theta\in \HH_0$ means that the supremum is taken over those parameters $\theta$ fulfilling $\HH_0$.
We use this statistic as similarity measure, i.e., 
$$S(x_1,y_1) \coloneqq \lambda(x_1,y_1).$$
More generally, since we assume the noise to affect each pixel in an independent and identical way, 
the similarity of two patches $p= (x_1,\ldots,x_t)$ and $q= (y_1,\ldots,y_t)$ is obtained as the product of the similarity of its pixels
\begin{equation*}
S(p,q) \coloneqq \prod_{i=1}^t S(x_i,y_i).
\end{equation*}
In case of the Student-$t$ distribution with $\nu>0$ degrees of freedom, the  similarity measure between two patches $p=(p_1,\ldots,p_t)$ 
and $q = (q_1,\ldots,q_t)$ can be computed as 
\begin{equation*}
S(p,q) = \prod_{i=1}^{t} \left(1+\frac{1}{\nu}\left(\frac{p_i-q_i}{2\sigma}\right)^2 \right)^{-(\nu+1)}.
\end{equation*}
In practice, we take a scaled  logarithm of $S$ in order to avoid numerical instabilities, resulting in the distance measure
\begin{equation*}
d(p,q) = \sum_{i=1}^t \log\left(\nu + \left(\frac{p_i-q_i}{2\sigma}\right)^2\right).
\end{equation*}

\subsection{Numerical Examples}

\paragraph{Cauchy Noise}
As mentioned in the introduction, the initial motivation for this work was the consideration of   
Cauchy noise in~\cite{LPS18,MDHY18} and thus we tested our approach on images corrupted by additive Cauchy noise ($\nu = 1$) with noise level $\sigma = 10$. 
Since the noise level is very high, we chose $n=50$ patches of size $5\times 5$ for the denoising. 
It turns out that the differences in terms of PSNR or SSIM compared to the current state of the art method~\cite{LPS18} 
are small and nearly not visible in images with much textured regions. 
However, the improvement is large in images with many constant or smoothly varying areas. 
This becomes in particular apparent in case of the test image given in Figure~\ref{Fig:examples_gamma_10}. 
Here, the top row displays the original image (left) together with its noisy version (right). The bottom row shows 
from left to right the results obtained using the variational method presented in~\cite{MDHY18}, the pixelwise~\cite{LPS18} 
and the patchwise nonlocal myriad filter. While in case of the variational method some of the outliers remain, the result of~\cite{LPS18} is rather grainy, 
which is much improved by our new approach. This is also reflected in the corresponding PSNR and SSIM values stated in the captions of the figure.
\begin{figure}[htbp]
	\centering	
	\begin{subfigure}[t]{0.32\textwidth}
		\centering
		\tikzsetnextfilename{test_orig_10}
		\begin{tikzpicture}[spy using outlines={%
			every spy in node/.append style={draw = none},%
			every spy on node/.append style={white},connect spies}]
		\node[anchor=south west,inner sep=0pt] (image) at (0,0){\includegraphics[width=.98\textwidth]{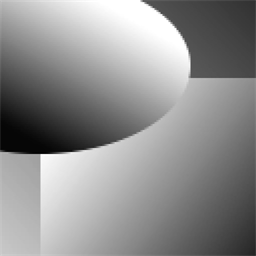}};
		\begin{scope}[x={($ (image.south east) - (image.south west) $ )},%
		y={( $ (image.north west) - (image.south west)$ )},%
		shift={(image.south west)}]
		\node (spy1) at (100/512,200/512) {};
		\coordinate (spy1to) at (0.25,-0.225);
		\spy [rectangle,width=0.45\textwidth, height=.35\textwidth,draw=white,magnification=1.6] on (spy1) in node at (spy1to);
		\node (spy2) at (370/512,120/512) {};
		\coordinate (spy2to) at (0.75,-0.225);
		\spy [rectangle,width=0.45\textwidth, height=.35\textwidth,draw=white,magnification=1.6] on (spy2) in node at (spy2to);
		\end{scope}
		\end{tikzpicture} 
		\caption{PSNR: $+\infty$, SSIM: $1$ }
	\end{subfigure}
	\begin{subfigure}[t]{0.32\textwidth}
		\centering
		\tikzsetnextfilename{test_noisy}
		\begin{tikzpicture}[spy using outlines={%
			every spy in node/.append style={draw = none},%
			every spy on node/.append style={white},connect spies}]
		\node[anchor=south west,inner sep=0pt] (image) at (0,0){\includegraphics[width=.98\textwidth]{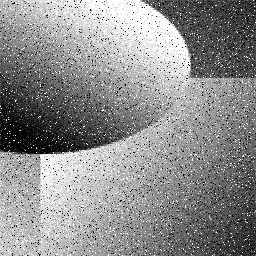}};
		\begin{scope}[x={($ (image.south east) - (image.south west) $ )},%
		y={( $ (image.north west) - (image.south west)$ )},%
		shift={(image.south west)}]
		\node (spy1) at (100/512,200/512) {};
		\coordinate (spy1to) at (0.25,-0.225);
		\spy [rectangle,width=0.45\textwidth, height=.35\textwidth,draw=white,magnification=1.6] on (spy1) in node at (spy1to);
		\node (spy2) at (370/512,120/512) {};
		\coordinate (spy2to) at (0.75,-0.225);
		\spy [rectangle,width=0.45\textwidth, height=.35\textwidth,draw=white,magnification=1.6] on (spy2) in node at (spy2to);
		\end{scope}
		\end{tikzpicture} 
		\caption{PSNR: $16.3270$, SSIM:$0.0870$ }
	\end{subfigure}	\\
	
	\vspace*{0.15cm}
	\begin{subfigure}[t]{0.32\textwidth}
		\centering
		\tikzsetnextfilename{test_jinmei_10}
		\begin{tikzpicture}[spy using outlines={%
			every spy in node/.append style={draw = none},%
			every spy on node/.append style={white},connect spies}]
		\node[anchor=south west,inner sep=0pt] (image) at (0,0){\includegraphics[width=.98\textwidth]{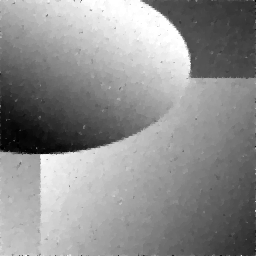}};
		\begin{scope}[x={($ (image.south east) - (image.south west) $ )},%
		y={( $ (image.north west) - (image.south west)$ )},%
		shift={(image.south west)}]
		\node (spy1) at (100/512,200/512) {};
		\coordinate (spy1to) at (0.25,-0.225);
		\spy [rectangle,width=0.45\textwidth, height=.35\textwidth,draw=white,magnification=1.6] on (spy1) in node at (spy1to);
		\node (spy2) at (370/512,120/512) {};
		\coordinate (spy2to) at (0.75,-0.225);
		\spy [rectangle,width=0.45\textwidth, height=.35\textwidth,draw=white,magnification=1.6] on (spy2) in node at (spy2to);
		\end{scope}
		\end{tikzpicture} 
		\caption{PSNR: $33.5452$, SSIM: $0.8651$}
	\end{subfigure}
	\begin{subfigure}[t]{0.32\textwidth}
		\centering
		\tikzsetnextfilename{test_our_10}
		\begin{tikzpicture}[spy using outlines={%
			every spy in node/.append style={draw = none},%
			every spy on node/.append style={white},connect spies}]
		\node[anchor=south west,inner sep=0pt] (image) at (0,0){\includegraphics[width=.98\textwidth]{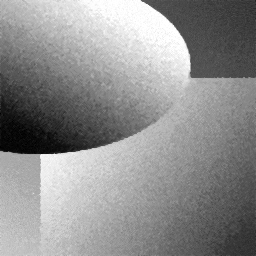}};
		\begin{scope}[x={($ (image.south east) - (image.south west) $ )},%
		y={( $ (image.north west) - (image.south west)$ )},%
		shift={(image.south west)}]
		\node (spy1) at (100/512,200/512) {};
		\coordinate (spy1to) at (0.25,-0.225);
		\spy [rectangle,width=0.45\textwidth, height=.35\textwidth,draw=white,magnification=1.6] on (spy1) in node at (spy1to);
		\node (spy2) at (370/512,120/512) {};
		\coordinate (spy2to) at (0.75,-0.225);
		\spy [rectangle,width=0.45\textwidth, height=.35\textwidth,draw=white,magnification=1.6] on (spy2) in node at (spy2to);
		\end{scope}
		\end{tikzpicture} 
		\caption{PSNR: $33.9144$, SSIM:$0.8410$ }
	\end{subfigure}	
	\begin{subfigure}[t]{0.32\textwidth}
		\centering
		\tikzsetnextfilename{test_our_10_new}
		\begin{tikzpicture}[spy using outlines={%
			every spy in node/.append style={draw = none},%
			every spy on node/.append style={white},connect spies}]
		\node[anchor=south west,inner sep=0pt] (image) at (0,0){\includegraphics[width=.98\textwidth]{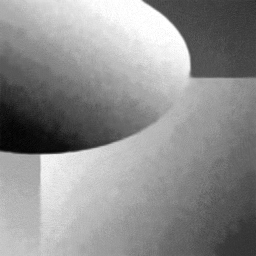}};
		\begin{scope}[x={($ (image.south east) - (image.south west) $ )},%
		y={( $ (image.north west) - (image.south west)$ )},%
		shift={(image.south west)}]
		\node (spy1) at (100/512,200/512) {};
		\coordinate (spy1to) at (0.25,-0.225);
		\spy [rectangle,width=0.45\textwidth, height=.35\textwidth,draw=white,magnification=1.6] on (spy1) in node at (spy1to);
		\node (spy2) at (370/512,120/512) {};
		\coordinate (spy2to) at (0.75,-0.225);
		\spy [rectangle,width=0.45\textwidth, height=.35\textwidth,draw=white,magnification=1.6] on (spy2) in node at (spy2to);
		\end{scope}
		\end{tikzpicture} 
		\caption{PSNR: $37.7263$, SSIM: $0.9398$ }
	\end{subfigure}		
	\caption[]{Denoising of the test image (top left) corrupted with additive Cauchy noise ($\nu=1$, $\sigma= 10$) (top right) 
	using the methods proposed in~\cite{MDHY18} (bottom left),~\cite{LPS18} (bottom middle) and our nonlocal GMMF (bottom right).}
	\label{Fig:examples_gamma_10}	
\end{figure}

As to be expected, the larger the patch size, the smoother is the resulting image. While this is desirable in constant  
or smoothly varying regions, fine structures and details are lost when the patch size is too large. 
Thus, a natural idea is to adapt the filter to the local structure of the image, that is, using a multivariate myriad filter 
in constant areas and a pixelwise myriad filter in regions with many details. We tried the following naive approach: 
First, we computed the one-dimensional as well as the multivariate myriad filtered images. 
Then, for each pixel of the image we decide whether it belongs to a homogeneous area or not and restore the pixel accordingly. 
In order to detect constant regions we used the variance of the pixels in similar patches, which we expect to be low in regions without much details.   
Results of this approach are shown in Figures~\ref{Fig:examples_fused1} and~\ref{Fig:examples_fused2}, the corresponding PSNR and SSIM values 
are given in Table~\ref{Tab:fusing}, where we used  $5\times 5$ patches  a sample size of $n=40$ for all images.  
In all examples one nicely sees that with the adaptive approach both fine details as well as constant areas are very well reconstructed,  
which is not the case when applying only the one-dimensional or only the multivariate myriad filter.

\begin{figure}[htbp]
	\centering	
	\begin{subfigure}[t]{0.24\textwidth}
		\centering
		\tikzsetnextfilename{cameraman_orig}
		\begin{tikzpicture}[spy using outlines={%
			every spy in node/.append style={draw = none},%
			every spy on node/.append style={white},connect spies}]
		\node[anchor=south west,inner sep=0pt] (image) at (0,0){\includegraphics[width=.98\textwidth]{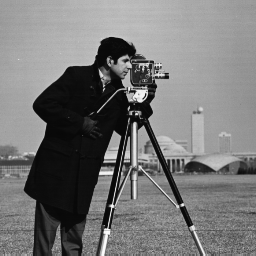}};
		\begin{scope}[x={($ (image.south east) - (image.south west) $ )},%
		y={( $ (image.north west) - (image.south west)$ )},%
		shift={(image.south west)}]
		\node (spy1) at (250/512,150/512) {};
		\coordinate (spy1to) at (0.25,-0.225);
		\spy [rectangle,width=0.45\textwidth, height=.35\textwidth,draw=white,magnification=2] on (spy1) in node at (spy1to);
		\node (spy2) at (420/512,385/512) {};
		\coordinate (spy2to) at (0.75,-0.225);
		\spy [rectangle,width=0.45\textwidth, height=.35\textwidth,draw=white,magnification=2] on (spy2) in node at (spy2to);
		\end{scope}
		\end{tikzpicture} 
	\end{subfigure}
	\begin{subfigure}[t]{0.24\textwidth}
		\centering
		\tikzsetnextfilename{cameraman_1d}
		\begin{tikzpicture}[spy using outlines={%
			every spy in node/.append style={draw = none},%
			every spy on node/.append style={white},connect spies}]
		\node[anchor=south west,inner sep=0pt] (image) at (0,0){\includegraphics[width=.98\textwidth]{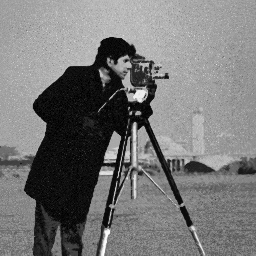}};
		\begin{scope}[x={($ (image.south east) - (image.south west) $ )},%
		y={( $ (image.north west) - (image.south west)$ )},%
		shift={(image.south west)}]
		\node (spy1) at (250/512,150/512) {};
		\coordinate (spy1to) at (0.25,-0.225);
		\spy [rectangle,width=0.45\textwidth, height=.35\textwidth,draw=white,magnification=2] on (spy1) in node at (spy1to);
		\node (spy2) at (420/512,385/512) {};
			\coordinate (spy2to) at (0.75,-0.225);
			\spy [rectangle,width=0.45\textwidth, height=.35\textwidth,draw=white,magnification=2] on (spy2) in node at (spy2to);
		\end{scope}
		\end{tikzpicture} 
	\end{subfigure}
	\begin{subfigure}[t]{0.24\textwidth}
		\centering
		\tikzsetnextfilename{cameraman_d}
		\begin{tikzpicture}[spy using outlines={%
			every spy in node/.append style={draw = none},%
			every spy on node/.append style={white},connect spies}]
		\node[anchor=south west,inner sep=0pt] (image) at (0,0){\includegraphics[width=.98\textwidth]{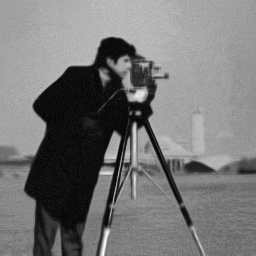}};
		\begin{scope}[x={($ (image.south east) - (image.south west) $ )},%
		y={( $ (image.north west) - (image.south west)$ )},%
		shift={(image.south west)}]
		\node (spy1) at (250/512,150/512) {};
		\coordinate (spy1to) at (0.25,-0.225);
		\spy [rectangle,width=0.45\textwidth, height=.35\textwidth,draw=white,magnification=2] on (spy1) in node at (spy1to);
		\node (spy2) at (420/512,385/512) {};
		\coordinate (spy2to) at (0.75,-0.225);
		\spy [rectangle,width=0.45\textwidth, height=.35\textwidth,draw=white,magnification=2] on (spy2) in node at (spy2to);
		\end{scope}
		\end{tikzpicture} 
	\end{subfigure}	
	\begin{subfigure}[t]{0.24\textwidth}
		\centering
		\tikzsetnextfilename{cameraman_fused}
		\begin{tikzpicture}[spy using outlines={%
			every spy in node/.append style={draw = none},%
			every spy on node/.append style={white},connect spies}]
		\node[anchor=south west,inner sep=0pt] (image) at (0,0){\includegraphics[width=.98\textwidth]{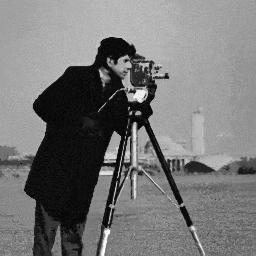}};
		\begin{scope}[x={($ (image.south east) - (image.south west) $ )},%
		y={( $ (image.north west) - (image.south west)$ )},%
		shift={(image.south west)}]
		\node (spy1) at (250/512,150/512) {};
		\coordinate (spy1to) at (0.25,-0.225);
		\spy [rectangle,width=0.45\textwidth, height=.35\textwidth,draw=white,magnification=2] on (spy1) in node at (spy1to);
		\node (spy2) at (420/512,385/512) {};
		\coordinate (spy2to) at (0.75,-0.225);
		\spy [rectangle,width=0.45\textwidth, height=.35\textwidth,draw=white,magnification=2] on (spy2) in node at (spy2to);
		\end{scope}
		\end{tikzpicture} 
	\end{subfigure}
	
	\vspace*{0.15cm}
	
	\begin{subfigure}[t]{0.24\textwidth}
		\centering
		\tikzsetnextfilename{boat_orig}
		\begin{tikzpicture}[spy using outlines={%
			every spy in node/.append style={draw = none},%
			every spy on node/.append style={white},connect spies}]
		\node[anchor=south west,inner sep=0pt] (image) at (0,0){\includegraphics[width=.98\textwidth]{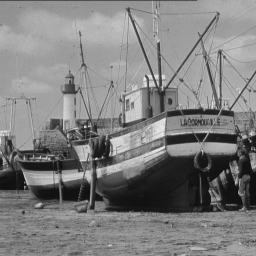}};
		\begin{scope}[x={($ (image.south east) - (image.south west) $ )},%
		y={( $ (image.north west) - (image.south west)$ )},%
		shift={(image.south west)}]
		\node (spy1) at (90/512,430/512) {};
		\coordinate (spy1to) at (0.25,-0.225);
		\spy [rectangle,width=0.45\textwidth, height=.35\textwidth,draw=white,magnification=2] on (spy1) in node at (spy1to);
		\node (spy2) at (400/512,270/512) {};
		\coordinate (spy2to) at (0.75,-0.225);
		\spy [rectangle,width=0.45\textwidth, height=.35\textwidth,draw=white,magnification=1.6] on (spy2) in node at (spy2to);
		\end{scope}
		\end{tikzpicture} 
	\end{subfigure}
	\begin{subfigure}[t]{0.24\textwidth}
		\centering
		\tikzsetnextfilename{boat_1d}
		\begin{tikzpicture}[spy using outlines={%
			every spy in node/.append style={draw = none},%
			every spy on node/.append style={white},connect spies}]
		\node[anchor=south west,inner sep=0pt] (image) at (0,0){\includegraphics[width=.98\textwidth]{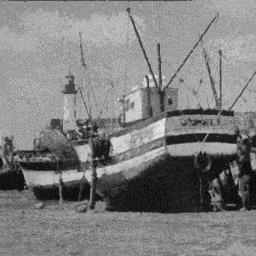}};
		\begin{scope}[x={($ (image.south east) - (image.south west) $ )},%
		y={( $ (image.north west) - (image.south west)$ )},%
		shift={(image.south west)}]
		\node (spy1) at (90/512,430/512) {};
		\coordinate (spy1to) at (0.25,-0.225);
		\spy [rectangle,width=0.45\textwidth, height=.35\textwidth,draw=white,magnification=2] on (spy1) in node at (spy1to);
		\node (spy2) at (400/512,270/512) {};
		\coordinate (spy2to) at (0.75,-0.225);
		\spy [rectangle,width=0.45\textwidth, height=.35\textwidth,draw=white,magnification=1.6] on (spy2) in node at (spy2to);
		\end{scope}
		\end{tikzpicture} 
	\end{subfigure}
	\begin{subfigure}[t]{0.24\textwidth}
		\centering
		\tikzsetnextfilename{boat_d}
		\begin{tikzpicture}[spy using outlines={%
			every spy in node/.append style={draw = none},%
			every spy on node/.append style={white},connect spies}]
		\node[anchor=south west,inner sep=0pt] (image) at (0,0){\includegraphics[width=.98\textwidth]{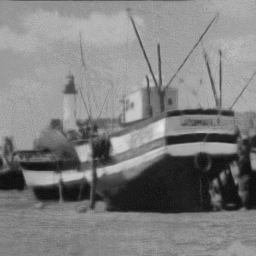}};
		\begin{scope}[x={($ (image.south east) - (image.south west) $ )},%
		y={( $ (image.north west) - (image.south west)$ )},%
		shift={(image.south west)}]
			\node (spy1) at (90/512,430/512) {};
			\coordinate (spy1to) at (0.25,-0.225);
			\spy [rectangle,width=0.45\textwidth, height=.35\textwidth,draw=white,magnification=2] on (spy1) in node at (spy1to);
			\node (spy2) at (400/512,270/512) {};
			\coordinate (spy2to) at (0.75,-0.225);
			\spy [rectangle,width=0.45\textwidth, height=.35\textwidth,draw=white,magnification=1.6] on (spy2) in node at (spy2to);
		\end{scope}
		\end{tikzpicture} 
	\end{subfigure}	
	\begin{subfigure}[t]{0.24\textwidth}
		\centering
		\tikzsetnextfilename{boat_fused}
		\begin{tikzpicture}[spy using outlines={%
			every spy in node/.append style={draw = none},%
			every spy on node/.append style={white},connect spies}]
		\node[anchor=south west,inner sep=0pt] (image) at (0,0){\includegraphics[width=.98\textwidth]{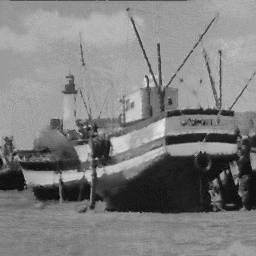}};
		\begin{scope}[x={($ (image.south east) - (image.south west) $ )},%
		y={( $ (image.north west) - (image.south west)$ )},%
		shift={(image.south west)}]
		\node (spy1) at (90/512,430/512) {};
		\coordinate (spy1to) at (0.25,-0.225);
		\spy [rectangle,width=0.45\textwidth, height=.35\textwidth,draw=white,magnification=2] on (spy1) in node at (spy1to);
		\node (spy2) at (400/512,270/512) {};
		\coordinate (spy2to) at (0.75,-0.225);
		\spy [rectangle,width=0.45\textwidth, height=.35\textwidth,draw=white,magnification=1.6] on (spy2) in node at (spy2to);
		\end{scope}
		\end{tikzpicture} 
	\end{subfigure}
	
	\vspace*{0.15cm}

	\begin{subfigure}[t]{0.24\textwidth}
		\centering
		\tikzsetnextfilename{house_orig}
		\begin{tikzpicture}[spy using outlines={%
			every spy in node/.append style={draw = none},%
			every spy on node/.append style={white},connect spies}]
		\node[anchor=south west,inner sep=0pt] (image) at (0,0){\includegraphics[width=.98\textwidth]{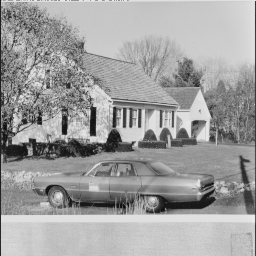}};
		\begin{scope}[x={($ (image.south east) - (image.south west) $ )},%
		y={( $ (image.north west) - (image.south west)$ )},%
		shift={(image.south west)}]
		\node (spy1) at (130/512,280/512) {};
		\coordinate (spy1to) at (0.25,-0.225);
		\spy [rectangle,width=0.45\textwidth, height=.35\textwidth,draw=white,magnification=1.6] on (spy1) in node at (spy1to);
		\node (spy2) at (370/512,400/512) {};
		\coordinate (spy2to) at (0.75,-0.225);
		\spy [rectangle,width=0.45\textwidth, height=.35\textwidth,draw=white,magnification=1.6] on (spy2) in node at (spy2to);
		\end{scope}
		\end{tikzpicture} 
	\end{subfigure}
	\begin{subfigure}[t]{0.24\textwidth}
		\centering
		\tikzsetnextfilename{house_1d}
		\begin{tikzpicture}[spy using outlines={%
			every spy in node/.append style={draw = none},%
			every spy on node/.append style={white},connect spies}]
		\node[anchor=south west,inner sep=0pt] (image) at (0,0){\includegraphics[width=.98\textwidth]{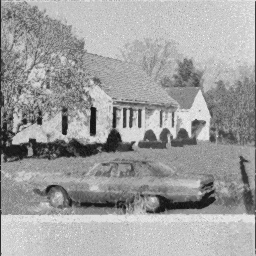}};
		\begin{scope}[x={($ (image.south east) - (image.south west) $ )},%
		y={( $ (image.north west) - (image.south west)$ )},%
		shift={(image.south west)}]
			\node (spy1) at (130/512,280/512) {};
			\coordinate (spy1to) at (0.25,-0.225);
			\spy [rectangle,width=0.45\textwidth, height=.35\textwidth,draw=white,magnification=1.6] on (spy1) in node at (spy1to);
			\node (spy2) at (370/512,400/512) {};
			\coordinate (spy2to) at (0.75,-0.225);
			\spy [rectangle,width=0.45\textwidth, height=.35\textwidth,draw=white,magnification=1.6] on (spy2) in node at (spy2to);
		\end{scope}
		\end{tikzpicture} 
	\end{subfigure}
	\begin{subfigure}[t]{0.24\textwidth}
		\centering
		\tikzsetnextfilename{house_d}
		\begin{tikzpicture}[spy using outlines={%
			every spy in node/.append style={draw = none},%
			every spy on node/.append style={white},connect spies}]
		\node[anchor=south west,inner sep=0pt] (image) at (0,0){\includegraphics[width=.98\textwidth]{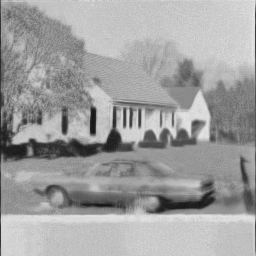}};
		\begin{scope}[x={($ (image.south east) - (image.south west) $ )},%
		y={( $ (image.north west) - (image.south west)$ )},%
		shift={(image.south west)}]
			\node (spy1) at (130/512,280/512) {};
			\coordinate (spy1to) at (0.25,-0.225);
			\spy [rectangle,width=0.45\textwidth, height=.35\textwidth,draw=white,magnification=1.6] on (spy1) in node at (spy1to);
			\node (spy2) at (370/512,400/512) {};
			\coordinate (spy2to) at (0.75,-0.225);
			\spy [rectangle,width=0.45\textwidth, height=.35\textwidth,draw=white,magnification=1.6] on (spy2) in node at (spy2to);
		\end{scope}
		\end{tikzpicture} 
	\end{subfigure}	
	\begin{subfigure}[t]{0.24\textwidth}
		\centering
		\tikzsetnextfilename{house_fused}
		\begin{tikzpicture}[spy using outlines={%
			every spy in node/.append style={draw = none},%
			every spy on node/.append style={white},connect spies}]
		\node[anchor=south west,inner sep=0pt] (image) at (0,0){\includegraphics[width=.98\textwidth]{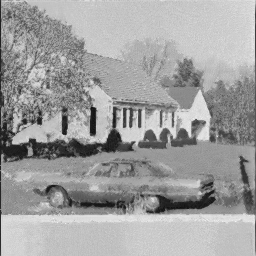}};
		\begin{scope}[x={($ (image.south east) - (image.south west) $ )},%
		y={( $ (image.north west) - (image.south west)$ )},%
		shift={(image.south west)}]
		\node (spy1) at (130/512,280/512) {};
		\coordinate (spy1to) at (0.25,-0.225);
		\spy [rectangle,width=0.45\textwidth, height=.35\textwidth,draw=white,magnification=1.6] on (spy1) in node at (spy1to);
		\node (spy2) at (370/512,400/512) {};
		\coordinate (spy2to) at (0.75,-0.225);
		\spy [rectangle,width=0.45\textwidth, height=.35\textwidth,draw=white,magnification=1.6] on (spy2) in node at (spy2to);
		\end{scope}
		\end{tikzpicture} 
	\end{subfigure}	
	\caption[]{Denoising of images (first column) corrupted with additive Cauchy noise ($\nu=1$, $\sigma = 10$) 
	using a one-dimensional myriad filter~\cite{LPS18} (second column), the multivariate myriad filter (third column) and a combination of both (fourth column).}
	\label{Fig:examples_fused1}	
\end{figure}

\begin{figure}[htbp]
	\centering

	\begin{subfigure}[t]{0.24\textwidth}
		\centering
		\tikzsetnextfilename{parrot_orig}
		\begin{tikzpicture}[spy using outlines={%
			every spy in node/.append style={draw = none},%
			every spy on node/.append style={white},connect spies}]
		\node[anchor=south west,inner sep=0pt] (image) at (0,0){\includegraphics[width=.98\textwidth]{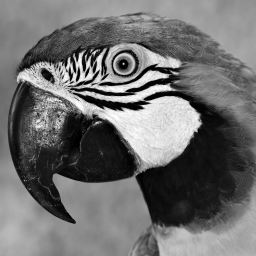}};
		\begin{scope}[x={($ (image.south east) - (image.south west) $ )},%
		y={( $ (image.north west) - (image.south west)$ )},%
		shift={(image.south west)}]
		\node (spy1) at (120/512,370/512) {};
		\coordinate (spy1to) at (0.25,-0.225);
		\spy [rectangle,width=0.45\textwidth, height=.35\textwidth,draw=white,magnification=1.6] on (spy1) in node at (spy1to);
		\node (spy2) at (330/512,70/512) {};
		\coordinate (spy2to) at (0.75,-0.225);
		\spy [rectangle,width=0.45\textwidth, height=.35\textwidth,draw=white,magnification=1.6] on (spy2) in node at (spy2to);
		\end{scope}
		\end{tikzpicture} 
	\end{subfigure}
	\begin{subfigure}[t]{0.24\textwidth}
		\centering
		\tikzsetnextfilename{parrot_1d}
		\begin{tikzpicture}[spy using outlines={%
			every spy in node/.append style={draw = none},%
			every spy on node/.append style={white},connect spies}]
		\node[anchor=south west,inner sep=0pt] (image) at (0,0){\includegraphics[width=.98\textwidth]{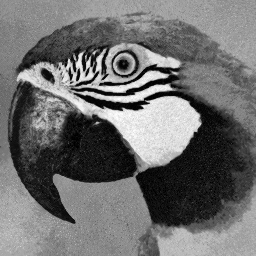}};
		\begin{scope}[x={($ (image.south east) - (image.south west) $ )},%
		y={( $ (image.north west) - (image.south west)$ )},%
		shift={(image.south west)}]
		\node (spy1) at (120/512,370/512) {};
		\coordinate (spy1to) at (0.25,-0.225);
		\spy [rectangle,width=0.45\textwidth, height=.35\textwidth,draw=white,magnification=1.6] on (spy1) in node at (spy1to);
		\node (spy2) at (330/512,70/512) {};
		\coordinate (spy2to) at (0.75,-0.225);
		\spy [rectangle,width=0.45\textwidth, height=.35\textwidth,draw=white,magnification=1.6] on (spy2) in node at (spy2to);
		\end{scope}
		\end{tikzpicture} 
	\end{subfigure}
	\begin{subfigure}[t]{0.24\textwidth}
		\centering
		\tikzsetnextfilename{parrot_d}
		\begin{tikzpicture}[spy using outlines={%
			every spy in node/.append style={draw = none},%
			every spy on node/.append style={white},connect spies}]
		\node[anchor=south west,inner sep=0pt] (image) at (0,0){\includegraphics[width=.98\textwidth]{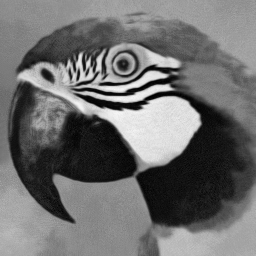}};
		\begin{scope}[x={($ (image.south east) - (image.south west) $ )},%
		y={( $ (image.north west) - (image.south west)$ )},%
		shift={(image.south west)}]
		\node (spy1) at (120/512,370/512) {};
		\coordinate (spy1to) at (0.25,-0.225);
		\spy [rectangle,width=0.45\textwidth, height=.35\textwidth,draw=white,magnification=1.6] on (spy1) in node at (spy1to);
		\node (spy2) at (330/512,70/512) {};
		\coordinate (spy2to) at (0.75,-0.225);
		\spy [rectangle,width=0.45\textwidth, height=.35\textwidth,draw=white,magnification=1.6] on (spy2) in node at (spy2to);
		\end{scope}
		\end{tikzpicture} 
	\end{subfigure}	
	\begin{subfigure}[t]{0.24\textwidth}
		\centering
		\tikzsetnextfilename{parrot_fused}
		\begin{tikzpicture}[spy using outlines={%
			every spy in node/.append style={draw = none},%
			every spy on node/.append style={white},connect spies}]
		\node[anchor=south west,inner sep=0pt] (image) at (0,0){\includegraphics[width=.98\textwidth]{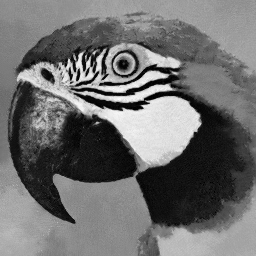}};
		\begin{scope}[x={($ (image.south east) - (image.south west) $ )},%
		y={( $ (image.north west) - (image.south west)$ )},%
		shift={(image.south west)}]
		\node (spy1) at (120/512,370/512) {};
		\coordinate (spy1to) at (0.25,-0.225);
		\spy [rectangle,width=0.45\textwidth, height=.35\textwidth,draw=white,magnification=1.6] on (spy1) in node at (spy1to);
		\node (spy2) at (330/512,70/512) {};
		\coordinate (spy2to) at (0.75,-0.225);
		\spy [rectangle,width=0.45\textwidth, height=.35\textwidth,draw=white,magnification=1.6] on (spy2) in node at (spy2to);
		\end{scope}
		\end{tikzpicture} 
	\end{subfigure}
		
		\vspace*{0.15cm}
	
	\begin{subfigure}[t]{0.24\textwidth}
		\centering
		\tikzsetnextfilename{plane_orig}
		\begin{tikzpicture}[spy using outlines={%
			every spy in node/.append style={draw = none},%
			every spy on node/.append style={white},connect spies}]
		\node[anchor=south west,inner sep=0pt] (image) at (0,0){\includegraphics[width=.98\textwidth]{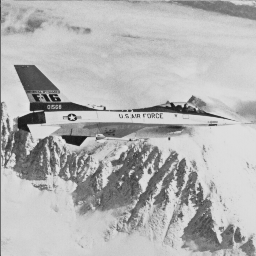}};
		\begin{scope}[x={($ (image.south east) - (image.south west) $ )},%
		y={( $ (image.north west) - (image.south west)$ )},%
		shift={(image.south west)}]
		\node (spy1) at (130/512,290/512) {};
		\coordinate (spy1to) at (0.25,-0.225);
		\spy [rectangle,width=0.45\textwidth, height=.35\textwidth,draw=white,magnification=1.6] on (spy1) in node at (spy1to);
		\node (spy2) at (370/512,400/512) {};
		\coordinate (spy2to) at (0.75,-0.225);
		\spy [rectangle,width=0.45\textwidth, height=.35\textwidth,draw=white,magnification=1.6] on (spy2) in node at (spy2to);
		\end{scope}
		\end{tikzpicture} 
	\end{subfigure}
	\begin{subfigure}[t]{0.24\textwidth}
		\centering
		\tikzsetnextfilename{plane_1d}
		\begin{tikzpicture}[spy using outlines={%
			every spy in node/.append style={draw = none},%
			every spy on node/.append style={white},connect spies}]
		\node[anchor=south west,inner sep=0pt] (image) at (0,0){\includegraphics[width=.98\textwidth]{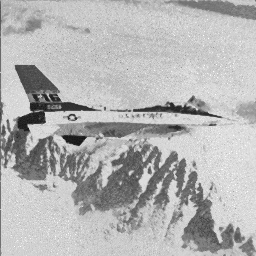}};
		\begin{scope}[x={($ (image.south east) - (image.south west) $ )},%
		y={( $ (image.north west) - (image.south west)$ )},%
		shift={(image.south west)}]
		\node (spy1) at (130/512,290/512) {};
		\coordinate (spy1to) at (0.25,-0.225);
		\spy [rectangle,width=0.45\textwidth, height=.35\textwidth,draw=white,magnification=1.6] on (spy1) in node at (spy1to);
		\node (spy2) at (370/512,400/512) {};
		\coordinate (spy2to) at (0.75,-0.225);
		\spy [rectangle,width=0.45\textwidth, height=.35\textwidth,draw=white,magnification=1.6] on (spy2) in node at (spy2to);
		\end{scope}
		\end{tikzpicture} 
	\end{subfigure}
	\begin{subfigure}[t]{0.24\textwidth}
		\centering
		\tikzsetnextfilename{plane_d}
		\begin{tikzpicture}[spy using outlines={%
			every spy in node/.append style={draw = none},%
			every spy on node/.append style={white},connect spies}]
		\node[anchor=south west,inner sep=0pt] (image) at (0,0){\includegraphics[width=.98\textwidth]{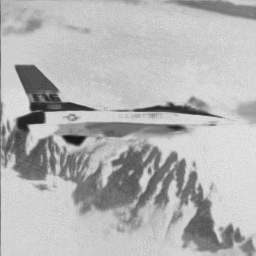}};
		\begin{scope}[x={($ (image.south east) - (image.south west) $ )},%
		y={( $ (image.north west) - (image.south west)$ )},%
		shift={(image.south west)}]
		\node (spy1) at (130/512,290/512) {};
		\coordinate (spy1to) at (0.25,-0.225);
		\spy [rectangle,width=0.45\textwidth, height=.35\textwidth,draw=white,magnification=1.6] on (spy1) in node at (spy1to);
		\node (spy2) at (370/512,400/512) {};
		\coordinate (spy2to) at (0.75,-0.225);
		\spy [rectangle,width=0.45\textwidth, height=.35\textwidth,draw=white,magnification=1.6] on (spy2) in node at (spy2to);
		\end{scope}
		\end{tikzpicture} 
	\end{subfigure}	
	\begin{subfigure}[t]{0.24\textwidth}
		\centering
		\tikzsetnextfilename{plane_fused}
		\begin{tikzpicture}[spy using outlines={%
			every spy in node/.append style={draw = none},%
			every spy on node/.append style={white},connect spies}]
		\node[anchor=south west,inner sep=0pt] (image) at (0,0){\includegraphics[width=.98\textwidth]{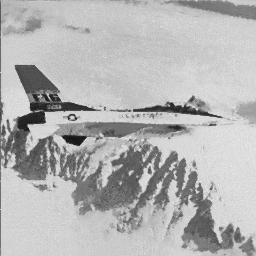}};
		\begin{scope}[x={($ (image.south east) - (image.south west) $ )},%
		y={( $ (image.north west) - (image.south west)$ )},%
		shift={(image.south west)}]
		\node (spy1) at (130/512,290/512) {};
		\coordinate (spy1to) at (0.25,-0.225);
		\spy [rectangle,width=0.45\textwidth, height=.35\textwidth,draw=white,magnification=1.6] on (spy1) in node at (spy1to);
		\node (spy2) at (370/512,400/512) {};
		\coordinate (spy2to) at (0.75,-0.225);
		\spy [rectangle,width=0.45\textwidth, height=.35\textwidth,draw=white,magnification=1.6] on (spy2) in node at (spy2to);
		\end{scope}
		\end{tikzpicture} 
	\end{subfigure}

	\vspace*{0.15cm}
	\begin{subfigure}[t]{0.24\textwidth}
		\centering
		\tikzsetnextfilename{leopard_orig}
		\begin{tikzpicture}[spy using outlines={%
			every spy in node/.append style={draw = none},%
			every spy on node/.append style={white},connect spies}]
		\node[anchor=south west,inner sep=0pt] (image) at (0,0){\includegraphics[width=.98\textwidth]{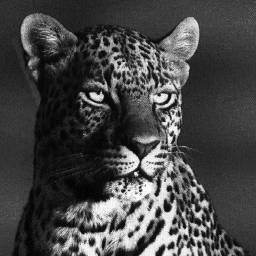}};
		\begin{scope}[x={($ (image.south east) - (image.south west) $ )},%
		y={( $ (image.north west) - (image.south west)$ )},%
		shift={(image.south west)}]
		\node (spy1) at (280/512,170/512) {};
		\coordinate (spy1to) at (0.25,-0.2);
		\spy [rectangle,width=0.45\textwidth, height=.3\textwidth,draw=white,magnification=1.6] on (spy1) in node at (spy1to);
		\node (spy2) at (390/512,390/512) {};
		\coordinate (spy2to) at (0.75,-0.2);
		\spy [rectangle,width=0.45\textwidth, height=.3\textwidth,draw=white,magnification=1.6] on (spy2) in node at (spy2to);
		\end{scope}
		\end{tikzpicture} 
	\end{subfigure}
	\begin{subfigure}[t]{0.24\textwidth}
		\centering
		\tikzsetnextfilename{leopard_1d}
		\begin{tikzpicture}[spy using outlines={%
			every spy in node/.append style={draw = none},%
			every spy on node/.append style={white},connect spies}]
		\node[anchor=south west,inner sep=0pt] (image) at (0,0){\includegraphics[width=.98\textwidth]{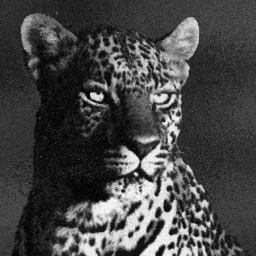}};
		\begin{scope}[x={($ (image.south east) - (image.south west) $ )},%
		y={( $ (image.north west) - (image.south west)$ )},%
		shift={(image.south west)}]
		\node (spy1) at (280/512,170/512) {};
		\coordinate (spy1to) at (0.25,-0.2);
		\spy [rectangle,width=0.45\textwidth, height=.3\textwidth,draw=white,magnification=1.6] on (spy1) in node at (spy1to);
		\node (spy2) at (390/512,390/512) {};
		\coordinate (spy2to) at (0.75,-0.2);
		\spy [rectangle,width=0.45\textwidth, height=.3\textwidth,draw=white,magnification=1.6] on (spy2) in node at (spy2to);
		\end{scope}
		\end{tikzpicture} 
	\end{subfigure}
	\begin{subfigure}[t]{0.24\textwidth}
		\centering
		\tikzsetnextfilename{leopard_d}
		\begin{tikzpicture}[spy using outlines={%
			every spy in node/.append style={draw = none},%
			every spy on node/.append style={white},connect spies}]
		\node[anchor=south west,inner sep=0pt] (image) at (0,0){\includegraphics[width=.98\textwidth]{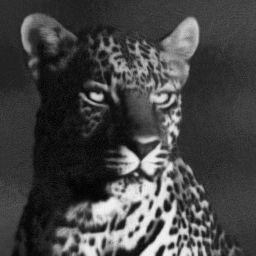}};
		\begin{scope}[x={($ (image.south east) - (image.south west) $ )},%
		y={( $ (image.north west) - (image.south west)$ )},%
		shift={(image.south west)}]
		\node (spy1) at (280/512,170/512) {};
		\coordinate (spy1to) at (0.25,-0.2);
		\spy [rectangle,width=0.45\textwidth, height=.3\textwidth,draw=white,magnification=1.6] on (spy1) in node at (spy1to);
		\node (spy2) at (390/512,390/512) {};
		\coordinate (spy2to) at (0.75,-0.2);
		\spy [rectangle,width=0.45\textwidth, height=.3\textwidth,draw=white,magnification=1.6] on (spy2) in node at (spy2to);
		\end{scope}
		\end{tikzpicture} 
	\end{subfigure}	
	\begin{subfigure}[t]{0.24\textwidth}
		\centering
		\tikzsetnextfilename{leopard_fused}
		\begin{tikzpicture}[spy using outlines={%
			every spy in node/.append style={draw = none},%
			every spy on node/.append style={white},connect spies}]
		\node[anchor=south west,inner sep=0pt] (image) at (0,0){\includegraphics[width=.98\textwidth]{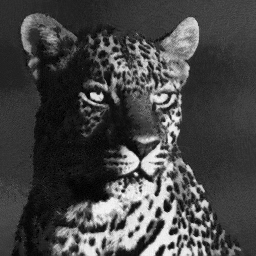}};
		\begin{scope}[x={($ (image.south east) - (image.south west) $ )},%
		y={( $ (image.north west) - (image.south west)$ )},%
		shift={(image.south west)}]
	\node (spy1) at (280/512,170/512) {};
	\coordinate (spy1to) at (0.25,-0.2);
	\spy [rectangle,width=0.45\textwidth, height=.3\textwidth,draw=white,magnification=1.6] on (spy1) in node at (spy1to);
	\node (spy2) at (390/512,390/512) {};
	\coordinate (spy2to) at (0.75,-0.2);
	\spy [rectangle,width=0.45\textwidth, height=.3\textwidth,draw=white,magnification=1.6] on (spy2) in node at (spy2to);
		\end{scope}
		\end{tikzpicture} 
	\end{subfigure}
	\caption[]{Denoising of images (first column) corrupted with additive Cauchy noise ($\nu=1$, $\sigma = 10$) 
	using a one-dimensional myriad filter~\cite{LPS18} (second column), the multivariate myriad filter (third column) and a combination of both (fourth column).}
	\label{Fig:examples_fused2}	
\end{figure}

\begin{table}[thb]
\begin{small}
	\centerline{
		\begin{tabular}{l@{\;}|c@{\;}c@{\;}c@{\;}c@{\;}c@{\;}c}
			\hline
			\hline
			 &  & PSNR &   &   & SSIM &  \\	\hline
			Image	 &  1d & multivariate &  adaptive  &  1d  & multivariate &  adaptive \\
			\hline
			\hline
			&&&$\sigma=10$& && \\
			\hline
			cameraman & 26.2721 & 25.5515  &  27.3726 \vline&0.7759 &  0.7376 & 0.7650 \\  
			boat & 26.2230 & 25.8486 & 26.6643  \vline & 0.7349&0.7209&0.7418\\  
				house &  25.1464 & 24.6241 &  25.8974 \vline & 0.7467  &0.7203 & 0.7689\\
			parrot & 26.4647 & 26.1817 &  27.4714 \vline & 0.7842  &0.7727&0.7858\\  
			plane &  25.8675  & 25.6023  & 27.1730 \vline  & 0.7694&0.7501& 0.7799 \\  
			leopard &  24.7906 &  24.1551 & 26.2640  \vline & 0.7535  &0.7348&0.7692\\  
			\hline				
		\end{tabular}
	}
\end{small}
\caption{Comparison of  PSNR and SSIM values of the one-dimensional, the multivariate and an adaptive myriad filter. }
\label{Tab:fusing}
\end{table}

\paragraph{Gaussian Noise}
For $\nu \to \infty$, the Student-$t$ distribution converges to the normal distribution.
Thus, for large $\nu$ we expect the nonlocal GMMF to be able to denoise images corrupted by Gaussian noise as well. 
This is illustrated in Figure~\ref{Fig:examples_Gaussian}, which shows the \emph{barbara} image (top left) corrupted by additive white Gaussian noise 
of standard deviation $\sigma = 10$ (top right) together with the denoising result using the minimum means squared error estimator proposed in the state-of-the-art 
nonlocal Bayes algorithm~\cite{LBM13} (bottom left) and our nonlocal GMMF with $\nu=1000$ (bottom right), this time applied with the BLUE estimator instead of the mean. 
Again, we chose $n=40$ samples, but since the noise is not very strong we used $ 3\times 3$ patches in this example. Both methods reconstruct the image very well 
and there are no visible differences. 
Note that we show the results obtained after one iteration of the respective algorithms 
in order to see the influence of the different estimators and no other effects. 
The complete denoising procedure proposed in~\cite{LBM13} uses this first-step denoised image as an oracle image for patch selection in a second iteration and applies 
further fine-tuning steps such as a special treatment of homogeneous areas and patch aggregation to obtain the final image. 

\begin{figure}[htbp]
	\centering	
	\begin{subfigure}[t]{0.49\textwidth}
		\centering
		\tikzsetnextfilename{barbara_orig}
		\begin{tikzpicture}[spy using outlines={%
			every spy in node/.append style={draw = none},%
			every spy on node/.append style={white},connect spies}]
		\node[anchor=south west,inner sep=0pt] (image) at (0,0){\includegraphics[width=.98\textwidth]{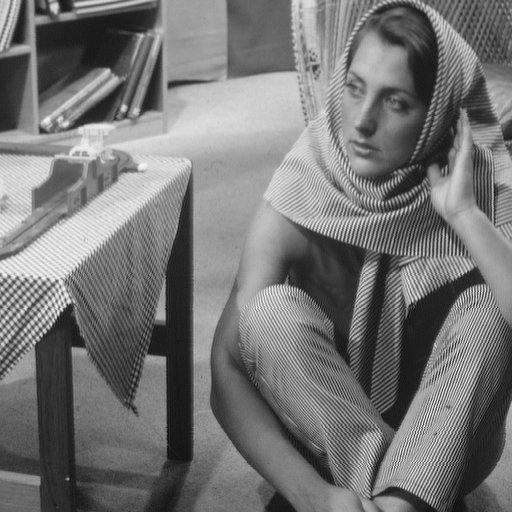}};
		\begin{scope}[x={($ (image.south east) - (image.south west) $ )},%
		y={( $ (image.north west) - (image.south west)$ )},%
		shift={(image.south west)}]
		\node (spy1) at (100/512,200/512) {};
		\coordinate (spy1to) at (0.25,-0.225);
		\spy [rectangle,width=0.45\textwidth, height=.35\textwidth,draw=white,magnification=1.6] on (spy1) in node at (spy1to);
		\node (spy2) at (370/512,120/512) {};
		\coordinate (spy2to) at (0.75,-0.225);
		\spy [rectangle,width=0.45\textwidth, height=.35\textwidth,draw=white,magnification=1.6] on (spy2) in node at (spy2to);
		\end{scope}
		\end{tikzpicture} 
		\caption{PSNR: $+\infty$, SSIM: $1$ }
	\end{subfigure}
	\begin{subfigure}[t]{0.49\textwidth}
		\centering
		\tikzsetnextfilename{barbara_noisy}
		\begin{tikzpicture}[spy using outlines={%
			every spy in node/.append style={draw = none},%
			every spy on node/.append style={white},connect spies}]
		\node[anchor=south west,inner sep=0pt] (image) at (0,0){\includegraphics[width=.98\textwidth]{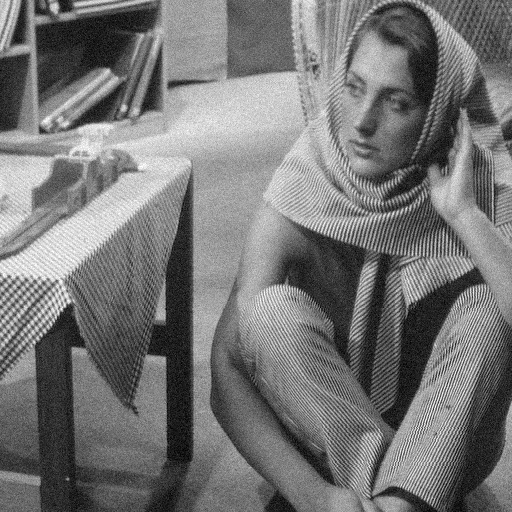}};
		\begin{scope}[x={($ (image.south east) - (image.south west) $ )},%
		y={( $ (image.north west) - (image.south west)$ )},%
		shift={(image.south west)}]
		\node (spy1) at (100/512,200/512) {};
		\coordinate (spy1to) at (0.25,-0.225);
		\spy [rectangle,width=0.45\textwidth, height=.35\textwidth,draw=white,magnification=1.6] on (spy1) in node at (spy1to);
		\node (spy2) at (370/512,120/512) {};
		\coordinate (spy2to) at (0.75,-0.225);
		\spy [rectangle,width=0.45\textwidth, height=.35\textwidth,draw=white,magnification=1.6] on (spy2) in node at (spy2to);
		\end{scope}
		\end{tikzpicture} 
		\caption{PSNR: $28.1241$, SSIM: $0.7140$ }
	\end{subfigure}	\\
	
	\vspace*{0.15cm}
	\begin{subfigure}[t]{0.49\textwidth}
		\centering
		\tikzsetnextfilename{barbara_normal}
		\begin{tikzpicture}[spy using outlines={%
			every spy in node/.append style={draw = none},%
			every spy on node/.append style={white},connect spies}]
		\node[anchor=south west,inner sep=0pt] (image) at (0,0){\includegraphics[width=.98\textwidth]{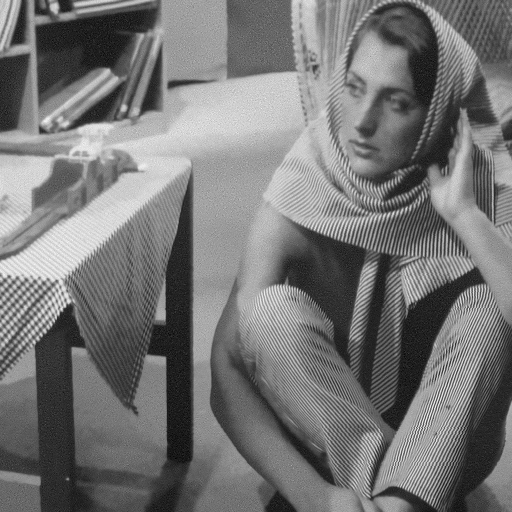}};
		\begin{scope}[x={($ (image.south east) - (image.south west) $ )},%
		y={( $ (image.north west) - (image.south west)$ )},%
		shift={(image.south west)}]
		\node (spy1) at (100/512,200/512) {};
		\coordinate (spy1to) at (0.25,-0.225);
		\spy [rectangle,width=0.45\textwidth, height=.35\textwidth,draw=white,magnification=1.6] on (spy1) in node at (spy1to);
		\node (spy2) at (370/512,120/512) {};
		\coordinate (spy2to) at (0.75,-0.225);
		\spy [rectangle,width=0.45\textwidth, height=.35\textwidth,draw=white,magnification=1.6] on (spy2) in node at (spy2to);
		\end{scope}
		\end{tikzpicture} 
		\caption{PSNR: $31.2445$, SSIM: $0.7873$}
	\end{subfigure}
	\begin{subfigure}[t]{0.49\textwidth}
		\centering
		\tikzsetnextfilename{barbara_our}
		\begin{tikzpicture}[spy using outlines={%
			every spy in node/.append style={draw = none},%
			every spy on node/.append style={white},connect spies}]
		\node[anchor=south west,inner sep=0pt] (image) at (0,0){\includegraphics[width=.98\textwidth]{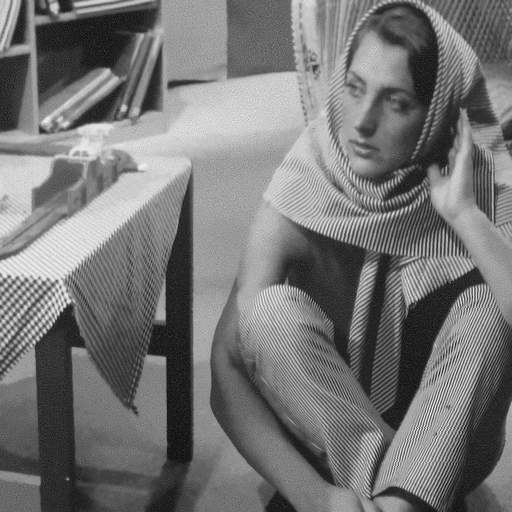}};
		\begin{scope}[x={($ (image.south east) - (image.south west) $ )},%
		y={( $ (image.north west) - (image.south west)$ )},%
		shift={(image.south west)}]
		\node (spy1) at (100/512,200/512) {};
		\coordinate (spy1to) at (0.25,-0.225);
		\spy [rectangle,width=0.45\textwidth, height=.35\textwidth,draw=white,magnification=1.6] on (spy1) in node at (spy1to);
		\node (spy2) at (370/512,120/512) {};
		\coordinate (spy2to) at (0.75,-0.225);
		\spy [rectangle,width=0.45\textwidth, height=.35\textwidth,draw=white,magnification=1.6] on (spy2) in node at (spy2to);
		\end{scope}
		\end{tikzpicture} 
		\caption{PSNR: $31.3024$, SSIM: $0.7898$ }
	\end{subfigure}	
	\caption[]{Denoising of the test image (top left) corrupted with additive Gaussian noise ($\nu=\infty$, $\sigma= 10$) (top right) 
		using the nonlocal MMSE algorithm~\cite{LBM13} (bottom left) and our nonlocal GMMF (bottom right).}
	\label{Fig:examples_Gaussian}	
\end{figure}

\begin{Remark}[Robustness of Parameters]
		Our algorithm depends on several parameters, namely  the patch width $s$, the size of the search
		window $w$, and the number of similar patches $n$. An extensive grid search revealed 
		that the results are rather robust towards small changes in the parameters, 
		and patch sizes between $3\times 3$ and $5\times 5$ as well as $n=40$ to $n=50$ samples yield results that are visually indistinguishable and have similar PSNR values. 
		The size of the search window needs to be adapted to the patch size and the number of samples; it has to be large enough 
		to guarantee that enough similar patches can be found. As a rule of thumb, the stronger the noise, the larger we choose the patch size.
\end{Remark}

\paragraph{Wrapped Cauchy Noise.}
Next, 
we apply Algorithm~\ref{alg:wrapped_Cauchy} to denoise $\SP^1$-valued images $f\colon \GG\to \SP^1$ corrupted by wrapped Cauchy noise, 
\begin{equation}
f_i =  (u_i + \gamma \eta) \modulo 2\pi,\qquad \eta\sim C(0,\gamma),\, \gamma>0,\quad i\in \GG,
\end{equation} 
where we chose $\gamma = 0.1$ corresponding to a moderate noise level, which yields $\rho = \e^{-\gamma} \approx 0.9048$. 
The original image as well as the noisy image are given in the top row of Figure~\ref{Fig:examples_wrapped_Cauchy}. The similarity measure to find similar patches 
is in this case derived from the density of the wrapped Cauchy distribution and it is given by
\begin{equation*}
S(p,q) =\frac{(1-\rho)^4}{\Bigl(1 + \rho^2 -2\rho \cos\left(\frac{p_i-q_i}{2}\right) \Bigr)^{2}}, 
\end{equation*}
which leads to the distance
\begin{equation*}
d(p,q) = \sum_{i=1}^t \log \Biggl(1+\rho^2-2\rho \cos\left(\frac{p_i-q_i}{2}\right)\Biggr).
\end{equation*}
Here, we chose $n=50$ patches of size $5\times 5$  as samples, but this time extracted only their centers, estimated the parameters and restored the image pixelwise. 
We compare our approach with the variational method using an $L_2$ data term and a first and second order TV-regularizer in~\cite{BLSW14} and the nonlocal denoising algorithm based 
on second order statistic~\cite{LNPS16} (NL-MMSE). The results together with the mean-squared reconstruction error are given in the bottom row of Figure~\ref{Fig:examples_wrapped_Cauchy}. 
Both the variational as well as the second order statistical method cannot cope with the impulsiveness of the wrapped Cauchy noise such that several wrong pixels remain, 
which is in particular visible in the background. Furthermore, the edges of the color squares and the transitions in the ellipse and in the circle are rather fringy. 
On the contrary, our method restores the image very well, if at all a slight grain can be observed in the color squares which is due to the pixelwise denoising that 
does not regard neighboring pixels appropriately.

\begin{figure}[tp]
\centering	
\begin{subfigure}[t]{0.32\textwidth}
	\centering
	\tikzsetnextfilename{S1_orig}
	\begin{tikzpicture}[spy using outlines={%
		every spy in node/.append style={draw = none},%
		every spy on node/.append style={white},connect spies}]
	\node[anchor=south west,inner sep=0pt] (image) at (0,0){\includegraphics[width=.98\textwidth]{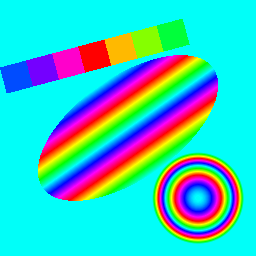}};
	\begin{scope}[x={($ (image.south east) - (image.south west) $ )},%
	y={( $ (image.north west) - (image.south west)$ )},%
	shift={(image.south west)}]
	\node (spy1) at (110/512,160/512) {};
	\coordinate (spy1to) at (0.25,-0.255);
	\spy [rectangle,width=0.35\textwidth, height=.45\textwidth,draw=white,magnification=2.0] on (spy1) in node at (spy1to);
	\node (spy2) at (360/512,100/512) {};
	\coordinate (spy2to) at (0.725,-0.25);
	\spy [rectangle,width=0.45\textwidth, height=.35\textwidth,draw=white,magnification=2.0] on (spy2) in node at (spy2to);
		\node (spy3) at (200/512,390/512) {};
		\coordinate (spy3to) at (0.5,-0.69);
		\spy [rectangle,width=0.45\textwidth, height=.35\textwidth,draw=white,magnification=1.4] on (spy3) in node at (spy3to);	
	\end{scope}
	\end{tikzpicture} 
	\caption{$\epsilon= 0$ }
\end{subfigure}	
\hspace*{1cm}	
\begin{subfigure}[t]{0.32\textwidth}
	\centering
	\tikzsetnextfilename{S1_noisy}
	\begin{tikzpicture}[spy using outlines={%
		every spy in node/.append style={draw = none},%
		every spy on node/.append style={white},connect spies}]
	\node[anchor=south west,inner sep=0pt] (image) at (0,0){\includegraphics[width=.98\textwidth]{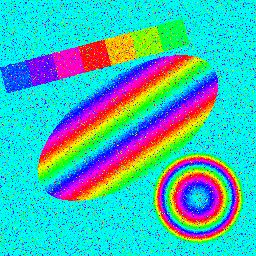}};
	\begin{scope}[x={($ (image.south east) - (image.south west) $ )},%
	y={( $ (image.north west) - (image.south west)$ )},%
	shift={(image.south west)}]
	\node (spy1) at (110/512,160/512) {};
	\coordinate (spy1to) at (0.25,-0.255);
	\spy [rectangle,width=0.35\textwidth, height=.45\textwidth,draw=white,magnification=2.0] on (spy1) in node at (spy1to);
	\node (spy2) at (360/512,100/512) {};
	\coordinate (spy2to) at (0.725,-0.25);
	\spy [rectangle,width=0.45\textwidth, height=.35\textwidth,draw=white,magnification=2.0] on (spy2) in node at (spy2to);
		\node (spy3) at (200/512,390/512) {};
		\coordinate (spy3to) at (0.5,-0.69);
		\spy [rectangle,width=0.45\textwidth, height=.35\textwidth,draw=white,magnification=1.4] on (spy3) in node at (spy3to);	
	\end{scope}
	\end{tikzpicture} 
		\caption{$\epsilon= 5.3239$ }
\end{subfigure}	\\

\vspace*{0.15cm}
\begin{subfigure}[t]{0.32\textwidth}
	\centering
	\tikzsetnextfilename{S1_TV}
	\begin{tikzpicture}[spy using outlines={%
		every spy in node/.append style={draw = none},%
		every spy on node/.append style={white},connect spies}]
	\node[anchor=south west,inner sep=0pt] (image) at (0,0){\includegraphics[width=.98\textwidth]{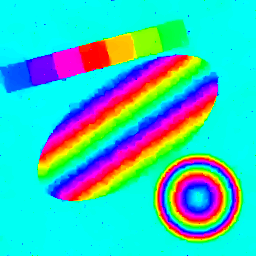}};
	\begin{scope}[x={($ (image.south east) - (image.south west) $ )},%
	y={( $ (image.north west) - (image.south west)$ )},%
	shift={(image.south west)}]
	\node (spy1) at (110/512,160/512) {};
	\coordinate (spy1to) at (0.25,-0.255);
	\spy [rectangle,width=0.35\textwidth, height=.45\textwidth,draw=white,magnification=2.0] on (spy1) in node at (spy1to);
	\node (spy2) at (360/512,100/512) {};
	\coordinate (spy2to) at (0.725,-0.25);
	\spy [rectangle,width=0.45\textwidth, height=.35\textwidth,draw=white,magnification=2.0] on (spy2) in node at (spy2to);
		\node (spy3) at (200/512,390/512) {};
		\coordinate (spy3to) at (0.5,-0.69);
		\spy [rectangle,width=0.45\textwidth, height=.35\textwidth,draw=white,magnification=1.4] on (spy3) in node at (spy3to);	
	\end{scope}
	\end{tikzpicture} 
		\caption{$\epsilon= 0.0133$ }
\end{subfigure}
\begin{subfigure}[t]{0.32\textwidth}
	\centering
	\tikzsetnextfilename{S1_MMSE}
	\begin{tikzpicture}[spy using outlines={%
		every spy in node/.append style={draw = none},%
		every spy on node/.append style={white},connect spies}]
	\node[anchor=south west,inner sep=0pt] (image) at (0,0){\includegraphics[width=.98\textwidth]{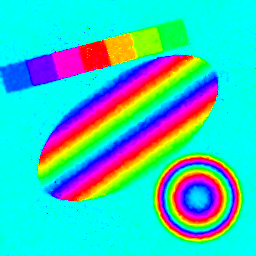}};
	\begin{scope}[x={($ (image.south east) - (image.south west) $ )},%
	y={( $ (image.north west) - (image.south west)$ )},%
	shift={(image.south west)}]
	\node (spy1) at (110/512,160/512) {};
	\coordinate (spy1to) at (0.25,-0.255);
	\spy [rectangle,width=0.35\textwidth, height=.45\textwidth,draw=white,magnification=2.0] on (spy1) in node at (spy1to);
	\node (spy2) at (360/512,100/512) {};
	\coordinate (spy2to) at (0.725,-0.25);
	\spy [rectangle,width=0.45\textwidth, height=.35\textwidth,draw=white,magnification=2.0] on (spy2) in node at (spy2to);
	\node (spy3) at (200/512,390/512) {};
	\coordinate (spy3to) at (0.5,-0.69);
	\spy [rectangle,width=0.45\textwidth, height=.35\textwidth,draw=white,magnification=1.4] on (spy3) in node at (spy3to);	
	\end{scope}
	\end{tikzpicture} 
		\caption{$\epsilon= 0.0090$ }
\end{subfigure}	
\begin{subfigure}[t]{0.32\textwidth}
	\centering
	\tikzsetnextfilename{S1_our}
	\begin{tikzpicture}[spy using outlines={%
		every spy in node/.append style={draw = none},%
		every spy on node/.append style={white},connect spies}]
	\node[anchor=south west,inner sep=0pt] (image) at (0,0){\includegraphics[width=.98\textwidth]{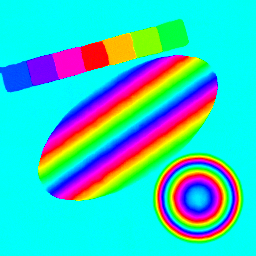}};
	\begin{scope}[x={($ (image.south east) - (image.south west) $ )},%
	y={( $ (image.north west) - (image.south west)$ )},%
	shift={(image.south west)}]
	\node (spy1) at (110/512,160/512) {};
	\coordinate (spy1to) at (0.25,-0.255);
	\spy [rectangle,width=0.35\textwidth, height=.45\textwidth,draw=white,magnification=2.0] on (spy1) in node at (spy1to);
	\node (spy2) at (360/512,100/512) {};
	\coordinate (spy2to) at (0.725,-0.25);
	\spy [rectangle,width=0.45\textwidth, height=.35\textwidth,draw=white,magnification=2.0] on (spy2) in node at (spy2to);		
	\node (spy3) at (200/512,390/512) {};
	\coordinate (spy3to) at (0.5,-0.69);
	\spy [rectangle,width=0.45\textwidth, height=.35\textwidth,draw=white,magnification=1.4] on (spy3) in node at (spy3to);		
	\end{scope}
	\end{tikzpicture} 
		\caption{$\epsilon=0.0037$ }
\end{subfigure}	

\caption[]{Denoising of an $\SP^1$-valued image (top left) corrupted with additive wrapped Cauchy noise ($a = 0$, $\rho= 0.1$) (top right) 
using the variational method~\cite{BLSW14} (bottom left), the nonlocal MMSE method~\cite{LNPS16} (bottom middle) 
and our nonlocal GMMF (bottom right).  }
\label{Fig:examples_wrapped_Cauchy}	
\end{figure}

\paragraph{Denoising of InSAR Data}
Finally, we  apply our denoising approach to a  real-world data set  given in~\cite{RPG97}\footnote{The data is
available online at %
\url{https://earth.esa.int/workshops/ers97/program-details/speeches/rocca-et-al/}%
}. It consists of an interferometric synthetic aperture radar (InSAR) image recorded in 1991 by the ERS-1 satellite capturing topographical information from the Mount Vesuvius. 
InSAR is a radar technique used in geodesy and remote sensing. Based on two or more synthetic
aperture radar (SAR) images, maps of surface deformation or digital elevation are generated,
using differences in the phase of the waves returning to an aircraft or satellite. The technique can
potentially measure millimeter-scale changes in deformation over spans of days to years. It has
applications in the geophysical monitoring of natural hazards, for example earthquakes, volcanoes
and landslides, and in structural engineering, in particular monitoring of subsidence and structural
stability. InSAR produces phase-valued images, i.e. in each pixel the measurement lies on the
circle $\mathbb{S}^1$. The recorded image is shown in Figure~\ref{Fig:examples_InSar} top left. 
While the contour lines of the   
Vesuvius are clearly visible, the image suffers in particular in the middle and at the boundaries  from strong noise. 
The top right image in Figure~\ref{Fig:examples_InSar}  depicts the denoising result   using the variational approach in~\cite{BLSW14}, 
while in the bottom row we show the results obtained with the nonlocal MMSE approach~\cite{LNPS16} (left) and with  our method (right), where we used  the parameters $\gamma = 0.1$, $n=50$ and $5\times 5$ patches. 
While both methods restore the outer contour lines very well, our approach yields a sharper image and preserves much more details,  
for instance the fine contour lines in the middle of the image. 

\begin{figure}[htp]
\centering	
\begin{subfigure}[t]{0.32\textwidth}
	\centering
	\tikzsetnextfilename{InSAR_orig}
	\begin{tikzpicture}[spy using outlines={%
		every spy in node/.append style={draw = none},%
		every spy on node/.append style={black},connect spies}]
	\node[anchor=south west,inner sep=0pt] (image) at (0,0){\includegraphics[width=.98\textwidth]{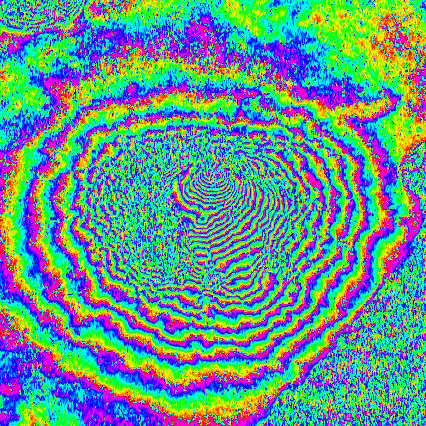}};
	\begin{scope}[x={($ (image.south east) - (image.south west) $ )},%
	y={( $ (image.north west) - (image.south west)$ )},%
	shift={(image.south west)}]
	\node (spy1) at (100/426,120/426) {};
	\coordinate (spy1to) at (0.3,-0.25);
	\spy [rectangle,width=0.45\textwidth, height=.35\textwidth,draw=black,magnification=2.0] on (spy1) in node at (spy1to);
	\node (spy2) at (60/426,377/426) {};
	\coordinate (spy2to) at (0.5,1.225);
	\spy [rectangle,width=0.45\textwidth, height=.35\textwidth,draw=black,magnification=2.0] on (spy2) in node at (spy2to);		
	\node (spy3) at (250/426,200/426) {};
	\coordinate (spy3to) at (0.775,-0.25);
	\spy [rectangle,width=0.3\textwidth, height=.4\textwidth,draw=black,magnification=2] on (spy3) in node at (spy3to);	
	\end{scope}
	\end{tikzpicture} 
		\caption{Original InSAR image.}
\end{subfigure}
\hspace*{1cm}
\begin{subfigure}[t]{0.32\textwidth}
	\centering
	\tikzsetnextfilename{S1_CPPS}
	\begin{tikzpicture}[spy using outlines={%
		every spy in node/.append style={draw = none},%
		every spy on node/.append style={black},connect spies}]
	\node[anchor=south west,inner sep=0pt] (image) at (0,0){\includegraphics[width=.98\textwidth]{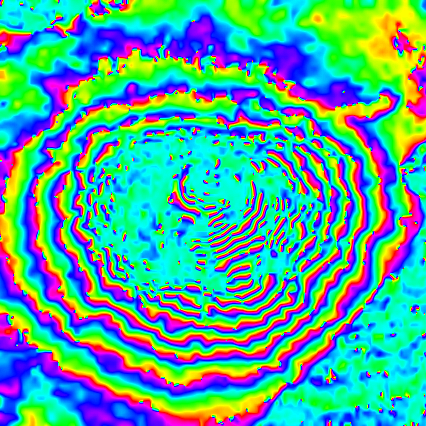}};
	\begin{scope}[x={($ (image.south east) - (image.south west) $ )},%
	y={( $ (image.north west) - (image.south west)$ )},%
	shift={(image.south west)}]
	\node (spy1) at (100/426,120/426) {};
	\coordinate (spy1to) at (0.3,-0.25);
	\spy [rectangle,width=0.45\textwidth, height=.35\textwidth,draw=black,magnification=2.0] on (spy1) in node at (spy1to);
	\node (spy2) at (60/426,377/426) {};
	\coordinate (spy2to) at (0.5,1.225);
	\spy [rectangle,width=0.45\textwidth, height=.35\textwidth,draw=black,magnification=2.0] on (spy2) in node at (spy2to);		
	\node (spy3) at (250/426,200/426) {};
	\coordinate (spy3to) at (0.775,-0.25);
	\spy [rectangle,width=0.3\textwidth, height=.4\textwidth,draw=black,magnification=2] on (spy3) in node at (spy3to);		
	\end{scope}
	\end{tikzpicture} 
		\caption{Variational method~\cite{BLSW14}.}
\end{subfigure}	

\hspace{1cm}

\begin{subfigure}[t]{0.32\textwidth}
	\centering
	\tikzsetnextfilename{InSAR_NLMMSE}
	\begin{tikzpicture}[spy using outlines={%
		every spy in node/.append style={draw = none},%
		every spy on node/.append style={black},connect spies}]
	\node[anchor=south west,inner sep=0pt] (image) at (0,0){\includegraphics[width=.98\textwidth]{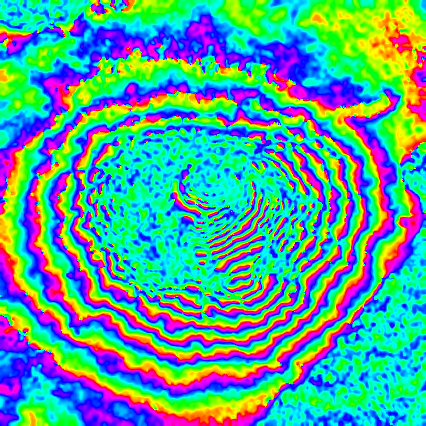}};
	\begin{scope}[x={($ (image.south east) - (image.south west) $ )},%
	y={( $ (image.north west) - (image.south west)$ )},%
	shift={(image.south west)}]
	\node (spy1) at (100/426,120/426) {};
	\coordinate (spy1to) at (0.3,-0.25);
	\spy [rectangle,width=0.45\textwidth, height=.35\textwidth,draw=black,magnification=2.0] on (spy1) in node at (spy1to);
	\node (spy2) at (60/426,377/426) {};
	\coordinate (spy2to) at (0.5,1.225);
	\spy [rectangle,width=0.45\textwidth, height=.35\textwidth,draw=black,magnification=2.0] on (spy2) in node at (spy2to);		
	\node (spy3) at (250/426,200/426) {};
	\coordinate (spy3to) at (0.775,-0.25);
	\spy [rectangle,width=0.3\textwidth, height=.4\textwidth,draw=black,magnification=2] on (spy3) in node at (spy3to);		
	\end{scope}
	\end{tikzpicture} 
			\caption{Nonlocal MMSE~\cite{LNPS16}.}
\end{subfigure}
\hspace*{1cm}
\begin{subfigure}[t]{0.32\textwidth}
	\centering
	\tikzsetnextfilename{InSAR_NGMMF}
	\begin{tikzpicture}[spy using outlines={%
		every spy in node/.append style={draw = none},%
		every spy on node/.append style={black},connect spies}]
	\node[anchor=south west,inner sep=0pt] (image) at (0,0){\includegraphics[width=.98\textwidth]{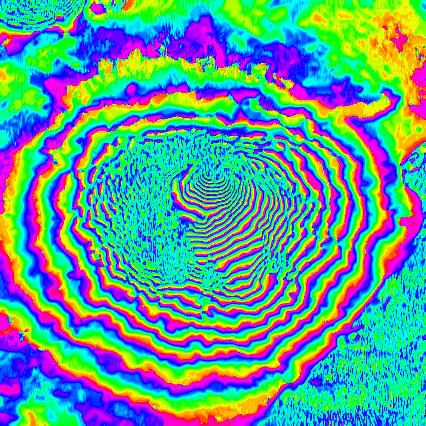}};
	\begin{scope}[x={($ (image.south east) - (image.south west) $ )},%
	y={( $ (image.north west) - (image.south west)$ )},%
	shift={(image.south west)}]
	\node (spy1) at (100/426,120/426) {};
	\coordinate (spy1to) at (0.3,-0.25);
	\spy [rectangle,width=0.45\textwidth, height=.35\textwidth,draw=black,magnification=2.0] on (spy1) in node at (spy1to);
	\node (spy2) at (60/426,377/426) {};
	\coordinate (spy2to) at (0.5,1.225);
	\spy [rectangle,width=0.45\textwidth, height=.35\textwidth,draw=black,magnification=2.0] on (spy2) in node at (spy2to);		
	\node (spy3) at (250/426,200/426) {};
	\coordinate (spy3to) at (0.775,-0.25);
	\spy [rectangle,width=0.3\textwidth, height=.4\textwidth,draw=black,magnification=2] on (spy3) in node at (spy3to);		
	\end{scope}
	\end{tikzpicture} 
			\caption{GMMF.}
\end{subfigure}	
\caption[]{Denoising of an InSAR image  of the Mount Vesuvius (top left)   using the variational method~\cite{BLSW14} (top right), the nonlocal MMSE approach~\cite{LNPS16} and our nonlocal GMMF (bottom right).  }
\label{Fig:examples_InSar}	
\end{figure}

In a second experiment, we examine the influence of the denoising on the reconstruction of the absolute phase. 
In order to do so, we use the phase unwrapping method PUMA proposed in~\cite{BV07}, 
which recasts the problem as a max flow-min cut problem and is among the current state-of-the-art algorithms for phase unwrapping. We used the code provided by the authors of~\cite{BV07} 
with the default parameters choices. The results of the corresponding images of Figure~\ref{Fig:examples_InSar} are given in Figure~\ref{Fig:unwrapping}. 
While the absolute phase of the original image is strongly affected by the noise, the results based on the TV-regularized image are  in some parts oversmoothed, 
so that the reconstruction does not provide any details in those areas. On the other hand, the reconstructions based on the NL-MMSE denoised image and based on our method provide much better results, 
where our method leads to a slightly finer resolution.

\begin{figure}[htp]
	\centering
		\begin{subfigure}[t]{0.49\textwidth}
		\centering
		\includegraphics[width=\textwidth]{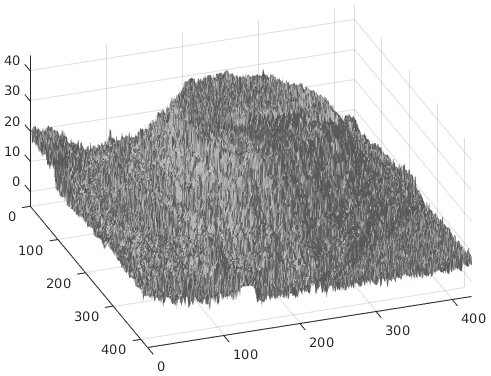}
			\caption{Original InSAR image.}		
	\end{subfigure}
		\begin{subfigure}[t]{0.49\textwidth}
			\centering
			\includegraphics[width=\textwidth]{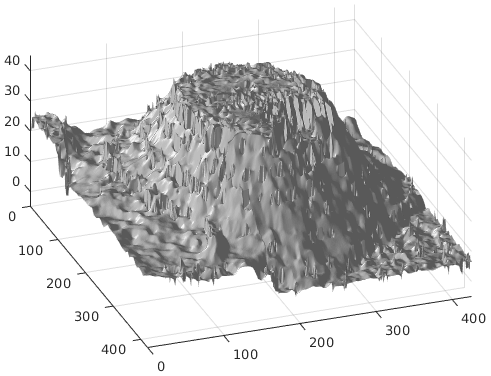}	
						\caption{Variational method~\cite{BLSW14}.}			
		\end{subfigure}

			\begin{subfigure}[t]{0.49\textwidth}
				\centering
				\includegraphics[width=\textwidth]{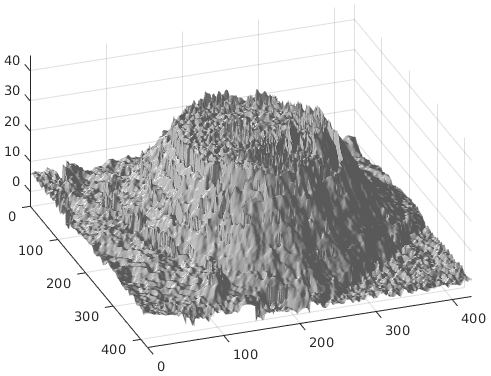}
							\caption{Nonlocal MMSE~\cite{LNPS16}.}				
			\end{subfigure}
					\begin{subfigure}[t]{0.49\textwidth}
				\centering
				\includegraphics[width=\textwidth]{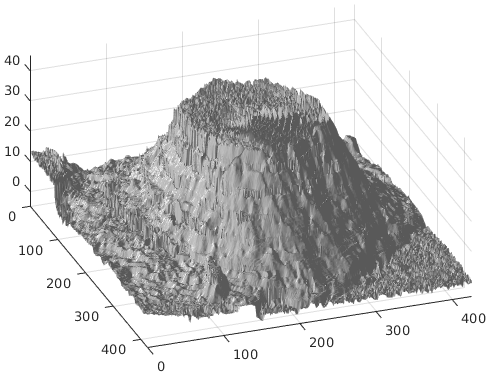}	
											\caption{GMMF.}					
			\end{subfigure}		
\caption[]{Reconstruction of the absolute phase images from Figure~\ref{Fig:examples_InSar} using the reconstruction algorithm PUMA~\cite{BV07}.  }\label{Fig:unwrapping}
\end{figure}

\section{Conclusion} \label{sec:conclusions}
We introduced a generalized multivariate myriad filter based on (weighted) ML estimation of the multivariate Student-$t$ distribution 
and illustrated  its usage in a nonlocal robust denoising approach. 
Furthermore, we showed how a special case of our algorithm 
can be related to projected normal distributions on the sphere $\SP^{d-1}$ and for $d=2$ further to the wrapped Cauchy distribution on the circle $\SP^1$, 
which gives rise to robust denoising strategies for $\mathbb S^1$-valued images.

There are different directions for future work: 
First, we would like to extend our analysis to the case 
that additionally the degrees of freedom parameter $\nu$ is unknown and to SMM. 
Although an EM algorithm has already been derived for this case in~\cite{LLT89}, to the best of our knowledge
there do not exist results concerning existence and/or uniqueness of the joint ML estimator. 

Concerning our denoising approach, fine tuning steps
as discussed in~\cite{LCBM12} such as aggregations of patches~\cite{SDB18},
the use of an oracle image or a variable patch size to better
cope   with textured and homogeneous image regions may   improve the denoising results. 
Further, in all our examples we used uniform weights, but
weights based for instance on spatial distance or similarity would make sense as well. 
Another question is how to incorporate linear operators
(blur, missing pixels) into the image restoration. To this end, SMM could be used as priors within variational models. 

\subsection*{Acknowledgments} 
Funding by the ANR-DFG project {SUPREMATIM} STE 571/14-1 is gratefully acknowledged. 
Furthermore, we thank J. Delon for fruitful discussions and 
J. M. Bioucas-Dias for providing the code of the phase unwrapping algorithm, see \cite{BV07}.

\appendix
\section{Appendix}\label{App1}
{\begin{small}
		Proof of Lemma \ref{Prop:wrapped_Cauchy}.
(i)	By definition of $\Theta$ in terms of  $\Phi$ we have for the corresponding probability density functions
	\begin{align*}
	f_\Theta (\theta) 
	&= f_{2\Phi} (\theta) + f_{2\Phi} \left(\theta- \mathrm{sgn}(\theta) 2 \pi \right) 
	= \frac12 \Biggl[ f_\Phi \left(\frac{\theta}{2} \right) + f_\Phi \left( \frac{\theta}{2} - \mathrm{sgn}(\theta) \pi \right) \Biggr]\\
	&= \frac12 \left[ f_\Phi \left(\frac{\theta}{2} \right) + f_\Phi \Biggl(\left(\frac{\theta}{2} + \pi\right)\modulo 2\pi \Biggr)\right] = f_\Phi \left( \frac{\theta}{2} \right).
	\end{align*}
	Therefore we have to show that $f_\Phi \left( \frac{\theta}{2} \right) = g_w (\theta|a,\gamma)$. We parametrize 
	$
	x = \begin{pmatrix}
	\cos(\phi)\\
	\sin(\phi)
	\end{pmatrix}
	$
	and using relations of trigonometric functions, we obtain
	\begin{align*}
	x^\tT \Sigma^{-1}x 
	& = \frac{1}{\sigma_{11}\sigma_{22} - \sigma_{12}^2}\bigl(\sigma_{22} x_1^2 - 2\sigma_{12} x_1 x_2 + \sigma_{11} x_2^2\bigr)\\
	& = \frac{1}{\sigma_{11}\sigma_{22} - \sigma_{12}^2} \bigl(\sigma_{22} \cos^2(\phi) + \sigma_{11} \sin^2(\phi) - 2\sigma_{12} \cos(\phi) \sin(\phi) \bigr)\\
	& = \frac{1}{\sigma_{11}\sigma_{22} - \sigma_{12}^2} \left(\frac{1}{2} (\sigma_{11} + \sigma_{22}) - \frac{1}{2}(\sigma_{11}-\sigma_{22})\cos(2\phi) - \sigma_{12} \sin(2\phi)\right)\\
	& = \frac{\frac{1}{2} (\sigma_{11} + \sigma_{22})}{\sigma_{11}\sigma_{22}- \sigma_{12}^2}
	\left(1 - \frac{\sigma_{11}-\sigma_{22}}{\sigma_{11} + \sigma_{22}}\cos(2\phi) - \frac{2\sigma_{12}}{\sigma_{11} + \sigma_{22}} \sin(2\phi)\right)\\
	& = \frac{\frac{1}{2} \tr(\Sigma)}{|\Sigma|}
	\left(1 - \frac{\sigma_{11}-\sigma_{22}}{\sigma_{11} + \sigma_{22}}\cos(2\phi) - \frac{2\sigma_{12}}{\sigma_{11} + \sigma_{22}} \sin(2\phi)\right).
	\end{align*}
	Setting 
	$\zeta_1 = \frac{\sigma_{11}-\sigma_{22}}{\sigma_{11} + \sigma_{22}}$ and $\zeta_2 =\frac{2\sigma_{12}}{\sigma_{11}+\sigma_{22}}$
	such that
	\begin{equation*}
	\sqrt{1-\zeta_1^2-\zeta_2^2} = \frac{2\sqrt{\sigma_{11}\sigma_{22}-\sigma_{12}^2}}{\sigma_{11} + \sigma_{22}}
	= \frac{\sqrt{|\Sigma|}}{\frac{1}{2}\tr(\Sigma)}
	\end{equation*}
	we get
	\begin{equation}\label{nenner}
	x^\tT \Sigma^{-1}x  
	= 
	\frac{1-\zeta_1 \cos(2\phi) -\zeta_2 \sin (2\phi)}{ \sqrt{|\Sigma|} \sqrt{1-\zeta_1^2-\zeta_2^2} }
	\end{equation}
	and
	\begin{equation*}
	f_\Phi\left( \frac{\theta}{2} \right) = \frac{1}{2\pi}  \frac{\sqrt{1-\zeta_1^2-\zeta_2^2}}{1-\zeta_1\cos(\theta) - \xi_2\sin(\theta)}.
	\end{equation*}
	This has exactly the form \eqref{WC} of $g_w$ if we identify
	\begin{align}
	\xi_1 = \frac{2\rho}{1+\rho^2} \cos(a) = \zeta_1 = \frac{\sigma_{11}-\sigma_{22}}{\sigma_{11} + \sigma_{22}},\quad
	\xi_2 = \frac{2\rho}{1+\rho^2} \sin(a) = \zeta_2 = \frac{2\sigma_{12}}{\sigma_{11}+\sigma_{22}}.
	\end{align}
	Squaring and adding these equations leads by observing that $\rho < 1$ to \eqref{rho} and by devision of the equations to \eqref{a}.
	\\[1ex]
(ii) We first compute the density of $\frac{\Theta}{2}+ \pi \Xi$ as
	\begin{align*}
		f_{\frac{\Theta}{2}+ \pi \Xi}(\theta) = \frac{1}{4} 
		f_{\frac{\Theta}{2}}(\theta + \pi)\mathbbm{1}_{\left[-\frac{3\pi }{2},-\frac{\pi }{2}\right)}(\theta)+\frac{1}{2}f_{\frac{\Theta}{2}}(\theta  )
		\mathbbm{1}_{\left[-\frac{ \pi }{2}, \frac{\pi }{2}\right]}(\theta)+\frac{1}{4} f_{\frac{\Theta}{2}}(\theta - \pi)\mathbbm{1}_{\left[ \frac{\pi }{2}, \frac{3\pi }{2}\right)}(\theta).
	\end{align*}
	Therewith, the density of $\left(\frac{\Theta}{2}+ \pi \Xi\right)_{2\pi}:=\left(\frac{\Theta}{2}+ \pi \Xi\right)\modulo 2\pi$ is given by
	$$
		f_{\left(\frac{\Theta}{2}+ \pi \Xi\right)_{2\pi}}(\theta)
	 =  	{f_{\frac{\Theta}{2}+ \pi \Xi}(\theta)}\mathbbm{1}_{\left[ -\pi, \pi\right)}(\theta)+ 	\underbrace{f_{\frac{\Theta}{2}+ \pi \Xi}(\theta+2\pi)\mathbbm{1}_{\left[ -\pi,-\frac{\pi }{2}\right)}(\theta)}_{(\ast)}+	\underbrace{f_{\frac{\Theta}{2}+ \pi \Xi}(\theta-2\pi )\mathbbm{1}_{\left[ \frac{\pi }{2}, \pi\right)}(\theta)}_{(\ast\ast)}.
	$$
	We calculate
	\begin{align*}
		(\ast) 
		=& \tfrac{1}{4} f_{\frac{\Theta}{2}}(\theta+ 2\pi + \pi)\mathbbm{1}_{\left[-\frac{3\pi }{2},-\frac{\pi }{2}\right)}(\theta+ 2\pi)\mathbbm{1}_{\left[ -\pi,-\frac{\pi }{2}\right)}(\theta)
		 +\tfrac{1}{2}f_{\frac{\Theta}{2}}(\theta + 2\pi  )\mathbbm{1}_{\left[-\frac{ \pi }{2}, \frac{\pi }{2}\right)}(\theta+ 2\pi)\mathbbm{1}_{\left[ -\pi,-\frac{\pi }{2}\right)}(\theta)\\
		&+
		\tfrac{1}{4} f_{\frac{\Theta}{2}}(\theta+ 2\pi - \pi)\mathbbm{1}_{\left[ \frac{\pi }{2}, \frac{3\pi }{2}\right)}(\theta+ 2\pi)\mathbbm{1}_{\left[ -\pi,-\frac{\pi }{2}\right)}(\theta)
		 = 
		 \tfrac{1}{4} f_{\frac{\Theta}{2}}(\theta+ \pi)\mathbbm{1}_{\left[ -\pi,-\frac{\pi }{2}\right)}(\theta)
	\\
		(\ast \ast) =& \tfrac{1}{4} f_{\frac{\Theta}{2}}(\theta- 2\pi + \pi)\mathbbm{1}_{\left[-\frac{3\pi }{2},-\frac{\pi }{2}\right)}(\theta- 2\pi)\mathbbm{1}_{\left[ \frac{\pi }{2},\pi\right)}(\theta)
		 +\tfrac{1}{2}f_{\frac{\Theta}{2}}(\theta - 2\pi  )\mathbbm{1}_{\left[-\frac{ \pi }{2}, \frac{\pi }{2}\right)}(\theta- 2\pi)\mathbbm{1}_{\left[ \frac{\pi }{2},\pi\right)}(\theta)\\
		&+\tfrac{1}{4} f_{\frac{\Theta}{2}}(\theta- 2\pi - \pi)\mathbbm{1}_{\left[ \frac{\pi }{2}, \frac{3\pi }{2}\right)}(\theta- 2\pi)\mathbbm{1}_{\left[ \frac{\pi }{2},\pi\right)}(\theta)
	 = 	\tfrac{1}{4} f_{\frac{\Theta}{2}}(\theta-  \pi ) \mathbbm{1}_{\left[ \frac{\pi }{2},\pi\right)}(\theta),
		\end{align*}
	which results in
	\begin{align*}
		 	f_\Phi(\theta) &= f_{\left(\frac{\Theta}{2}+ \pi \Xi\right)\modulo 2\pi}(\theta) \\
		 	&= \frac{1}{4} f_{\frac{\Theta}{2}}(\theta + \pi)\mathbbm{1}_{\left[-\pi,-\frac{\pi }{2}\right)}(\theta)+\frac{1}{2}f_{\frac{\Theta}{2}}(\theta  )\mathbbm{1}_{\left[-\frac{ \pi }{2}, \frac{\pi }{2}\right]}(\theta)+\frac{1}{4} f_{\frac{\Theta}{2}}(\theta - \pi)\mathbbm{1}_{\left[ \frac{\pi }{2},\pi\right)}(\theta)\\
		 	& +\frac{1}{4} f_{\frac{\Theta}{2}}(\theta+ \pi)\mathbbm{1}_{\left[ -\pi,-\frac{\pi }{2}\right)}(\theta)+	\frac{1}{4} f_{\frac{\Theta}{2}}(\theta-  \pi ) \mathbbm{1}_{\left[ \frac{\pi }{2},\pi\right)}(\theta)\\
		 	& = \frac{1}{2} f_{\frac{\Theta}{2}}(\theta + \pi)\mathbbm{1}_{\left[-\pi,-\frac{\pi }{2}\right)}(\theta)+\frac{1}{2}f_{\frac{\Theta}{2}}(\theta  )\mathbbm{1}_{\left[-\frac{ \pi }{2}, \frac{\pi }{2}\right]}(\theta)+\frac{1}{2} f_{\frac{\Theta}{2}}(\theta - \pi)\mathbbm{1}_{\left[ \frac{\pi }{2},\pi\right)}(\theta)\\
		 	& = \frac{1}{2}f_{\frac{\Theta}{2}}(\theta  )\mathbbm{1}_{\left[-\frac{ \pi }{2}, \frac{\pi }{2}\right)}(\theta)+\frac{1}{2} f_{\frac{\Theta}{2}}\bigl((\theta + \pi)\modulo 2\pi\bigr)\mathbbm{1}_{\left[-\pi ,\pi\right)\setminus\left[- \frac{\pi }{2},\frac{\pi }{2}\right)}(\theta)\\
		 	& =f_{{\Theta}}(2\theta  )\mathbbm{1}_{\left[-\frac{ \pi }{2}, \frac{\pi }{2}\right)}(\theta)+f_{{\Theta}}\bigl((2\theta + \pi)\modulo 2\pi)\bigr)\mathbbm{1}_{\left[-\pi ,\pi\right)\setminus\left[- \frac{\pi }{2},\frac{\pi }{2}\right)}(\theta) .
	\end{align*}
	With the same identifications as in part (i) we obtain the assertion. \hfill $\Box$
\end{small}

\bibliographystyle{abbrv}
\bibliography{Student_t}

\begin{thebibliography}{10}

\bibitem{ALP2002}
A.~Antoniadis, D.~Leporini, and J.-C. Pesquet.
\newblock Wavelet thresholding for some classes of non-{G}aussian noise.
\newblock {\em Statistica Neerlandica}, 56(4):434--453, 2002.

\bibitem{BM18}
A.~Banerjee and P.~Maji.
\newblock Spatially constrained {S}tudent's $t$-distribution based mixture
  model for robust image segmentation.
\newblock {\em Journal of Mathematical Imaging Vision}, 60(3):355--381, 2018.

\bibitem{BLSW14}
R.~Bergmann, F.~Laus, G.~Steidl, and A.~Weinmann.
\newblock Second order differences of cyclic data and applications in
  variational denoising.
\newblock {\em SIAM Journal on Imaging Sciences}, 7(4):2916–2953, 2014.

\bibitem{BV07}
J.~M. Bioucas-Dias and G.~Valadao.
\newblock Phase unwrapping via graph cuts.
\newblock {\em IEEE Transactions on Image processing}, (3):698--709, 2007.

\bibitem{Byrne2017}
C.~L. Byrne.
\newblock {\em The {EM} Algorithm: Theory, Applications and Related Methods}.
\newblock Lecture Notes, University of Massachusetts, 2017.

\bibitem{CB02}
G.~Casella and R.~L. Berger.
\newblock {\em Statistical Inference}, volume~2.
\newblock Duxbury Pacific Grove, CA, 2002.

\bibitem{Ch00}
S.~Chr{\'e}tien and A.~O. Hero.
\newblock Kullback proximal algorithms for maximum-likelihood estimation.
\newblock {\em IEEE Transactions on Information Theory}, 46(5):1800--1810,
  2000.

\bibitem{CH08}
S.~Chr{\'e}tien and A.~O. Hero.
\newblock On {EM} algorithms and their proximal generalizations.
\newblock {\em ESAIM: Probability and Statistics}, 12:308--326, 2008.

\bibitem{DDT12}
C.-A. Deledalle, L.~Denis, and F.~Tupin.
\newblock How to compare noisy patches? {P}atch similarity beyond {G}aussian
  noise.
\newblock {\em International Journal of Computer Vision}, 99(1):86--102, 2012.

\bibitem{DLR77}
A.~P. Dempster, N.~M. Laird, and D.~B. Rubin.
\newblock Maximum likelihood from incomplete data via the {EM} algorithm.
\newblock {\em Journal of the Royal Statistical Society. Series B
  (Methodological)}, 39(1):1--38, 1977.

\bibitem{DHWMZ2019}
M.~Ding, T.~Huang, S.~Wang, J.~Mei, and X.~Zhao.
\newblock Total variation with overlapping group sparsity for deblurring images
  under {C}auchy noise.
\newblock {\em Applied Mathematics and Computation}, 341:128--147, 2019.

\bibitem{duembgen1998}
L.~D\"umbgen.
\newblock On {T}yler’s {$M$}-functional of scatter in high dimension.
\newblock {\em Annals of the Institute of Statistical Mathematics},
  50:471--491, 1998.

\bibitem{DPS2015}
L.~D\"umbgen, M.~Pauly, and T.~Schweizer.
\newblock {$M$}-functional of multivariate scatter.
\newblock {\em Statistics Surveys}, 9:32–--105, 2015.

\bibitem{DT2005}
L.~D\"umbgen and D.~Tyler.
\newblock On the breakdown properties of some multivariate {$M$}-functionals.
\newblock {\em Scandinavian Journal of Statistics}, 32:247--264, 2005.

\bibitem{frahm2004}
G.~Frahm.
\newblock {\em Generalized elliptical distributions: theory and applications}.
\newblock PhD Thesis, Universit\"at K\"oln, 2004.

\bibitem{GNL09}
D.~Gerogiannis, C.~Nikou, and A.~Likas.
\newblock The mixtures of {S}tudent’s $t$-distributions as a robust framework
  for rigid registration.
\newblock {\em Image and Vision Computing}, 27(9):1285--1294, 2009.

\bibitem{HBW2017}
D.~Hernandez-Stumpfhauser, F.~J. Breidt, and M.~J. van~der Woerd.
\newblock The general projected normal distribution of arbitrary dimension:
  Modeling and {B}ayesian inference.
\newblock {\em Bayesian Analysis}, 12(1):113 -- 133, 2017.

\bibitem{Kendall1974}
D.~G. Kendall.
\newblock Pole-seeking {B}rownian motion and bird navigation.
\newblock {\em Journal of the Royal Statistical Society}, 36:261--294, 1974.

\bibitem{KT88}
J.~T. Kent and D.~E. Tyler.
\newblock Maximum likelihood estimation for the wrapped {C}auchy distribution.
\newblock {\em Journal of Applied Statistics}, 15(2):247--254, 1988.

\bibitem{KT91}
J.~T. Kent and D.~E. Tyler.
\newblock Redescending {$M$}-estimates of multivariate location and scatter.
\newblock {\em The Annals of Statistics}, 19(4):2102--2119, 1991.

\bibitem{KTV94}
J.~T. Kent, D.~E. Tyler, and Y.~Vard.
\newblock A curious likelihood identity for the multivariate $t$-distribution.
\newblock {\em Communications in Statistics-Simulation and Computation},
  23(2):441--453, 1994.

\bibitem{KN04}
S.~Kotz and S.~Nadarajah.
\newblock {\em Multivariate $t$-Distributions and Their Applications}.
\newblock Cambridge University Press, 2004.

\bibitem{LLT89}
K.~L. Lange, R.~J. Little, and J.~M. Taylor.
\newblock Robust statistical modeling using the $t$ distribution.
\newblock {\em Journal of the American Statistical Association},
  84(408):881--896, 1989.

\bibitem{LMSS2018}
A.~Lanza, S.~Morigi, F.~Sciacchitano, and F.~Sgallari.
\newblock Whiteness constraints in a unified variational framework for image
  restoration.
\newblock {\em Journal of Mathematical Imaging and Vision}, 60(9):1503--1526,
  2018.

\bibitem{LNPS16}
F.~Laus, M.~Nikolova, J.~Persch, and G.~Steidl.
\newblock A nonlocal denoising algorithm for manifold-valued images using
  second order statistics.
\newblock {\em SIAM Journal on Imaging Sciences}, 10(1):416–448, March 2017.

\bibitem{LPS18}
F.~Laus, F.~Pierre, and G.~Steidl.
\newblock Nonlocal myriad filters for {C}auchy noise removal.
\newblock {\em Journal of Mathematical Imaging and Vision}, 60(8):1324--1354,
  2018.

\bibitem{LBM13}
M.~Lebrun, A.~Buades, and J.-M. Morel.
\newblock A nonlocal {B}ayesian image denoising algorithm.
\newblock {\em SIAM Journal on Imaging Sciences}, 6(3):1665--1688, 2013.

\bibitem{LCBM12}
M.~Lebrun, M.~Colom, A.~Buades, and J.~Morel.
\newblock Secrets of image denoising cuisine.
\newblock {\em Acta Numerica}, 21:475--576, 2012.

\bibitem{LR95}
C.~Liu and D.~B. Rubin.
\newblock {ML} estimation of the $t$ distribution using {EM} and its
  extensions, {ECM} and {ECME}.
\newblock {\em Statistica Sinica}, pages 19--39, 1995.

\bibitem{BC2017}
J.~M.~Borwein and R.~M.~Corless.
\newblock Gamma and factorial in the monthly.
\newblock {\em The American Mathematical Monthly}, 125, 2017.

\bibitem{Mardia1972}
K.~V. Mardia.
\newblock {\em Statistics of Directional Data}.
\newblock Academic Press, 1972.

\bibitem{MJ09}
K.~V. Mardia and P.~E. Jupp.
\newblock {\em Directional Statistics}, volume 494.
\newblock John Wiley \& Sons, 2009.

\bibitem{Mar76}
R.~A. Maronna.
\newblock Robust {$M$}-estimators of multivariate location and scatter.
\newblock {\em Annals in Statistics}, 4(1):51--67, 01 1976.

\bibitem{McLK1997}
G.~McLachlan and T.~Krishnan.
\newblock {\em The {EM} Algorithm and Extensions}.
\newblock John Wiley and Sons, Inc., 1997.

\bibitem{MDHY18}
J.-J. Mei, Y.~Dong, T.-Z. Huang, and W.~Yin.
\newblock {C}auchy noise removal by nonconvex {ADMM} with convergence
  guarantees.
\newblock {\em Journal of Scientific Computing}, 74(2):743--766, 2018.

\bibitem{MVD97}
X.-L. Meng and D.~Van~Dyk.
\newblock The {EM} algorithm - an old folk-song sung to a fast new tune.
\newblock {\em Journal of the Royal Statistical Society: Series B (Statistical
  Methodology)}, 59(3):511--567, 1997.

\bibitem{NK08}
S.~Nadarajah and S.~Kotz.
\newblock Estimation methods for the multivariate $t$-distribution.
\newblock {\em Acta Applicandae Mathematicae}, 102(1):99--118, 2008.

\bibitem{NW12}
T.~M. Nguyen and Q.~J. Wu.
\newblock Robust {S}tudent's-$t$ mixture model with spatial constraints and its
  application in medical image segmentation.
\newblock {\em IEEE Transactions on Medical Imaging}, 31(1):103--116, 2012.

\bibitem{PM00}
D.~Peel and G.~J. McLachlan.
\newblock Robust mixture modelling using the $t$ distribution.
\newblock {\em Statistics and Computing}, 10(4):339--348, 2000.

\bibitem{PP08}
K.~B. Petersen and M.~S. Pedersen.
\newblock {\em The matrix cookbook}.
\newblock Lecture Notes, Technical University of Denmark, 2008.

\bibitem{RPG97}
F.~Rocca, C.~Prati, and A.~M. Guarnieri.
\newblock {Possibilities and limits of {SAR} interferometry}.
\newblock {\em ESA SP}, pages 15--26, 1997.

\bibitem{SDB18}
A.~Saint-Dizier, J.~Delon, and C.~Bouveyron.
\newblock {A unified view on patch aggregation}.
\newblock hal-preprint hal-01865340.

\bibitem{SDZ15}
F.~Sciacchitano, Y.~Dong, and T.~Zeng.
\newblock Variational approach for restoring blurred images with {C}auchy
  noise.
\newblock {\em SIAM Journal on Imaging Sciences}, 8(3):1894--1922, 2015.

\bibitem{SST12}
S.~Setzer, G.~Steidl, and T.~Teuber.
\newblock On vector and matrix median computation.
\newblock {\em Journal of Computational and Applied Mathematics},
  236:2200--2222, 2012.

\bibitem{SNG07}
G.~Sfikas, C.~Nikou, and N.~Galatsanos.
\newblock Robust image segmentation with mixtures of {S}tudent's
  $t$-distributions.
\newblock In {\em 2007 IEEE International Conference on Image Processing},
  volume~1, pages I -- 273--I -- 276, 2007.

\bibitem{tyler1987a}
D.~E. Tyler.
\newblock A distribution-free {$M$}-estimator of multivariate scatter.
\newblock {\em Annals of Statistics}, 15:234--251.

\bibitem{tyler1987b}
D.~E. Tyler.
\newblock Statistical analysis for the angular central {G}aussian distribution
  on the sphere.
\newblock {\em Biometrika}, 74:579--589.

\bibitem{VS14}
A.~Van Den~Oord and B.~Schrauwen.
\newblock The {S}tudent-$t$ mixture as a natural image patch prior with
  application to image compression.
\newblock {\em Journal of Machine Learning Research}, 15(1):2061--2086, 2014.

\bibitem{vanDyk1995}
D.~A. van Dyk.
\newblock {\em Construction, implementation, and theory of algorithms based on
  data augmentation and model reduction}.
\newblock PhD Thesis, The University of Chicago, 1995.

\bibitem{WG2014}
F.~Wang and G.~A. E.
\newblock Modeling space and space-time directional data using projected
  {G}aussian processes.
\newblock {\em Journal of the American Statistical Association},
  109:1565--1580, 2014.

\bibitem{WG2013}
F.~Wang and A.~E. Gelfand.
\newblock Directional data analysis under the general projected normal
  distribution.
\newblock {\em Statistical Methodology}, 10:113 -- 127, 2013.

\bibitem{Watson1983}
G.~S. Watson.
\newblock {\em Statistics on Spheres}.
\newblock Wiley, 1983.

\bibitem{Wu83}
C.~J. Wu.
\newblock On the convergence properties of the {EM} algorithm.
\newblock {\em The Annals of Statistics}, 11(1):95--103, 1983.

\bibitem{YYG2018}
Z.~Yang, Z.~Yang, and G.~Gui.
\newblock A convex constraint variational method for restoring blurred images
  in the presence of alpha-stable noises.
\newblock {\em Sensors}, 18(4):1175, 2018.

\bibitem{ZZDZC14}
Z.~Zhou, J.~Zheng, Y.~Dai, Z.~Zhou, and S.~Chen.
\newblock Robust non-rigid point set registration using {S}tudent's-$t$ mixture
  model.
\newblock {\em PloS one}, 9(3):e91381, 2014.

\end{thebibliography}

\end{document}